\theoremstyle{theorem}
\newtheorem{theorem}{Theorem}[section]
\newtheorem{proposition}[theorem]{Proposition}
\newtheorem{lemma}[theorem]{Lemma}
\newtheorem{corollary}[theorem]{Corollary}
\theoremstyle{definition}
\newtheorem{example}[theorem]{Example}
\newtheorem{definition}[theorem]{Definition}
\newtheorem*{remark*}{Remark}
\newtheorem{remark}[theorem]{Remark}
\numberwithin{equation}{section}
\newcommand{\cA}{\mathcal{A}}
\renewcommand{\cH}{\mathcal{H}}
\newcommand{\R}{\mathbf{R}}
\newcommand{\N}{\mathbb{N}}
\newcommand{\Z}{\mathbb{Z}}
\DeclareMathOperator{\Ric}{Ric}
\DeclareMathOperator{\biRic}{biRic}
\DeclareMathOperator{\Lip}{Lip}
\DeclareMathOperator{\diam}{diam}
\newcommand{\eps}{\varepsilon}
\DeclareMathOperator{\Image}{image}
\DeclareMathOperator{\codim}{codim}
\DeclareMathOperator{\inj}{inj}
\title[Stable minimal hypersurfaces in 4-manifolds]{Complete stable minimal hypersurfaces in positively curved 4-manifolds}
\author{Otis Chodosh}
\address{Department of Mathematics, Stanford University, Building 380, Stanford, CA 94305, USA}
\email{ochodosh@stanford.edu}
\author{Chao Li}
\address{Courant Institute, New York University, 251 Mercer St, New York, NY 10012, USA}
\email{chaoli@nyu.edu}
\author{Douglas Stryker}
\address{Department of Mathematics, Princeton University, Fine Hall, 304 Washington Road, Princeton, NJ 08540, USA}
\email{dstryker@princeton.edu}
\begin{document}

\maketitle

\begin{abstract}
We show that the \emph{combination} of non-negative sectional curvature (or $2$-intermediate Ricci curvature) and strict positivity of scalar curvature forces rigidity of complete (non-compact) two-sided stable minimal hypersurfaces in a $4$-manifold with bounded curvature. 

Our work leads to new comparison results. We also construct various examples showing rigidity of stable minimal hypersurfaces can fail under other curvature conditions. 
\end{abstract}

\section{Introduction}

Recall that a \emph{two-sided stable minimal hypersurface} $M^{n-1}\to (X^{n},g)$ is an immersed hypersurface with vanishing mean curvature and trivial normal bundle satisfying the inequality
\[
\int_{M} (|A_{M}|^{2} + \Ric_{g}(\nu,\nu))\varphi^{2} \leq \int_{M} |\nabla \varphi|^{2}
\]
for any $\varphi \in C^{\infty}_{c}(M\setminus\partial M)$. Here, $A_{M}$ is the second fundamental form of the immersion, $\Ric_{g}$ is the ambient Ricci curvature, and $\nu$ is (any) choice of unit normal. Stable minimal hypersurfaces can be used in a similar manner to stable geodesics to probe the ambient geometry of $(X^{n},g)$. The basic results along these lines are as follows. When $M^{n-1}\to (X^{n},g)$ is a \emph{closed} (compact without boundary) two-sided stable minimal hypersurface, it holds that
\begin{enumerate}
\item If $\Ric_{g}\geq 0$ then $M$ is totally geodesic and $\Ric_{g}(\nu,\nu) \equiv 0$ along $M$ \cite{Simons} (cf.\ \cite{BT}). In particular, when $\Ric>0$, there are no closed two-sided stable minimal hypersurfaces. 
\item If $R_{g}\geq 1$ (scalar curvature) then $M$ also admits a metric of positive scalar curvature \cite{SY:3d-torus,SY:descent}. In particular, when $n=3$ and $X$ is oriented, each component of $M$ must be a $2$-sphere. 
\end{enumerate}
The second result is a fundamental tool in the study of manifolds of positive scalar curvature. 

When $M$ is now assumed to be complete with respect to the induced metric (in particular, $M$ has no boundary) and \emph{non-compact}, the theory becomes considerably more complicated. As the following results show, it has been well-developed in $3$-dimensions. Consider $M^{2}\to (X^{3},g)$ complete two-sided stable minimal immersion:
\begin{enumerate}
\item  When $R_{g}\geq 0$, the induced metric on $M$ is conformal to either the plane or the cylinder \cite{fischer-colbrie-schoen}. In the latter case, $M$ is totally geodesic, intrinsically flat, and $R_{g}\equiv 0,\Ric_{g}(\nu,\nu) \equiv 0$ along $M$ (cf.\ \cite[Proposition C.1]{CCE}). 
\item When $\Ric_{g}\geq 0$, $M$ is totally geodesic, intrinsically flat, and $\Ric_{g}(\nu,\nu) \equiv 0$ along $M$ \cite{SY:ric} (cf.\ \cite{fischer-colbrie-schoen,docarmo-peng,pogorelov}).
\item When $R_{g}\geq 1$, $M$ must be compact \cite{SY:condensation} (cf.\ \cite{GL:complete}). 
\end{enumerate}
These results have had important applications to comparison geometry. In particular, we refer to the following (incomplete) list of results that rely specifically on (non-compact) stable minimal surfaces in $3$-manifolds: \cite{SY:ric,GL:complete,AR,Liu,Wang:whitehead}. For most of these applications, it is essential that no assumption is made related to properness or volume growth of $M$.


The first- and second- named authors have recently resolved \cite{CLstable} the conjecture of Schoen that a complete two-sided stable minimal immersion $M^{3}\to\mathbf{R}^{4}$ is flat (without any additional assumptions on $M$); see also the subsequent proofs in \cite{ChodoshLi:aniso,CatinoMastroliaRoncoroni}. It is natural to ask what happens in a curved ambient background, rather than flat Euclidean space. For example, a basic question is to determine which ambient curvature conditions suffice to show that a complete stable minimal hypersurface must be compact, or cannot exist. The methods used in \cite{CLstable} do not seem to extend to the curved setting.\footnote{In particular, the use of the $L^3$ Schoen--Simon--Yau inequality \cite{SSY} seems troublesome in most curved ambient manifolds.} (See \cite{SSY,ShenYe,CaoShenZhu,LiWang:ends.stable.minimal} for previous results in this direction.)

In fact, (as was discussed above) in a $3$-manifold, $R_{g}\geq 1$ implies compactness and $\Ric_{g}>0$ implies non-existence; on the other hand, in $4$-dimensions, neither of these results can hold:
\begin{example}[Non-compact stable minimal hypersurface in $4$-manifold with $R_{g}\geq 1$]\label{exam:psc}
Fix an oriented closed $2$-dimensional Riemannian manifold $(X^{2},g)$ admitting 
\[
\sigma : \mathbf{R} \to (X^{2},g)
\]
unit speed stable geodesic. Then, for $\eps>0$ sufficiently small $(X^{2},g) \times \mathbf{S}^{2}(\eps)$ has $R_{g}\geq 1$, while $\sigma \times \mathbf{S}^{2}(\eps)$ is an unbounded two-sided stable minimal immersion. 
\end{example}
\begin{example}[Stable minimal hypersurface in a $4$-manifold with positive sectional curvature]\label{exam:hardy}
Consider a rotationally symmetric metric $g$ on $\mathbf{R}^{4}$ with strictly positive sectional curvatures. There are totally geodesic copies of $\mathbf{R}^{3}$ in such a metric, and as long as the Ricci curvature of $g$ decays to zero sufficiently fast, these will be (complete) two-sided stable minimal embeddings. Essentially, the point is that on flat Euclidean $3$-space $(\mathbf{R}^{3},\bar g)$, any Schr{\"o}dinger operator with non-negative, rapidly decaying potential\footnote{Note that a Schr{\"o}dinger operator with non-negative (but not identically zero) potential cannot be stable on $\mathbf{R}^{2}$, thanks to the log-cutoff trick.} will be stable, thanks to a classical Hardy inequality
\[
\int_{\mathbf{R}^{3}} 4|\bar\nabla \varphi|^{2}d\bar\mu \geq \int_{\mathbf{R}^{3}} r^{-2} \varphi^{2}d\bar\mu. 
\]
We explicitly construct such an example and check its stability in Section \ref{sec:pos.K.cx}.
\end{example}

\subsection{Main results}  It turns out that if one \emph{combines} positivity of curvature with strict positivity of scalar curvature, such examples can be avoided. This is the main result of this article. 

\begin{theorem}\label{thm:main.sec}
If $(X^{4},g)$ is a complete $4$-manifold with weakly bounded geometry, non-negative sectional curvature, and strictly positive scalar curvature $R_g\geq R_{0}>0$, then any complete two-sided stable minimal immersion $M^{3}\to(X^{4},g)$ is totally geodesic and has $\Ric_{g}(\nu,\nu) \equiv 0$ along $M$. 
\end{theorem}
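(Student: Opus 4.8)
The plan is to use the Gauss equation to turn stability into positivity of a weighted scalar-curvature operator on $M$ with a quantitative gap controlled by $R_0$, and then to extract rigidity by a $\mu$-bubble slicing argument in the spirit of Gromov, Schoen--Yau and \cite{CLstable}.

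Since $M$ is minimal, the traced Gauss equation reads $2\Ric_g(\nu,\nu)=R_g-R_M-|A_M|^2$, where $R_M$ is the scalar curvature of the induced metric, so the stability potential is $|A_M|^2+\Ric_g(\nu,\nu)=\tfrac12|A_M|^2+\tfrac12R_g-\tfrac12R_M$. Inserting this into the stability inequality and rearranging, one gets, for every $\varphi\in C_c^\infty(M)$,
\[
\int_M|\nabla\varphi|^2+\tfrac12\int_MR_M\,\varphi^2\ \ge\ \tfrac12\int_M\bigl(|A_M|^2+R_g\bigr)\varphi^2\ \ge\ \tfrac12R_0\int_M\varphi^2+\tfrac12\int_M|A_M|^2\varphi^2 ,
\]
i.e.\ $-\Delta_M+\tfrac12R_M\ge\tfrac12(|A_M|^2+R_0)>0$ as quadratic forms. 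Non-negative sectional curvature makes $\Ric_g(\nu,\nu)$ a sum of three ambient sectional curvatures, hence $\ge0$ pointwise, so the conclusion of the theorem is exactly the assertion that the non-negative function $|A_M|^2+\Ric_g(\nu,\nu)$ vanishes identically. When $M$ is closed this follows at once by testing with $\varphi\equiv1$ (Simons' argument), so the content is the non-compact case.

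In the non-compact case, write the operator bound via a positive function $u$ on $M$ with $-\Delta_Mu+\tfrac12R_Mu=\tfrac12(|A_M|^2+R_g)u$. Inside a compact band of $M$ --- a region bounded by two hypersurfaces --- minimize the $u$-weighted area minus a prescribed-mean-curvature term blowing up on the two ends of the band; the minimizer $\Sigma^2\subset M$ is a closed surface, stable for the weighted functional, and the usual manipulation of its second variation (substitute $u^{1/2}$, integrate by parts, use the equation for $u$) descends the ambient bound to an inequality of the shape
\[
\int_\Sigma|\nabla\psi|^2+\int_\Sigma K_\Sigma\,\psi^2\ \ge\ \int_\Sigma\Bigl(\tfrac12R_0+(\text{non-negative terms in the prescribing function},\,|A_\Sigma|,\,|A_M|)\Bigr)\psi^2 .
\]
Testing with $\psi\equiv1$ and using Gauss--Bonnet forces each component of $\Sigma$ to be a $2$-sphere of controlled area, and, more importantly, produces a universal upper bound in terms of $R_0$ on the width of any band in $M$ whose cross-section is not already spherical. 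Since a complete non-compact $M^3$ of bounded geometry has an end, one then splits into cases: if some end is ``non-spherical'' it contains arbitrarily wide such bands, contradicting the width bound; and if the ends are spherical the same estimates, pushed to their equality case and combined with a logarithmic-cutoff argument on the leaves of the $\mu$-bubble foliation (in the spirit of the footnote to Example~\ref{exam:hardy}), force the defect $|A_M|^2+\Ric_g(\nu,\nu)$ to vanish first on each leaf and then, sweeping the slicing across $M$, everywhere.

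The weakly bounded geometry hypothesis enters throughout this last part: combined with the interior curvature theory for stable minimal hypersurfaces, it bounds $|A_M|$ and the injectivity radius of $M$ on compact sets, so that --- even though the immersion need not be proper --- the $\mu$-bubbles exist, are regular, and have convergent subsequences. This is the ingredient replacing the scaling/blow-down and the $L^3$ Schoen--Simon--Yau estimate of the Euclidean argument in \cite{CLstable}, and I expect the main difficulty of the proof to lie precisely here: arranging the $\mu$-bubble regions so as to exhaust the ends of a non-properly-immersed $M$ in a geometrically controlled fashion, and upgrading the resulting strict inequalities to the sharp rigidity statement via an equality/unique-continuation analysis.
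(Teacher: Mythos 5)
Your opening reductions are correct and match the paper: the Schoen--Yau rearrangement of stability into $-\Delta_M+\tfrac12 R_M\ge\tfrac12(|A_M|^2+R_g)$ is exactly the inequality fed into the warped $\mu$-bubble machinery, non-negative sectional curvature does make the conclusion equivalent to the vanishing of the non-negative potential $|A_M|^2+\Ric_g(\nu,\nu)$, and weakly bounded geometry plus the Bernstein theorem of \cite{CLstable} is indeed what supplies the a priori bound on $|A_M|$. The $\mu$-bubble step also gives what you expect in one respect: in any band of width $L$ one finds a separating slice each of whose components has diameter at most a universal constant.

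The endgame, however, has two genuine gaps. First, the mechanism you propose for concluding --- a dichotomy between ``spherical'' and ``non-spherical'' ends, a log-cutoff on the two-dimensional leaves, and an equality-case/unique-continuation analysis --- is not viable. The $\mu$-bubble lemma does not produce a contradiction from ``wide bands''; it produces small-diameter slices in \emph{every} band, and the actual route to rigidity is to build compactly supported $\varphi_i\to1$ with $\int_M|\nabla\varphi_i|^2\to0$ and plug them into stability. To do that you must upgrade the \emph{diameter} control of the slices to \emph{volume} control of the annular regions between consecutive slices; this requires a lower bound $\Ric_M\ge-|A_M|^2$ (Gauss equation plus $\sec_g\ge0$, or $\Ric_2^g\ge0$) combined with the $|A_M|$ bound and Bishop--Gromov. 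A logarithmic cutoff supported on $2$-dimensional leaves cannot yield a test function on the $3$-manifold $M$, and no unique-continuation step is needed once the energy of the cutoffs vanishes. Second, and more fundamentally, you have not addressed the global structure of the ends: $M$ (after passing to the universal cover) may have infinitely many ends, and the $\mu$-bubble/volume control only lives along a single ``core tube.'' One needs the separate theorem that $\sec_g\ge0$ (in fact $\Ric_2^g\ge0$) forces $M$ to have at most one nonparabolic end --- proved via the Schoen--Yau inequality for harmonic functions, finite-energy harmonic functions from two nonparabolic ends, and Cheeger--Gromoll splitting --- and then one must graft harmonic test functions with small Dirichlet energy onto all the parabolic pieces hanging off the core tube. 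Without this decomposition the cutoff function cannot be globalized, and this is the second (and independent) place where the curvature hypothesis beyond $\Ric_g\ge0$ is essential, not merely the pointwise positivity of $\Ric_g(\nu,\nu)$ you cite.
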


Here, weakly bounded geometry means that around any point, there is a \emph{local} diffeomorphism from a ball in $\R^{4}$ of definite size so that the pullback metric is $C^{1,\alpha}$-close to Euclidean (see Definition \ref{defi:bound.geo}). (Note that this allows for collapsing behavior at infinity.)
\begin{remark}
All results in this paper assume that $(X,g)$ at least has $\Ric_g\geq 0$, in which case ``weakly bounded geometry'' could be replaced by the assumption ``$\sec_g\leq K$'' anywhere it occurs here. It would be interesting to understand if this condition could be removed. 
\end{remark}

\begin{remark}
Shen--Ye \cite{ShenYe} proved that for $n\leq 4$, if $(X^{n+1},g)$ satisfies
\begin{equation}\label{eq.shenye.curvature}
	\biRic \ge k>0,
\end{equation}
then any complete two-sided stable minimal immersion $M^n \to (X^{n+1},g)$ is compact. Here $\biRic$ is called the bi-Ricci curvature, defined in \cite{ShenYe} ($\biRic\ge k$ implies that $R_g\ge c_n k$ for some dimensional constant $c_n$). (See Appendix \ref{sec:curv-cond} for precise definitions of these curvature conditions as well as Section \ref{sec:drop-sectional} below.) 

In particular, it follows that such a minimal immersion does not exist if $(X^{n+1},g)$ is assumed to be closed with positive sectional curvature. After our paper appeared, the Shen--Ye result was rediscovered and extended to $n=5$ by Catino--Mastrolia--Roncoroni \cite{CatinoMastroliaRoncoroni}, under the additional assumption of nonnegative sectional curvature (or even just nonnegative 4-intermediate Ricci curvature). The approach in \cite{ShenYe,CatinoMastroliaRoncoroni} is very different from that of this paper and is based on a conformal deformation technique first introduced by Fischer-Colbrie \cite{FC}. An interesting feature of our paper (see Theorem \ref{thm:main}) is that we are able to replace (when $n=3$) the condition $\biRic\ge k>0$ by the weaker assumption on scalar curvature $R_g\ge R_0>0$ in \eqref{eq.shenye.curvature}. We note that the condition considered in this paper allows for the existence of complete, \textit{non-compact} stable minimal immersions (we prove they must be totally geodesic and have vanishing normal Ricci curvature). For example, one may consider $S^2\times \mathbf{R} \to S^2 \times \mathbf{R} \times S^1$. 
\end{remark}

Of course, any closed manifold $(X,g)$ has weakly bounded geometry, so we have the following result:

\begin{corollary}\label{cor:cpt.sec}
If $(X^{4},g)$ is a closed $4$-manifold with non-negative sectional curvature and positive scalar curvature, then a complete two-sided stable minimal immersion $M^{3}\to(X^{4},g)$ is totally geodesic and $\Ric_g(\nu,\nu) \equiv 0$ along $M$. 
\end{corollary}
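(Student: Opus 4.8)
The plan is to derive Corollary~\ref{cor:cpt.sec} directly from Theorem~\ref{thm:main.sec}. The only thing to check is that a closed Riemannian manifold $(X,g)$ automatically satisfies the hypotheses of the theorem, after which the conclusion is verbatim that of Theorem~\ref{thm:main.sec}. So the ``proof'' is really a verification of three points.

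First I would note that a closed manifold is complete, so that hypothesis is immediate. Second, positivity of the scalar curvature: since $X$ is compact and $R_g$ is continuous, $R_g$ attains a minimum $R_0 := \min_X R_g$, and the assumption $R_g > 0$ forces $R_0 > 0$; thus $R_g \geq R_0 > 0$ as required. Third, and least trivially, I must check that $(X,g)$ has weakly bounded geometry in the sense of Definition~\ref{defi:bound.geo}: around any point $p \in X$ there is a local diffeomorphism from a ball in $\mathbf{R}^4$ of definite (i.e.\ $p$-independent) size whose pullback metric is $C^{1,\alpha}$-close to Euclidean. The natural construction is to use harmonic coordinates: since $X$ is closed, the injectivity radius, the sectional curvature and all derivatives of $g$ are bounded uniformly, so standard elliptic estimates (e.g.\ Jost--Karcher, or DeTurck--Kazdan) produce harmonic coordinate charts on balls of a uniform radius $r_0 > 0$ in which $g$ is $C^{1,\alpha}$-controlled; rescaling these charts gives the required local diffeomorphisms from a fixed-size Euclidean ball with pullback metric as $C^{1,\alpha}$-close to the flat one as we wish. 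Alternatively, one can simply quote that compactness trivially implies any reasonable bounded-geometry condition, as the excerpt itself remarks (``any closed manifold $(X,g)$ has weakly bounded geometry'').

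With all three hypotheses of Theorem~\ref{thm:main.sec} in place, that theorem applies and yields exactly that every complete two-sided stable minimal immersion $M^3 \to (X^4,g)$ is totally geodesic with $\Ric_g(\nu,\nu) \equiv 0$ along $M$, which is the assertion of Corollary~\ref{cor:cpt.sec}. I do not expect any genuine obstacle here: the statement is a specialization, and the only mildly technical point—weakly bounded geometry for closed manifolds—is a routine consequence of compactness plus harmonic-coordinate estimates. If one wanted to be maximally self-contained, the one place to spell out carefully is that the ``definite size'' of the chart can be taken independent of the base point, which is where compactness of $X$ (rather than mere completeness) is used.

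\begin{proof}[Proof of Corollary~\ref{cor:cpt.sec}]
Since $(X^4,g)$ is closed, it is complete, and $R_g$ attains a positive minimum $R_0>0$, so $R_g\geq R_0>0$. Moreover, any closed Riemannian manifold has weakly bounded geometry in the sense of Definition~\ref{defi:bound.geo}: by compactness the injectivity radius is bounded below and the curvature and its derivatives are bounded, so harmonic coordinate charts exist on geodesic balls of a uniform radius in which $g$ is $C^{1,\alpha}$-close to Euclidean; rescaling gives local diffeomorphisms of the required form. Hence Theorem~\ref{thm:main.sec} applies and gives the conclusion.
\end{proof}
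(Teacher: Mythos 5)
Your proposal is correct and matches the paper's reasoning exactly: the corollary is stated as an immediate specialization of Theorem \ref{thm:main.sec}, with the only point being that a closed manifold is complete, has $R_g \geq R_0 > 0$ by compactness, and has weakly bounded geometry (which the paper also justifies via bounded sectional curvature in Proposition \ref{prop:bd.geo}, an alternative to your harmonic-coordinates route). No gaps.
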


This applies (for example) to the standard product metrics on $S^4$, $S^1 \times S^3$, $S^2\times S^2$, and $T^2 \times S^2$. We emphasize that the latter example does not have positive bi-Ricci curvature, and is thus not covered by the results in \cite{ShenYe,CatinoMastroliaRoncoroni}. 

\begin{remark}
The study of complete (non-compact) stable minimal immersions in a \emph{compact} $3$-manifold has played an important role in the study of embedded minimal surfaces with bounded Morse index \cite{CKM,Carlotto:bd.morse,LiZhou}. One may hope that the Corollary \ref{cor:cpt.sec} could lead to similar results in closed $4$-manifolds. 
\end{remark}

\begin{remark}
Theorem \ref{thm:main.sec} fails in $6$-dimensional manifolds (and higher). See Appendix \ref{sec:six-dim} for a counterexample. Gromov's conjecture that a $4$-manifold with uniformly positive scalar curvature has macroscopic dimension $2$ suggests that the $5$-dimensional version of Theorem \ref{thm:main.sec} would be the ``critical'' dimension (this is related to the log-cutoff trick for minimal surfaces in $3$-manifolds). 
\end{remark}

\subsection{Concerning the assumption of non-negative sectional curvature}\label{sec:drop-sectional} It is natural to ask whether or not there exist complete two-sided stable minimal immersions $M^{3}\to (X^{4},g)$ where $(X^{4},g)$ has weakly bounded geometry, $\Ric_{g}\geq 0$ and $R_{g}\geq 1$. (Note that in Theorem \ref{thm:main.sec}, $\Ric_{g}\geq 0$ is replaced by non-negative sectional curvature.) 

To this end, we have the following example indicating that this is unlikely to be true. 

\begin{example}[Stable minimal hypersurface in a $4$-manifold with strictly positive Ricci curvature]\label{exam:Ricci}
There exists a closed 4-manifold $(X^{4},g)$ (so it automatically has bounded geometry) and a complete two-sided stable minimal immersion $M^{3} \to (X^{4},g)$ so that $\Ric_{g}\geq 1$ (so, in particular $R_{g}\geq 1$) in some $\delta$-neighborhood of the image of $M$. 

This example is constructed in detail in Section \ref{sec:pos.norm.Ric}, but we briefly describe it here. The minimal immersion $M^{3} \to (X^{4},g)$ is constructed by taking the universal cover of some embedded compact\footnote{Note that $M_{0}$ is necessarily unstable, since $\Ric_{g}(\nu,\nu) > 0$; the instability occurs due to the ``largeness'' of the universal cover.} minimal submanfold $M_{0}\subset (X^{4},g)$. We choose $M_{0}$ so that it is diffeomorphic to $(S^{1}\times S^{2}) \# (S^{1}\times S^{2})$, so $\pi_{1}(M_{0}) = F_{2}$ is a ``non-amenable'' group. This implies that the universal cover has positive first eigenvalue. As such, as long as $|A_{M}|^{2} + \Ric_{g}(\nu,\nu)$ is sufficiently small, the universal cover will be stable. 

We note that it is unclear if one can construct such an example where $(X^{4},g)$ is complete and satisfies $\Ric_{g}>0$ and $R_{g}\geq 1$ at all points (not just near the image of $M$). (We discuss the possibility of doing so further in Section \ref{sec:pos.norm.Ric}.) However, we emphasize that this example precludes any argument concerning stable minimal immersions in such $(X^{4},g)$ that is based purely on the second variation of area.\footnote{Note that Li--Wang have used the Busemann function in their analysis of \emph{proper} ends of stable minimal hypersurfaces in non-negatively curved manifolds \cite{LiWang:ends.stable.minimal}. As such, this argument relies on the ambient geometry outside of just a tubular neighborhood of the immersion. However, it is unclear how to extend such a argument to non-negative Ricci curvature (the Busemann function is subharmonic rather than convex so it is difficult to control the restriction to a minimal hypersurface). Along these lines, we note that the stable minimal immersion $M^3\to (X^{4},g)$ discussed in Example \ref{exam:Ricci} has infinitely many ends. }
\end{example}

On the other hand, it is possible to weaken non-negativity of sectional curvature to an intermediate condition. Namely, we say that $(X^{4},g)$ has non-negative $2$-intermediate Ricci curvature if the sectional curvatures satisfy $\text{sec}_g(\Pi_{1}) + \text{sec}_g(\Pi_{2})\geq 0$ for any two $2$-planes $\Pi_{1},\Pi_{2}\subset T_{p}X$ intersecting in a line at a right angle (cf.\ Definition \ref{defi:2-int-Ric}.) We will write $\Ric_{2}^g\geq 0$ for this condition. (Note that non-negative sectional curvature implies $\Ric_{2}^g\geq 0$ which implies $\Ric_g\geq 0$.) The results discussed above all hold with non-negative sectional curvature replaced by $\Ric_{2}^g\geq 0$. More precisely, we have

\begin{theorem}\label{thm:main}
If $(X^4, g)$ is a complete 4-manifold with weakly bounded geometry, $\Ric_{2}^g\geq 0$, and $R_g \geq R_{0}>0$, then any complete two-sided stable minimal immersion $M^3 \to (X^4, g)$ is totally geodesic and has $\mathrm{Ric}_g(\nu, \nu) \equiv 0$ along $M$.
\end{theorem}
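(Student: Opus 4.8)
The conclusion is equivalent to the assertion that $q := |A_M|^2 + \Ric_g(\nu,\nu)$ vanishes identically on $M$. Indeed, $|A_M|^2\ge 0$, and summing the inequalities $\text{sec}_g(\nu\wedge e_i)+\text{sec}_g(\nu\wedge e_j)\ge 0$ (valid since $\Ric_2^g\ge 0$ and the planes $\nu\wedge e_i$, $\nu\wedge e_j$ meet at a right angle in the line through $\nu$) over the pairs coming from an orthonormal frame $e_1,e_2,e_3$ of $TM$ gives $\Ric_g(\nu,\nu)\ge 0$ along $M$; hence $q\equiv 0$ forces both $|A_M|\equiv 0$ and $\Ric_g(\nu,\nu)\equiv 0$. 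If $M$ is compact (hence closed) this follows at once from Simons' rigidity for closed two-sided stable minimal hypersurfaces (result (1) in the closed case above), applied with the weaker hypothesis $\Ric_g\ge 0$. So the plan is to treat the non-compact case and show $q\equiv 0$ there as well.

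First I would feed the Gauss equation into the stability inequality. Minimality gives $R_M = R_g - 2\Ric_g(\nu,\nu) - |A_M|^2$, hence $|A_M|^2 + \Ric_g(\nu,\nu) = \tfrac12(|A_M|^2 + R_g - R_M)$, and the stability inequality becomes
\[
\int_M (|A_M|^2 + R_g)\,\varphi^2 \;\le\; \int_M \big(2|\nabla\varphi|^2 + R_M\,\varphi^2\big),\qquad \varphi\in C^\infty_c(M).
\]
Since $R_g\ge R_0$, the Schr\"odinger operator $-2\Delta_M + R_M$ is positive with a definite gap: $\int_M(2|\nabla\varphi|^2+R_M\varphi^2)\ge R_0\int_M\varphi^2+\int_M|A_M|^2\varphi^2$. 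A Fischer--Colbrie--Schoen argument on the complete manifold $M$ then produces $u>0$ with $-\Delta_M u = q u$, and a direct computation shows that the warped product $N^4:=\big(M^3\times\mathbf R,\; g_M+u^2\,dt^2\big)$ has scalar curvature $R_N = R_M+2q = R_g+|A_M|^2\ge R_0$; equivalently the conformal metric $u^2 g_M$ on $M^3$ has scalar curvature $\ge R_0 u^{-2}>0$. Thus stability together with $R_g\ge R_0$ is repackaged as a uniformly-positive-scalar-curvature structure intrinsic to $M$.

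The heart of the argument is then a dimension reduction for this structure that uses the ambient condition $\Ric_2^g\ge 0$ — which, as Example \ref{exam:Ricci} indicates, is only available (and only needed) near $M$. I would (i) combine Simons' identity for the stable minimal immersion $M^3\to X^4$, whose curvature terms have a favorable sign precisely under $\Ric_2^g\ge 0$, with a Kato-type inequality to obtain a useful differential inequality for $|A_M|$; and (ii) run a warped $\mu$-bubble argument showing $M^3$ is macroscopically $2$-dimensional. Concretely: if $M$ failed to be macroscopically $2$-dimensional then for arbitrarily large $L$ one could build a $\mu$-bubble surface $\Sigma^2\subset M^3$ separating regions at distance $\sim L$, whose weighted second variation, combined with the positivity of $-2\Delta_M+R_M$, forces $\Sigma$ to carry a conformal metric of Gauss curvature $\ge cR_0>0$; by Gauss--Bonnet each component of $\Sigma$ is then a $2$-sphere of $\diam\lesssim R_0^{-1/2}$, contradicting that $\Sigma$ must reach across distance $\sim L$. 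With $M$ now effectively $2$-dimensional (quadratic area growth along an exhaustion), a logarithmic cutoff applied to $\int_M q\,\varphi^2\le\int_M|\nabla\varphi|^2$ — the step which genuinely fails in dimension $3$ but is restored by the effective $2$-dimensionality, cf.\ the Hardy inequality of Example \ref{exam:hardy} — forces $q\equiv 0$, and tracking the equality cases identifies the surviving configurations as exactly the totally geodesic ones with $\Ric_g(\nu,\nu)\equiv 0$.

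The main obstacle I expect is precisely this dimension reduction: a uniform positive-scalar-curvature-type bound on a $3$-manifold does not by itself imply compactness or rigidity (for instance $S^2\times\mathbf R\hookrightarrow S^2\times\mathbf R\times S^1$ is a genuine non-compact example), so one must extract, quantitatively and using only $\Ric_2^g\ge 0$ near $M$, enough geometric control on $M$ to force macroscopic dimension $2$, and then verify that all the intervening inequalities saturate exactly on the rigid configurations. Secondary technical difficulties are the completeness of the conformal and warped metrics when $u$ degenerates at infinity, and the existence theory for $\mu$-bubbles under only weakly bounded geometry (which permits collapse) in place of a two-sided curvature bound.
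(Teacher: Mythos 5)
Your reduction to $q := |A_M|^2 + \Ric_g(\nu,\nu) \equiv 0$, the Schoen--Yau rearrangement of stability via the traced Gauss equation, the Fischer--Colbrie--Schoen function $u$, and the use of warped $\mu$-bubbles to produce diameter-controlled separating surfaces are all in line with the paper. The genuine gaps occur at the step where you pass from ``the $\mu$-bubble surfaces have bounded diameter'' to ``quadratic area growth along an exhaustion, hence a log-cutoff works.''

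First, a diameter bound on separating surfaces gives no volume control without a lower bound on $\Ric_M$. This is precisely where the weakly bounded geometry hypothesis enters, and your proposal never actually uses it: by the curvature estimate for stable minimal hypersurfaces in $4$-manifolds with weakly bounded geometry (Lemma \ref{lem:boundedgeometry}, a blow-up consequence of the flatness theorem of \cite{CLstable}), $|A_M|$ is uniformly bounded, and then Lemma \ref{lem:k2+} (Gauss equation plus $\Ric_2^g\ge 0$) gives $\Ric_M\ge -|A_M|^2\ge -C$, so Bishop--Gromov converts the diameter bound on an annular region into a volume bound. Second, and more seriously, even with this one only controls the volume of annuli along a \emph{single} end: $M$ may have infinitely many ends and branch like a tree, which is macroscopically one-dimensional yet has exponential volume growth, so no polynomial volume growth --- and hence no log-cutoff --- is available globally. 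The paper resolves this by first proving (Theorem \ref{thm:onenonparabolic}) that $\Ric_2^g\ge 0$ forces $M$ to have at most one \emph{nonparabolic} end, via the Schoen--Yau inequality for harmonic functions, the refined Kato inequality, and Cheeger--Gromoll splitting; this is a substantial use of the $\Ric_2^g\ge 0$ hypothesis that is entirely absent from your outline. The test function is then built piecewise: a linear cutoff along the diameter- and volume-controlled core tube of the unique nonparabolic end (which has linear, not quadratic, growth), glued to the almost-energy-free cutoffs that parabolic components admit by the definition of parabolicity. Without this end/parabolicity analysis your exhaustion argument does not close.
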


We discuss the topological implications of this result in Section \ref{sec:topo}.

We remark that $\Ric_g\geq 0$ (instead of $\Ric_{2}^g\geq 0$) suffices for most (but not all) steps of the proof of Theorem \ref{thm:main}, so if one places additional topological assumptions on the minimal hypersurface, then the proof of Theorem \ref{thm:main} can be adapted in a straightforward way to show the following result. 

\begin{theorem}
Let $(X^{4},g)$ be a complete $4$-manifold with weakly bounded geometry, $\Ric_{2}^g\geq -K$, $\Ric_g\geq 0$, and $R_g\geq R_{0}>0$. Let $M^{3}\to(X^{4},g)$ be a complete two-sided stable minimal immersion with finitely many ends and $b_{1}(M) < \infty$. Then $M$ is totally geodesic and has $\Ric_{g}(\nu,\nu) \equiv 0$ along $M$.
\end{theorem}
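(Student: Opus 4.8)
The plan is to follow the proof of Theorem~\ref{thm:main} line by line, keeping track of exactly which steps use $\Ric_2^g\geq 0$ rather than only $\Ric_g\geq 0$, and replacing those steps either by an argument that tolerates the two-sided bound $\Ric_2^g\geq -K$ or by invoking the new topological hypotheses. The common starting point is the following consequence of stability and the traced Gauss equation $\Ric_g(\nu,\nu)=\tfrac12\big(R_g-R_M-|A_M|^2\big)$ (valid since $H_M=0$): substituting into the stability inequality and rearranging gives, for every $\varphi\in C^\infty_c(M)$,
\[
\int_M\big(8|\nabla\varphi|^2+R_M\varphi^2\big)\,d\mu_M\;\geq\;\int_M\big(6|\nabla\varphi|^2+|A_M|^2\varphi^2+R_0\varphi^2\big)\,d\mu_M ,
\]
so the conformal Laplacian $L_M=-8\Delta_M+R_M$ of the induced metric on $M^3$ is uniformly positive. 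This uses only $R_g\geq R_0$ and stability (hence implicitly $\Ric_g\geq 0$), and it is the engine of a warped $\mu$-bubble dimension reduction inside $M^3$, producing surfaces $\Sigma^2\subset M^3$ with controlled (spherical) topology and, in turn, uniform control on the macroscopic geometry of $M$; together with structural information about $M$ this yields $A_M\equiv 0$ and $\Ric_g(\nu,\nu)\equiv 0$.

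I would expect $\Ric_2^g\geq 0$ to be used in exactly two ways beyond $\Ric_g\geq 0$. First, in bounding the topology of $M$ --- concretely, in showing that $M$ has finitely many ends and $b_1(M)<\infty$ --- via an argument combining the stability inequality with a Bochner-type or second-variation computation along lines, or along harmonic $1$-forms, in $M$; the ambient curvature contribution there is precisely a sum $\text{sec}_g(\Pi_1)+\text{sec}_g(\Pi_2)$ of sectional curvatures on orthogonal planes, i.e.\ a $2$-intermediate Ricci term. (That this step genuinely fails under $\Ric_g\geq 0$ alone is shown by Example~\ref{exam:Ricci}, whose minimal immersion has infinitely many ends.) In the present theorem this step is simply dropped, since ``finitely many ends'' and ``$b_1(M)<\infty$'' are now hypotheses. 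Second, in the $\mu$-bubble step itself: after applying the Gauss equations for $\Sigma\subset M\subset X$, the second variation of the relevant weighted-area functional for $\Sigma^2$ acquires curvature terms of $X$ evaluated on pairs of orthogonal $2$-planes. Under $\Ric_2^g\geq 0$ these are nonnegative, while under $\Ric_2^g\geq -K$ they are bounded below by $-CK$. To absorb this error I would localize: run the $\mu$-bubble construction only on balls of a fixed radius $\rho=\rho(R_0,K)>0$ chosen so that the $-CK$ term is dominated by the $R_0$-gap in the displayed inequality; the weakly bounded geometry hypothesis (equivalently $\text{sec}_g\leq K$, given $\Ric_g\geq 0$) supplies the existence, regularity and uniform estimates for $\mu$-bubbles at this scale.

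The remaining ingredients --- the width/macroscopic estimate for a $3$-manifold with uniformly positive conformal Laplacian, the passage from that estimate plus finiteness of ends to a splitting or to compactness of $M$, and the final extraction of $A_M\equiv 0$ and $\Ric_g(\nu,\nu)\equiv 0$ from this structure by testing the stability inequality with suitable cutoffs --- use only $\Ric_g\geq 0$ and the weakly bounded geometry, so they transfer unchanged. The main obstacle is the localization in the second point above: one must verify that restricting the $\mu$-bubble construction to scale $\rho(R_0,K)$ still yields enough geometric control of $M$ to run the splitting argument, given only finitely many ends and $b_1(M)<\infty$; globally the $-CK$ errors are fatal, which is precisely why these two topological finiteness conditions --- automatic under $\Ric_2^g\geq 0$ --- must now be assumed.
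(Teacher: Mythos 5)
Your top-level strategy --- rerun the proof of Theorem \ref{thm:main} and let the new topological hypotheses stand in for the steps that genuinely need $\Ric_2^g\geq 0$ --- is the intended one, but you have misidentified where $\Ric_2^g\geq 0$ actually enters, so the substantive part of your argument addresses the wrong steps. First, the warped $\mu$-bubble lemma (Lemma \ref{lem:mububble}) uses only $R_g\geq 1$ together with the rearranged stability inequality $\int_N |\nabla\psi|^2-\tfrac12(R_g-R_N+|A_N|^2)\psi^2\geq 0$; its second variation involves only intrinsic quantities of $N$ and $\Sigma_0$ (through $R_N$ and $K_{\Sigma_0}$) and no ambient curvature evaluated on pairs of orthogonal planes. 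There is therefore no $-CK$ error to absorb there, and the ``localization at scale $\rho(R_0,K)$'' that you single out as the main obstacle is a fix for a problem that does not arise. Second, $\Ric_2^g\geq 0$ is never used to prove that $M$ has finitely many ends or that $b_1(M)<\infty$ --- neither fact is established in the proof of Theorem \ref{thm:main} (the decomposition in Section \ref{subsec:decomp} explicitly tolerates infinitely many parabolic pieces). Its two actual roles are: (i) the Schoen--Yau inequality (Lemma \ref{lem:schoenyau}) for harmonic functions of finite Dirichlet energy, which yields that $M$ has at most one \emph{nonparabolic} end (Theorem \ref{thm:onenonparabolic}); and (ii) Lemma \ref{lem:k2+}, giving $\Ric_M\geq -|A_M|^2$, which combined with the curvature estimate of Lemma \ref{lem:boundedgeometry} feeds the Bishop--Gromov comparison in Lemma \ref{lem:linvolgrowth}.

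The correct adaptation is then: (i) is discarded entirely --- with finitely many ends one treats each end according to its type, running the $\mu$-bubble/core-tube construction on each nonparabolic end and Proposition \ref{prop:parabolicends} on each parabolic one, the finiteness of the number of ends keeping the total Dirichlet energy of the test function $O(1/i)$; and (ii) degrades harmlessly to $\Ric_M\geq -K-|A_M|^2\geq -K-C(Q)^2$, which still gives the uniform volume bound. You also do not address the role of $b_1(M)<\infty$: in the proof of Theorem \ref{thm:main} one passes to the universal cover to make the intersection-number/separation arguments work (Proposition \ref{prop:connectedness} and the claim in Lemma \ref{lem:linvolgrowth} that some component $\Sigma_k$ separates $\partial E_k$ from $\partial E_{k+1}$), but doing so here would destroy the hypothesis of finitely many ends; instead $b_1(M)<\infty$ ensures that non-separating components can occur for at most finitely many $k$, which suffices. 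As written, your proposal neither carries out these replacements nor correctly locates the steps that require them.
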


\subsection{Topological applications} 
Recall that Schoen--Yau used minimal surfaces to prove that a complete non-compact $(X^{3},g)$ with $\Ric_g>0$ is diffeomorphic to $\R^{3}$ \cite{SY:ric} (cf.\ \cite{AndersonRodriguez,Liu}). They did this by showing that $\pi_{2}(X) = 0$ and that $X$ is simply connected at infinity. In this section, we observe that given Theorem \ref{thm:main}, similar techniques can be used to prove new topological restrictions on $(X^{4},g)$ with $\Ric_{2}^g>0$, $R_g\geq R_{0}>0$, and weakly bounded geometry. (See \cite{Mouille:web} for a collection of results concerning the $\Ric_{k}^g$ curvature condition.)

All homology groups and cohomology groups here will be taken with $\Z$-coefficients.
\begin{definition}
	Let $X$ be a non-compact manifold, $k\in \mathbb{N}$. Define the $k$-th homology group of $X$ at infinity as the inverse limit
	\[H_k^\infty(X) = \varprojlim_{A\Subset X} H_k(X-A).\]
	We say $X$ is $H_k$-trivial at infinity if the natural homeomorphism $H_k^\infty(X)\to H_k(X)$ is injective.
\end{definition}

In section \ref{sec:topo} we prove the following result. 

\begin{theorem}\label{thm:topo}
	Suppose $(X^4,g)$ is a complete, non-compact, oriented $4$-manifold with weakly bounded geometry, $\Ric_2^g > 0$, and strictly positive scalar curvature $R_g\ge R_0>0$. Then we have:
	\begin{enumerate}
		\item $H_1(X)$ only has torsion elements;
		\item $X$ is $H_2$-trivial at infinity. 
	\end{enumerate}
\end{theorem}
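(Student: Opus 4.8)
The plan is to obtain both conclusions from Theorem \ref{thm:main} by producing a forbidden minimal hypersurface whenever the topological conclusion fails. The starting point is an elementary observation: on a $4$-manifold, $\Ric_2^g > 0$ implies $\Ric_g > 0$. Indeed, for a unit vector $u$ and an orthonormal frame $f_1,f_2,f_3$ of $u^{\perp}$ one has $\Ric_g(u,u) = \text{sec}_g(u,f_1) + \text{sec}_g(u,f_2) + \text{sec}_g(u,f_3)$, and summing the three inequalities $\text{sec}_g(u,f_i) + \text{sec}_g(u,f_j) > 0$ gives $2\Ric_g(u,u) > 0$. Hence, since $\Ric_2^g > 0$ in particular gives $\Ric_2^g \ge 0$, Theorem \ref{thm:main} shows that any complete two-sided stable minimal immersion $M^{3} \to (X^{4},g)$ would be totally geodesic with $\Ric_g(\nu,\nu) \equiv 0$, contradicting $\Ric_g(\nu,\nu) > 0$; so no such $M$ exists at all. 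It therefore suffices, for each of (1) and (2), to show that failure of the conclusion produces such an $M$ --- exactly as in the $3$-dimensional arguments of Schoen--Yau \cite{SY:ric}.

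For (1), suppose $H_1(X)$ has a non-torsion element; this yields a non-trivial homomorphism $\pi_1(X) \to \Z$ and hence a non-trivial smooth map $f : X \to S^{1}$, whose regular level set $\Sigma = f^{-1}(y)$ is a properly embedded two-sided hypersurface that is homologically essential (some loop has nonzero algebraic intersection number with it). I would pass to the infinite cyclic cover $\pi : \hat{X} \to X$ associated to $f_{*} : \pi_1(X) \to \Z$, lift $f$ to $\hat{f} : \hat{X} \to \R$, and minimize relative perimeter over Caccioppoli sets that agree with $\hat{f}^{-1}((-\infty,0])$ outside larger and larger compact subsets of $\hat{X}$. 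The weakly bounded geometry hypothesis supplies the monotonicity formula and interior curvature estimates for area minimizers --- and the ambient dimension $4$ is below the dimension $8$ at which singularities of codimension-one area minimizers appear --- so the minimizers have uniform local area and curvature bounds and one extracts a smooth limit $\hat{M} = \partial^{*}U_{*}$ separating the two ends $\hat{f} \to \pm\infty$ of $\hat{X}$; in particular $\hat{M} \neq \emptyset$. Being the reduced boundary of a set in the oriented $4$-manifold $\hat{X}$, $\hat{M}$ is a smooth, properly embedded, two-sided, area-minimizing (hence stable) hypersurface, so composing with the local isometry $\pi$ gives a complete two-sided stable minimal immersion $\hat{M} \to (X,g)$ --- impossible by the previous paragraph. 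Thus $H_1(X)$ is all torsion.

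For (2), I would deduce it formally from (1). Since $H_1(X)$ is torsion, the universal coefficient theorem gives $H^1(X;\Z) \cong \mathrm{Hom}(H_1(X),\Z) = 0$, and Poincaré duality on the oriented $4$-manifold $X$ yields $H_3^{\mathrm{lf}}(X) \cong H^1(X) = 0$, where $H_3^{\mathrm{lf}}(X) := \varprojlim_{A\Subset X} H_3(X,X-A)$ denotes locally finite (Borel--Moore) homology. Passing to the inverse limit over $A\Subset X$ in the long exact sequences of the pairs $(X,X-A)$ produces --- modulo the usual $\varprojlim^{1}$ terms, which vanish once the systems $\{H_{k}(X-A)\}$ are Mittag--Leffler --- an exact sequence $H_3^{\mathrm{lf}}(X) \to H_2^{\infty}(X) \to H_2(X)$; since the left-hand group vanishes, $H_2^{\infty}(X) \to H_2(X)$ is injective, which is precisely $H_2$-triviality at infinity.

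The step I expect to require the most care is the minimization in (1): in a non-compact ambient manifold one must prevent the minimizing sequence from degenerating or running off to infinity and must ensure the limit is a genuine non-empty complete embedded minimal hypersurface. Passing to the cyclic cover and minimizing a separating Caccioppoli set addresses the first point --- the limiting set still separates the two ends of $\hat{X}$ --- while the weakly bounded geometry hypothesis, the same input that powers the minimal-hypersurface compactness theory used throughout this paper, addresses the second. A secondary technical point in (2) is the vanishing of the $\varprojlim^{1}$ terms, which needs mild tameness of the ends (again guaranteed by the geometric hypotheses); one could also bypass the homological algebra entirely and argue for (2) by a minimization-and-monotonicity argument directly analogous to Schoen--Yau's proof that a complete manifold with $\Ric > 0$ is simply connected at infinity \cite{SY:ric}.
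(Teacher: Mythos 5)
Your preliminary reduction is correct and matches the paper's logic: in dimension $4$, $\Ric_2^g>0$ does imply $\Ric_g>0$, so Theorem \ref{thm:main} rules out complete two-sided stable minimal immersions altogether. Your part (2) is also essentially fine and is a genuinely different route from the paper: the paper re-runs a geometric minimization argument (minimizing $3$-chains with boundary the far-away cycles $\Sigma_j$, anchored to a fixed compact set $A_1$), whereas you deduce (2) purely topologically from (1) via $H^1(X)\cong\mathrm{Hom}(H_1(X),\Z)=0$, Poincar\'e duality $H^1(X)\cong H_3^{\mathrm{lf}}(X)$, and left-exactness of $\varprojlim$. One correction there: the $\varprojlim^1$ vanishing you need is not a geometric "tameness" hypothesis but is automatic, since the relevant kernel tower is $K_A=\mathrm{im}\bigl(H_3(X)\to H_3(X,X-A)\bigr)$, a tower of surjections, hence Mittag--Leffler.

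Part (1), however, has two genuine gaps. First, the step "a non-torsion element of $H_1(X)$ yields a non-trivial homomorphism $\pi_1(X)\to\Z$" is false without finite generation of $H_1(X)$, which is not available for a non-compact manifold: a torsion-free group such as $\Z[1/2]$ or $\Q$ admits no nonzero homomorphism to $\Z$. What your argument would actually prove is $H^1(X;\Z)=\mathrm{Hom}(H_1(X),\Z)=0$, which is strictly weaker than the stated conclusion that $H_1(X)$ is all torsion. The paper's Lemma \ref{lem:topo} is designed precisely to avoid this: it localizes via compactly supported Poincar\'e duality to a precompact set $A$, where $H^{n-1}(X,X\setminus A)\cong H^1(A)\cong\langle A,S^1\rangle$ is finitely generated, producing a compact hypersurface-with-boundary dual to $[\sigma]$ rather than a global map to $S^1$.

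Second, even granting a map $f:X\to S^1$, your minimization in the cyclic cover has no mechanism forcing the limit to be non-empty. The competitors agree with $\hat f^{-1}((-\infty,0])$ only outside compact sets that exhaust $\hat X$, and since $X$ itself is non-compact, the level sets of $\hat f$ are non-compact: the minimizing boundaries can drift off to infinity in the "sideways" directions of $X$, so the $L^1_{\mathrm{loc}}$ limit $U_*$ could be $\emptyset$ or all of $\hat X$, and the asserted separation of the two ends of $\hat X$ is not preserved in the limit. This is exactly what the paper's anchoring device handles: each minimizer $M_k$ is homologous rel boundary to $\tilde M_k$ with $\partial\tilde M_k$ pushed outside $B_k(p)$, so $M_k$ has algebraic intersection number $1$ with the \emph{fixed} compact curve $\sigma$, hence $M_k\cap\sigma\neq\emptyset$ for every $k$ and the limit immersion is non-empty. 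Your construction needs an analogous fixed compact anchor (or uniform barriers) before the contradiction with Theorem \ref{thm:main} can be drawn.
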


It would be interesting to see if the techniques from \cite{AndersonRodriguez,Liu} (cf.\ \cite{CCE,CEM}) could be adapted to study the $\Ric_{2}^g\geq 0$ rigidity version of this result. 

\begin{remark}
If $(X^{n},g)$ has $\Ric_g > 0$, then $H_{n-1}(X) = 0$ by \cite{Yau:function.theoretic.complete,ShenSormani}. 
\end{remark}

In light of Examples \ref{exam:psc} and \ref{exam:hardy}, it would be interesting to know if Theorem \ref{thm:topo} held under weaker curvature conditions (e.g., $\Ric_{2}^g>0$ but without $R_g\geq R_{0}>0$).

\subsection{Idea of the proof and related results} An interesting feature of Theorem \ref{thm:main.sec} is that it combines different curvature conditions. Indeed, the different conditions come in at rather different places in the proof. As a consequence, the strategy of the proof is quite different from most previous results concerning stability of complete (non-compact) minimal surfaces (including the previous work of the first- and second-named authors on stable minimal immersions $M^3\to\mathbf{R}^4$ \cite{CLstable}). A key feature of this paper is the use of $\mu$-bubbles, a technique introduced by Gromov \cite{gromov1996positive}, allowing one to gain distance control (in certain settings) from positivity of scalar curvature. 

The overall strategy is as follows, indicating which curvature conditions enter at which step. Consider $M^3\to (X^{4},g)$ a complete two-sided stable minimal hypersurface. 
\begin{enumerate}
\item To show that $M$ is totally geodesic and has vanishing normal Ricci curvature, we would like to find a sequence of compactly supported functions $\varphi_{i}$ with $\varphi_{i}\to 1$ and $\int_{M}|\nabla\varphi_{i}|^{2} = o(1)$. Taking such functions in the stability inequality then gives the desired conclusion (this just uses $\Ric_g\geq 0$). Note that for minimal surfaces in a $3$-manifold, this condition is usually guaranteed by proving that the surface is conformal to the plane and then using the log-cutoff trick. However, in the higher dimensions, this is not usually not possible (e.g., no such functions exist on flat $\mathbf{R}^{3}$).  
\item The non-negativity of sectional curvature (or $\Ric_{2}^g\geq 0$) ensures that $M$ has at most one nonparabolic end (see Definition \ref{defi:parabolic}). Nonparabolic ends are more difficult to handle, since parabolic ends automatically admit test good functions as described in (1). 
\item The remaining issue\footnote{The reality is somewhat more complicated, since the nonparabolic component could have infinitely many parabolic ends attached, but this captures the main idea.} is to construct a good test function along the nonparabolic end. In general, this is not possible with only a non-negative sectional curvature assumption (cf.\ Example \ref{exam:hardy}). This is where the positivity of scalar curvature comes in. One knows (going back to the work of Schoen--Yau \cite{SY:descent}) that a stable minimal hypersurface in a positive scalar curvature manifold ``acts'' as if it itself has positive scalar curvature. 

In particular, a $3$-dimensional manifold with positive scalar curvature tends to be macroscopically $1$-dimensional in various senses (cf.\ \cite{GL:complete,MarquesNeves:width-psc,Song:embed,LiokumovichMaximo2020waist}). As such, one might imagine that a single end of $M$ has \emph{linear} volume growth (this is where the parabolic/nonparabolic distinction from (2) is important; a metric tree is $1$-dimensional but has exponential length growth, compare with the universal cover of $(S^{1}\times S^{1}) \# (S^{1}\times S^{2})$). Note that if this end had linear volume growth, then the standard cutoff function that is $1$ in $B_{R}$ and $0$ in $B_{2R}$ would have vanishing Dirichlet energy as $R\to\infty$, so this would allow us to construct the desired cutoff function.

It seems difficult (or potentially impossible)\footnote{One can imagine that $E$ is a cylinder $S^{2}\times (0,\infty)$ connected sum at integer points with spheres $S^{3}$ with volume tending to $\infty$ and scalar curvature $\geq 1$. It is unclear if this could be ruled out by stability. }  to actually prove that the nonparabolic end $E$ has linear volume growth. Instead we construct an exhaustion of $E$ by bounded sets $\Omega_{1}
\subset \Omega_{2}\subset \dots$ so that $\partial\Omega_{i}\cap E$ has controlled diameter. To do so, we use the theory of $\mu$-bubbles (surfaces of carefully prescribed mean curvature) first introduced by Gromov \cite{gromov1996positive} (see also \cite{Gromov:metric-inequalities}), allowing one to study metric quantities related to scalar curvature. (This application is similar in spirit to the use of $\mu$-bubbles in \cite{CL:aspherical,Gromov2020metrics,CLL:suff.conn}.) This is the step that uses the assumption of strictly positive scalar curvature. 

\item Finally, to construct a good test function, it remains to show that the $L$-neighborhood of $\partial \Omega_{i} \cap E$ has bounded volume (for some constant $L$). To upgrade diameter control (obtained in (3)) to volume control, we need to control (from below) the intrinsic Ricci curvature of the minimal immersion $M$. This is achieved (thanks to the Gauss equations) by combining a lower bound on the sectional (or $\Ric_{2}^g$) curvatures with an absolute bound on the second fundamental form of $M$ as follows from \cite{CLstable}. This is where the weakly bounded geometry assumption enters. 
\end{enumerate}

We remark that steps (3) and (4) are somewhat reminiscent of a recent result (but not the proof) of Munteanu--Wang proving that a complete $3$-manifold with $R_{g}\geq 1$ and $\Ric_g\geq 0$ has linear volume growth \cite{MW:2} (see also \cite{Xu:scal.int,MW:1,Zhu:psc.int}). See our recent work \cite{CLS:volumePSC} for an approach to this result following the methods of this paper.

\subsection{Acknowledgements} We are grateful to Richard Bamler, Renato Bettiol, Neil Katz, Dan Lee, Christina Sormani, and Ruobing Zhang for some helpful discussions. O.C.\ was supported by an NSF grant (DMS-2016403), a Terman Fellowship, and a Sloan Fellowship. C.L.\ was supported by an NSF grant (DMS-2005287). D.S.\ was supported by an NDSEG Fellowship.

\subsection{Organization of the paper}

In Section \ref{sec:curv} we discuss the $\Ric_{2}\geq 0$ and bounded geometry conditions used here. Section \ref{sec:ends.prelim} contains some preliminary results concerning the ends of non-compact manifolds. We then study the nonparabolic ends in further detail in Section \ref{sec:non.par.stab}. We discuss the $\mu$-bubble exhaustion result and corresponding volume growth estimates and then prove the main results in Section \ref{sec:proofs.main}. We discuss some topological applications of the main results in Section \ref{sec:topo}.  Appendix \ref{sec:curv-cond} contains various curvature conditions referred to in the paper. Appendix \ref{sec:exam} contains some examples relevant to the main results. Finally, Appendix \ref{app:pullback} describes how to pullback immersions along local diffeomorphisms. Finally, Appendix \ref{app:bd.geo} relates sectional curvature bounds to the weakly bounded geometry condition. 

\section{Curvature Preliminaries}\label{sec:curv}

If $(X, g)$ is a Riemannian manifold, we denote the Riemann curvature tensor by $R_g(\cdot, \cdot, \cdot,\cdot)$ with 4 arguments, the sectional curvature by $\text{sec}_g$, the Ricci curvature by $\Ric_g$, and the scalar curvature by $R_g$ with no argument. If $M \to (X, g)$ is an immersion, we write $\Ric_M$ and $R_M$ to denote respectively the Ricci curvature and scalar curvature of the pullback metric on $M$. We follow the curvature conventions used in \cite[Chapter 3]{Petersen:Riemannian}; in particular, if $\{u,v\}$ is an orthonormal basis for a $2$-plane $\Pi \subset T_p X$, then $\text{sec}_g(\Pi) = R_g(v,u,u,v)$.

\subsection{The $\Ric_{2}\geq0$ Condition}

Let $(X^4, g)$ be a complete Riemannian 4-manifold. The following curvature condition is tailored to exploit the Gauss equation for hypersurfaces in 4-manifolds, and sits between non-negative sectional curvature and non-negative Ricci curvature. 

\begin{definition}\label{defi:2-int-Ric}
$(X^4, g)$ satisfies $\Ric_{2}^g\geq 0$ if
\[ R_g(v, u, u, v) + R_g(w, u, u, w) \geq 0 \]
for any orthonormal vectors $\{u, v, w\}$ in $TX$.
\end{definition}
This condition has been called non-negativity of \emph{$2$-intermediate Ricci curvature} in the literature (cf.\ Appendix \ref{sec:curv-cond}). In particular, we note that there is a metric on $S^{2}\times S^{2}$ with strictly positive $2$-intermediate Ricci curvature \cite{Muter} (see \cite[Example 2.3]{Mouille:torus}). 

\begin{lemma}\label{lem:k2+}
Let $(X^4, g)$ be a 4-manifold with $\Ric_{2}^g\geq 0$. If $M^3 \to (X^4,g)$ is a minimal immersion, then
\[ \mathrm{Ric}_M \geq -|A_{M}|^{2}. \]
\end{lemma}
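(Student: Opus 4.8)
The plan is to compute $\Ric_M$ directly from the Gauss equation and then use the minimality of $M$ together with the $\Ric_2^g\ge 0$ hypothesis to control the curvature term that involves the ambient metric. First I would fix a point $p\in M$ and an orthonormal basis $\{e_1,e_2,e_3\}$ of $T_pM$; since $M^3\to X^4$ has codimension one, we have a unit normal $\nu$ and $\{e_1,e_2,e_3,\nu\}$ is an orthonormal basis of $T_pX$. The Gauss equation gives, for the intrinsic sectional curvature of the plane spanned by $e_i,e_j$,
\[
R_M(e_j,e_i,e_i,e_j) = R_g(e_j,e_i,e_i,e_j) + A_M(e_i,e_i)A_M(e_j,e_j) - A_M(e_i,e_j)^2,
\]
so summing over $j\ne i$ yields
\[
\Ric_M(e_i,e_i) = \sum_{j\ne i} R_g(e_j,e_i,e_i,e_j) + H_M\, A_M(e_i,e_i) - \sum_{j\ne i} A_M(e_i,e_j)^2,
\]
where $H_M = \operatorname{tr} A_M$. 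Here minimality enters: $H_M = 0$, so the term $H_M A_M(e_i,e_i)$ drops out, and we are left with $\Ric_M(e_i,e_i) = \sum_{j\ne i} R_g(e_j,e_i,e_i,e_j) - \sum_{j\ne i} A_M(e_i,e_j)^2$.

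Next I would bound the two pieces. For the second fundamental form piece, clearly $\sum_{j\ne i} A_M(e_i,e_j)^2 \le \sum_{j,k} A_M(e_j,e_k)^2 = |A_M|^2$, which already accounts for the full $-|A_M|^2$ on the right-hand side of the claimed inequality; so it suffices to show the ambient curvature piece $\sum_{j\ne i} R_g(e_j,e_i,e_i,e_j)$ is non-negative. This is exactly where $\Ric_2^g\ge 0$ is used: in dimension $4$, for a fixed unit vector $e_i\in T_pX$, the orthogonal complement $e_i^\perp$ is $3$-dimensional, so $\{e_j : j\ne i\}\cup\{\nu\}$ \emph{is not needed} — rather, writing $\{v,w\}$ for any two of the three unit vectors orthogonal to $e_i$ (all mutually orthogonal), Definition \ref{defi:2-int-Ric} gives $R_g(v,e_i,e_i,v) + R_g(w,e_i,e_i,w)\ge 0$. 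Since $\sum_{j\ne i} R_g(e_j,e_i,e_i,e_j)$ is a sum over the three unit vectors $e_j$ orthogonal to $e_i$, I can write it as the average of the three such pairwise sums (each pair appears, and there are $\binom{3}{2}=3$ pairs, each vector in $2$ of them), hence
\[
\sum_{j\ne i} R_g(e_j,e_i,e_i,e_j) = \tfrac12 \sum_{\{v,w\}} \big(R_g(v,e_i,e_i,v) + R_g(w,e_i,e_i,w)\big) \ge 0,
\]
the sum being over the three $2$-element subsets $\{v,w\}$ of $\{e_j:j\ne i\}$. Combining, $\Ric_M(e_i,e_i)\ge -|A_M|^2$ for every unit $e_i$, which is the assertion.

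I do not anticipate a genuine obstacle here — the lemma is essentially a bookkeeping exercise with the Gauss equation — but the one point requiring care is the combinatorial step showing that $\Ric_2^g\ge 0$ applied to pairs of vectors inside $e_i^\perp$ controls the \emph{sum} $\sum_{j\ne i}R_g(e_j,e_i,e_i,e_j)$; this works precisely because $\dim X = 4$, so that $e_i^\perp$ has dimension exactly $3$ and the required sum of three sectional-type terms can be realized as a non-negative combination of the pairwise expressions allowed by Definition \ref{defi:2-int-Ric}. (In higher dimensions one would need $\Ric_k^g$ for larger $k$, which is consistent with the remark that this condition is ``tailored'' to $4$-manifolds.) It is also worth noting the inequality is sharp in the direction claimed only up to the crude bound $\sum_{j\ne i}A_M(e_i,e_j)^2\le |A_M|^2$; no attempt to optimize this is needed for the stated result.
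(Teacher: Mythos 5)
Your proposal follows the same route as the paper: trace the Gauss equation, use minimality to remove the mean--curvature term, use $\Ric_2^g\geq 0$ to discard the ambient curvature term, and bound the second fundamental form contribution crudely by $|A_M|^2$. The conclusion is reachable this way, but two of your intermediate statements are wrong as written, and one of them reflects a miscount you should fix.

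The minor one: summing $A_M(e_i,e_i)A_M(e_j,e_j)$ over $j\ne i$ gives $A_M(e_i,e_i)\bigl(H_M-A_M(e_i,e_i)\bigr)$, not $H_M\,A_M(e_i,e_i)$; with $H_M=0$ the correct traced Gauss equation is $\Ric_M(e_i,e_i)=\sum_{j\ne i}R_g(e_j,e_i,e_i,e_j)-\sum_{j=1}^{3}A_M(e_i,e_j)^2$, the $A$-sum including $j=i$. This is harmless, since that full sum is still $\leq|A_M|^2$. The more substantive issue is your treatment of the curvature term. The sum $\sum_{j\ne i}R_g(e_j,e_i,e_i,e_j)$ produced by tracing the Gauss equation for the \emph{intrinsic} Ricci curvature of $M^3$ runs over the \emph{two} tangential directions $e_j\in T_pM$ with $j\ne i$; it is not a sum over ``the three unit vectors orthogonal to $e_i$'' in $T_pX$ (that three-term sum would be the ambient $\Ric_g(e_i,e_i)$, and $\nu$ never enters the trace defining $\Ric_M$). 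Your averaging identity over the three $2$-element subsets is therefore an identity for the wrong quantity, and it is false for the actual two-term sum, which has only one such subset. The repair is immediate and removes all the combinatorics: the two-term sum $R_g(e_{j_1},e_i,e_i,e_{j_1})+R_g(e_{j_2},e_i,e_i,e_{j_2})$ is \emph{exactly} the expression controlled by Definition \ref{defi:2-int-Ric} applied to the orthonormal triple $\{e_i,e_{j_1},e_{j_2}\}\subset T_pM\subset T_pX$, hence non-negative. This is what the paper does. Your closing remark about higher dimensions is right in spirit but for this reason: for a minimal hypersurface $M^{n-1}\to X^n$ the traced Gauss equation yields an $(n-2)$-term tangential sum, which is why $\Ric_{n-2}^g\geq 0$ is the natural hypothesis.
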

\begin{proof}
Pick an orthonormal basis $\{e_l\}$ at a point $p \in M$ with $e_4$ normal to $M$. The traced Gauss equation gives
\[ R_g(e_2, e_1, e_1, e_2) + R_g(e_3, e_1, e_1, e_3) - \mathrm{Ric}_M(e_1, e_1) = \sum_{j=1}^3 A_M(e_1, e_j)^2. \]
Rearranging and using $\Ric_{2}^g\geq0$ we have
\[ \mathrm{Ric}_M(e_1, e_1) \geq R_g(e_2, e_1, e_1, e_2) + R_g(e_3, e_1, e_1, e_3) - |A_{M}|^2 \geq -|A_{M}|^{2}. \]
Since $e_1$ was an arbitrary unit vector in $T_pM$, the conclusion follows.
\end{proof}

\subsection{Weakly Bounded Geometry}
\begin{definition}\label{defi:bound.geo}
	In this paper, we say that a complete Riemannian manifold $(X^{n}, g)$ has $Q$-\emph{weakly bounded geometry} if for any $p \in X$, there is a $C^{2,\alpha}$ local diffeomorphism $\Phi : ((B(0,Q^{-1}),0)\subset \mathbb{R}^{n} \to (U,p)\subset X$ so that 
	\begin{enumerate}
		\item we have $e^{-2Q} \delta \leq \Phi^{*}g \leq e^{2Q} \delta$ in the sense of bilinear forms, and
		\item it holds that $\Vert \partial_{k} \Phi^{*}g_{ij}\Vert_{C^{\alpha}}\leq Q$. 
	\end{enumerate}
\end{definition}
We will often say that $(X,g)$ has ``weakly bounded geometry'' if the above definition holds for some $Q$. 

Note that this condition allows for some collapsing behavior of $(X,g)$ at infinity.\footnote{For example, a hyperbolic cusp has weakly bounded geometry but the injectivity radius tends to zero at infinity.} It is well-known that $|\textrm{sec}_g|\leq K <\infty$ implies $Q$-weakly bounded geometry for $Q=Q(K)$ (cf.\ \cite{RST}). We outline the proof of this fact in Proposition \ref{prop:bd.geo}. Note also that $\Ric_g\geq 0$ and $\sec_g \leq K$ imply that $|\sec_g|\leq (n-2)K$, and in this paper the ambient space $(X^{4},g)$ is always assumed to have $\Ric_g\geq 0$ (often a stronger condition is assumed).

Alternatively, we recall that $|\textrm{Ric}_g|\leq K$ and $\inj(X, g) \geq i_{0}>0$ also implies this condition (and actually one can replace ``local diffeomorphism'' by ``diffeomorphism'' in this case); this follows from \cite{Anderson} (cf.\ \cite[Theorem 11.4.3]{Petersen:Riemannian}).

The first- and second-named authors have recently proven that a two-sided complete stable minimal hypersurface $M^{3}\to\mathbf{R}^{4}$ is flat \cite{CLstable}. Using a standard blow-up argument, this yields curvature estimates for stable minimal hypersurfaces in (compact) $4$-manifolds. Here, we observe that the same thing holds for non-compact $4$-manifolds, assuming the weakly bounded geometry condition. The proof below is similar to the argument used in \cite{RST}  (see also \cite{Cooper}) to prove curvature estimates for stable minimal (or more generally CMC) surfaces in $3$-manifolds (cf.\ \cite{Schoen:estimates}).

\begin{lemma}\label{lem:boundedgeometry}
	Let $(X^4, g)$ be a complete 4-manifold with $Q$-weakly bounded geometry. Then there is a constant $C=C(Q) < \infty$ such that every compact two-sided stable minimal immersion $M^3 \to (X^4, g)$ satisfies
	\[ \sup_{q\in M} |A_M(q)|\min\{1, d_M(q, \partial M)\} \leq C. \]
\end{lemma}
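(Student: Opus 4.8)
The plan is to run a standard blow-up (compactness) argument, reducing the curvature estimate to the Euclidean rigidity theorem of \cite{CLstable} together with the $\eps$-regularity afforded by the weakly bounded geometry hypothesis. Suppose the estimate fails. Then there is a sequence of complete $4$-manifolds $(X_i,g_i)$ with $Q$-weakly bounded geometry, compact two-sided stable minimal immersions $M_i^3\to(X_i,g_i)$, and points $q_i\in M_i$ with
\[
\lambda_i := |A_{M_i}(q_i)|\min\{1,d_{M_i}(q_i,\partial M_i)\} \to \infty.
\]
A point-picking argument of Schoen's type (as in \cite{RST,Schoen:estimates}) lets me replace $q_i$ by a ``most curved'' point $p_i$ so that, writing $\mu_i := |A_{M_i}(p_i)|$, the second fundamental form is bounded by (essentially) $2\mu_i$ on the intrinsic ball of radius $\mu_i^{-1}\lambda_i/2$ about $p_i$, and $\mu_i d_{M_i}(p_i,\partial M_i)\to\infty$. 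Note $\mu_i\to\infty$.

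Next I would pass to local coordinates: let $\Phi_i:(B(0,Q^{-1}),0)\to(U_i,p_i)\subset X_i$ be the local diffeomorphism from Definition \ref{defi:bound.geo}. By Appendix \ref{app:pullback} the immersion $M_i$ lifts near $p_i$ to a stable minimal immersion into $(B(0,Q^{-1}),\Phi_i^*g_i)$; the lift is stable because the stability inequality is preserved under pullback along a local diffeomorphism (test functions pull back) and the potential $|A|^2+\Ric(\nu,\nu)$ is a pointwise quantity. Now rescale: set $\hat g_i := \mu_i^2\,\Phi_i^*g_i$ on $B(0,\mu_i Q^{-1})$. Since $\Phi_i^*g_i$ is $C^{1,\alpha}$-close to $\delta$ with $C^\alpha$-bounded first derivatives of the metric (condition (2) in Definition \ref{defi:bound.geo}), the rescaled metrics $\hat g_i$ converge in $C^{1,\alpha}_{\mathrm{loc}}$ on all of $\R^4$ to the flat metric $\delta$ (the curvature of $\hat g_i$ is $O(\mu_i^{-2})\to 0$, and the metric components become arbitrarily flat on compact sets). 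The rescaled hypersurfaces $\hat M_i\subset(\R^4,\hat g_i)$ pass through the origin, have $|A_{\hat M_i}(0)|=1$, and satisfy $|A_{\hat M_i}|\le 2+o(1)$ on intrinsic balls of radius $\to\infty$; they are stable minimal (scaling and pullback preserve this). Standard interior elliptic/compactness theory for minimal immersions with bounded second fundamental form (writing them locally as graphs of uniformly bounded $C^{2,\alpha}$ functions, using Schauder estimates from the minimal surface equation in the ambient metrics $\hat g_i$) gives a subsequential limit: a complete, two-sided, stable minimal immersion $M_\infty^3\to(\R^4,\delta)$ with $|A_{M_\infty}(0)|=1$. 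By the main theorem of \cite{CLstable}, $M_\infty$ is flat, hence totally geodesic, so $A_{M_\infty}\equiv 0$ — contradicting $|A_{M_\infty}(0)|=1$.

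The main obstacle — and the only place real care is needed — is the convergence step: one must ensure the immersions $\hat M_i$ do not degenerate (e.g.\ sheets colliding, or loss of the immersed graphical structure) and that the limit is genuinely \emph{complete}. Completeness of $M_\infty$ follows from the point-selection: the bound $|A_{\hat M_i}|\le 2+o(1)$ holds on intrinsic balls of radius growing to infinity and the boundary $\partial\hat M_i$ recedes to infinity, so for each fixed $R$ the intrinsic ball $B_R^{\hat M_i}(0)$ is uniformly graphical over a fixed-size disk in $T_0 M_\infty$ and embeds with controlled geometry; a diagonal argument over $R\to\infty$ produces the complete limit. One should also note that ``local diffeomorphism'' (rather than diffeomorphism) in Definition \ref{defi:bound.geo} is harmless here: the entire argument is local around $p_i$ at scale $\mu_i^{-1}\to 0$, well inside the domain $B(0,Q^{-1})$ where $\Phi_i$ is defined, so the relevant portion of $M_i$ lifts unambiguously (this is exactly what Appendix \ref{app:pullback} provides). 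Everything else — Schoen's point-picking, Schauder estimates, scaling of the stability inequality — is routine.
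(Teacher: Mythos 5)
Your proposal follows essentially the same route as the paper: argue by contradiction, pick an (almost) most-curved interior point, lift the immersion through the local diffeomorphism $\Phi_i$ via the pullback construction of Appendix \ref{app:pullback}, rescale by the curvature, use the $C^{1,\alpha}$ control on the metric to get graphical $C^{2,\alpha}$ estimates and a complete stable minimal limit in flat $\mathbf{R}^4$ with $|A|(0)=1$, and contradict \cite{CLstable}. The structure, the role of each hypothesis, and the convergence discussion all match the paper's proof.

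There is, however, one step whose justification as written does not work: the claim that the lift $S_i\to(B(0,Q^{-1}),\Phi_i^*g_i)$ is stable ``because test functions pull back.'' Stability of $S_i$ requires the second variation to be nonnegative for \emph{every} $\varphi\in C^\infty_c(S_i)$, and such a $\varphi$ need not descend to (nor arise as a pullback of) a compactly supported function on $M_i$, since $\Psi_i:S_i\to M_i$ is only a local diffeomorphism (points of $M_i$ may have several preimages, and $\Psi_i$ need not be proper). Pulling back test functions from $M_i$ only verifies the inequality on a subclass of test functions on $S_i$, which is the wrong direction. The correct (and standard) fix, which is what the paper does, is to use the Fischer-Colbrie--Schoen characterization: stability of $M_i$ gives a positive solution $u>0$ of the Jacobi (stability) operator on $M_i$, and $u\circ\Psi_i$ is a positive solution on $S_i$ (the operator is local, so it commutes with pullback by a local diffeomorphism), whence $S_i$ is stable. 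With that substitution your argument is complete; the remaining points (point-picking, rescaling, Schauder/graphical compactness, completeness of the limit from the receding boundary) are handled the same way in the paper.
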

\begin{proof}
	This follows by combining a standard point picking argument (cf.\ \cite[Theorem 3]{CLstable}) with the lifting argument described in Appendix \ref{app:pullback}. For the sake of completeness we describe the full argument below. 
	
	For contradiction, we assume that the assertion fails. Then there is a sequence of compact two-sided stable minimal immersions $M_i^3 \to (X^4_i, g_i)$ such that
	\[
	\sup_{q\in M_{i}} |A_{M_i}(q)| \min\{1,d_{M_{i}}(q,\partial M_{i})\} \to \infty
	\] 
	as $i\to\infty$. Since $M_{i}$ is compact and the argument of the supremum is continuous and vanishes on $\partial M_{i}$, there is $p_{i}\in M_{i}\setminus\partial M_{i}$ with 
	\[
	|A_{M_i}(p_{i})| \min\{1,d_{M_{i}}(p_{i},\partial M_{i})\} = \sup_{q\in M_{i}} |A_{M_i}(q)| \min\{1,d_{M_{i}}(q,\partial M_{i})\} \to \infty. 
	\]
	Define $r_{i} = |A_{M_i}(p_{i})|^{-1} \to 0$ and $x_{i}$ the image of $p_{i}$ in $X_i$. By the weakly bounded geometry assumption, there are local diffeomorphisms 
	\[
	\Phi_{i} : (B(0,Q^{-1}),0)\subset \mathbf{R}^{4} \to (X_i,x_{i})
	\]
	with $e^{-2Q}\delta \leq \Phi^{*} g_i \leq e^{2Q}\delta$ and $\Vert \partial_{a}\Phi_{i}^{*}(g_i)_{bc}\Vert_{C^{\alpha}}\leq Q$. 
	
	By the pullback operation described in Appendix \ref{app:pullback}, we can find a sequence of pointed $3$-manifolds $(S_{i},s_{i})$, immersions $F_{i} : (S_{i},s_{i}) \to (B(0,Q^{-1}),0)$, and local diffeomorphisms $\Psi_{i} : (S_{i},s_{i})\to (M_i,p_{i})$ so that the following diagram commutes (writing $B=B(0,Q^{-1})\subset \mathbf{R}^{4}$)
	\[
	\xymatrix{
		S_{i} \ar[r]^{F_{i}} \ar[d]_{\Psi_{i}}& B\ar[d]^{\Phi_{i}}\\
		M_i \ar[r] & X_i
	}
	\]
	and so that $F_{i} : S_{i}\to (B,\Phi_{i}^{*}g_i)$ is a two-sided stable minimal immersion. (Two-sided and minimality follow immediately as they are local properties, while stability follows by lifting a positive first eigenfunction for the stability operator on $M_i$ to $S_{i}$.)
	
	Define maps $D_i : B(0,r_i^{-1}Q^{-1}) \to B(0,Q^{-1})$, $x\mapsto r_i x$ and then consider the metrics $\tilde g_i : = r_i^{-2} D_i^*\Phi^* g_i$ on $B(0,r_i^{-1}Q^{-1})$. The bounded geometry condition ensures that $\tilde g_i$ converges to the flat metric $\delta$ on $\mathbf{R}^4$ in $C^{1,\alpha}_{\textrm{loc}}$ (in sense of $C^{1,\alpha}_\textrm{loc}$ convergence of the metric coefficients in Euclidean coordinates). In particular, the Christoffel symbols of $\tilde g_i$ with respect to the Euclidean coordinates on $B(0,r_i^{-1}Q^{-1})$ converge to $0$ in $C^{0,\alpha}_\textrm{loc}$. 
	
	We now consider the rescaled immersions $\tilde F_i : = D_i^{-1} \circ F_i : S_i \to B(0,r_i^{-1}Q^{-1})$. Observe that $\tilde F_i$ is a minimal immersion with respect to $\tilde g_i$. Furthermore, by the point picking argument, if $q\in S_i$ has $d(s_i,q) \leq \rho$ (with respect to $\tilde F_i^* g_i$) then the second fundamental form of $\tilde F_i$ with respect to $\tilde g_i$ satisfies $|A_{\tilde F_i}(q)| \leq C=C(\rho)$. 
	
	The bounded curvature condition ensures there is $\mu>0$ (a numerical constant) so that for any $q \in S_{i}$, for $i$ sufficiently large, we can write $B_{\mu}^{S_{i}}(q)$ as a normal graph (in the Euclidean coordinates) over a subset of $T_{q}S_{i}$ of a function $f_i$ with (Euclidean) $C^{2}$-norm $\leq \mu$ (cf. \cite[Lemma 2.4]{CM:book}, except one should use that the Christoffel symbols are uniformly controlled). Geometric considerations show that the area of such a graph depends on the metric coefficients of $\tilde g_i$ (but not derivatives of the metric). In particular, the first variation formula (and minimality of $\tilde F_i$) implies that $f_i$ satisfies a (uniformly) elliptic PDE in divergence form whose coefficients depend on the metric coefficients $\tilde g_i$ (but not derivatives). Thus, $f_i$ satisfies a non-divergence form elliptic PDE, whose coefficients depend on the coefficients of $\tilde g_i$ and $\partial \tilde g_i$ (in Euclidean coordiantes). These coefficients are uniformly controlled in $C^{\alpha}$, and thus Schauder estimates yield interior $C^{2,\alpha}$-estimates for $f_i$. In particular, the injectivity radius of the pull back metric on $\tilde h_i : = \tilde F_i^* \tilde g_i$ is locally uniformly bounded. The Gauss equations imply that the sectional curvature of $\tilde h_i $ is locally uniformly bounded. 
	
	In particular, the injectivity radius and curvature bounds imply that we can take a subsequential $C^{1,\alpha}$ limit of $(S_i,\tilde h_i ,s_i)$ in the pointed Cheeger--Gromov sense (see \cite[Theorem 11.4.7]{Petersen:Riemannian}) to a limit $(S,h,s)$. Recall that this means that for any $s \in \Omega\Subset S$ there is $i_0(\Omega)$ large so that for $i\geq i_0(\Omega)$ there are embeddings
	\[
	G_i : (\Omega,s) \to (S_i,s_i) 
	\] 
	so that $G_i^* \tilde h_i \to h$ in the $C^{1,\alpha}$-topology. 
	
	We now verify that we can also pass the immersions $\tilde F_i$ to a subsequential $C^{2,\beta}_\textrm{loc}$ limit for any $\beta<\alpha$. To be precise, we claim that up to passing to a subsequence, for $\Omega\Subset S$ the maps $\tilde F_i \circ G_i : (\Omega,s) \to B(0,r_i^{-1}Q^{-1}) \subset \mathbf{R}^4$ converge in the $C^{2,\beta}$-topology. It is convenient prove this with respect to the flat metric on $\mathbf{R}^4$; in particular, this is just a statement about $\mathbf{R}^4$ valued functions on $(\Omega,s)$ as opposed to a statement about maps between Riemannian manifolds. 
	
	To prove this claim, it suffices to obtain $C^{2,\alpha}$ estimates of the functions $\tilde x_k : = x_k  \circ \tilde F_i \circ G_i$ for $k=1,2,3,4$ (with respect to the metrics $G^*_i \tilde h_i$). Indeed, this follows from the fact that $D^2_{G_i^* \tilde F_i^* \delta} \tilde x_k = (\partial_k \cdot \nu_\delta) A_{G_i \circ \tilde F_i,\delta} $ and that the (Euclidean) second fundamental form $A_{G_i \circ \tilde F_i,\delta} $ is locally uniformly in $C^{\alpha}$ (this follows in an straightforward way from the fact that the local graphical functions $f_i$ are locally uniformly bounded in $C^{2,\alpha}$).

	Thus, up to passing to a subsequence, the immersions $\tilde F_i$ limit to $F : (S,s) \to (\mathbf{R}^4,0)$ in the $C^{2,\beta}_\textrm{loc}$-sense as described above. This (and the $C^{1,\alpha}_\textrm{loc}$ convergence of $\tilde g_i$ to $\delta$) implies that $F$ is a complete stable minimal immersion with $|A_F(s)| = 1$. By \cite[Theorem 1]{CLstable}, such an immersion must be flat. This contradiction completes the proof. 
\end{proof}

Proposition \ref{prop:bd.geo} shows that $|\textnormal{sec}_g|\leq K$ implies $Q$-weakly bounded geometry for $Q=Q(K)$, so we have also proven the following:
\begin{corollary}
	Let $(X^{4},g)$ be a complete 4-manifold with $|\textnormal{sec}_g|\leq K$. Then there is a constant $C(K) < \infty$ such that every compact two-sided stable minimal immersion $M^{3}\to (X^{4},g)$ satisfies
	\[
	\sup_{q \in M} |A_{M}(q)| \min\{1,d_{M}(q,\partial M)\} \leq C(K).
	\]
\end{corollary}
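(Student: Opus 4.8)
The plan is to obtain this as an immediate consequence of Lemma \ref{lem:boundedgeometry} together with the implication, recorded in Proposition \ref{prop:bd.geo}, that a two-sided sectional curvature bound $|\textnormal{sec}_g|\leq K$ forces $Q$-weakly bounded geometry for some $Q = Q(K)$. Granting Proposition \ref{prop:bd.geo}, there is nothing further to do: one applies Lemma \ref{lem:boundedgeometry} with $Q = Q(K)$ and sets $C(K) := C(Q(K))$, where $C(\cdot)$ is the constant furnished by that lemma. So the entire content of the corollary is the passage from curvature bounds to weakly bounded geometry (Proposition \ref{prop:bd.geo}) plus the blow-up argument already carried out in Lemma \ref{lem:boundedgeometry}.

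Since Proposition \ref{prop:bd.geo} is only outlined later, let me indicate the shape of that implication, as it is the one genuinely new ingredient. Fix $p\in X$. A two-sided bound $|\textnormal{sec}_g|\le K$ controls the conjugate radius from below (Rauch comparison), so geodesic normal coordinates exist on $B_g(p,r_0)$ for $r_0 = r_0(K)$, and standard Jacobi field comparison gives $c(K)^{-1}\delta \le g \le c(K)\delta$ there, which yields condition (1) of Definition \ref{defi:bound.geo} after rescaling. Condition (2), i.e.\ $C^{1,\alpha}$ control of the metric coefficients, is not available in normal coordinates but follows after changing to harmonic coordinates: in harmonic coordinates $\Delta_g g_{ij}$ is controlled by $\Ric_g$ and first-order terms, so elliptic regularity produces harmonic charts of definite size (depending only on $K$, and on a lower injectivity radius bound only in the non-collapsed case) on which $\|\partial_k g_{ij}\|_{C^\alpha}$ is bounded; see \cite{Anderson} and \cite[Theorem 11.4.3]{Petersen:Riemannian}. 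The one point to keep track of is that the injectivity radius of $(X,g)$ may degenerate, so these charts are only \emph{local} diffeomorphisms from a fixed Euclidean ball, not embeddings — which is precisely why Definition \ref{defi:bound.geo} is stated with local diffeomorphisms and why the lifting construction of Appendix \ref{app:pullback} is invoked in the proof of Lemma \ref{lem:boundedgeometry}.

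The only real obstacle, then, lives entirely in the two results already in hand: the point-picking and Cheeger--Gromov compactness argument of Lemma \ref{lem:boundedgeometry}, whose punchline is the Bernstein-type rigidity theorem of \cite{CLstable} for complete two-sided stable minimal hypersurfaces in $\mathbf{R}^4$, and the harmonic-coordinate regularity theory underlying Proposition \ref{prop:bd.geo}. (The overall strategy here is the one used by \cite{RST}, and \cite{Cooper}, to deduce curvature estimates for stable CMC surfaces in $3$-manifolds from a flatness/Bernstein statement in the model space.) Once those are available, the corollary is a formal consequence, with $C(K)$ depending on $K$ only through $Q(K)$.
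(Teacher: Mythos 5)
Your proposal is correct and matches the paper's argument exactly: the corollary is stated as an immediate consequence of Lemma \ref{lem:boundedgeometry} once Proposition \ref{prop:bd.geo} converts $|\textnormal{sec}_g|\leq K$ into $Q(K)$-weakly bounded geometry, and your sketch of that proposition (Jacobi field comparison for the metric equivalence, harmonic coordinates for the $C^{1,\alpha}$ control, with the exponential map serving only as a local diffeomorphism) is the same route the paper outlines in Appendix \ref{app:bd.geo}.
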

This corollary will not be used in this paper, but we have included it since it resolves the conjecture of Schoen stated in \cite[Conjecture 2.13]{CM:book} (the analogous $3$-dimensional result was proven by Schoen in \cite{Schoen:estimates}, see also \cite{RST}). As mentioned above, the result in \cite[Theorem 3]{CLstable} established such an estimate only for closed $(X^{4},g)$ where $C=C(X,g)$ (see also \cite[\S 3]{White:notes}).

\section{Preliminary Results on Ends}\label{sec:ends.prelim}
We establish notation and collect some relevant facts about ends.  

\subsection{Ends adapted to geodesic balls}
Let $M$ be a complete Riemannian manifold. Fix a point $x \in M$, and take a length scale $L > 0$.

\begin{definition}
A collection of open sets $\{E_k\}_{k \in \N}$ is an \emph{end of $M$ adapted to $x$ with length scale $L$} if each set $E_k$ is an unbounded connected component of $M \setminus \overline{B}_{kL}(x)$ and satisfies $E_{k+1} \subset E_k$.
\end{definition}

\begin{proposition}\label{prop:connectedness}
If $M$ is simply connected and $\{E_k\}$ is an end adapted to $x$ with length scale $L$, then both $\overline{E}_k \setminus E_{k+1}$ and $\partial E_k$ are connected for all $k$.
\end{proposition}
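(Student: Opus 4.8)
The plan is to deduce both assertions from two applications of Mayer--Vietoris, the only global input being that a simply connected manifold has $H_1(M;\Z)=0$. Write $\rho=d(x,\cdot)$; since $M$ is complete this is a proper continuous function, each $\overline B_{jL}(x)=\rho^{-1}([0,jL])$ is compact, and it is connected since every point is joined to $x$ by a minimizing geodesic, which stays inside $\overline B_{jL}(x)$. First I would record a few point-set facts. A point of $\overline{E_k}$ with $\rho>kL$ would lie in the open set $\rho^{-1}((kL,\infty))$, hence (being a limit of points of $E_k$) in $E_k$; thus $\partial E_k\subseteq\rho^{-1}(kL)$, so $\partial E_k$ is compact and $\overline{E_k}=E_k\sqcup\partial E_k$. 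From $E_{k+1}\subseteq E_k$ we get $\overline{E_{k+1}}\subseteq\overline{E_k}$, and since $\partial E_{k+1}\subseteq\rho^{-1}((k+1)L)$ is disjoint from $\partial E_k$, this forces $\overline{E_{k+1}}=E_{k+1}\sqcup\partial E_{k+1}\subseteq E_k$; in particular $\overline{E_{k+1}}\cap(M\setminus E_k)=\emptyset$, and $\overline{E_k}\setminus E_{k+1}=\overline{E_k}\cap(M\setminus E_{k+1})$ is closed. Finally, $M\setminus E_k$ is connected: it is $\overline B_{kL}(x)$ together with the remaining components $W_\alpha$ of $\rho^{-1}((kL,\infty))$, and each $W_\alpha$, being a proper nonempty open subset of the connected manifold $M$, has nonempty frontier contained in $\rho^{-1}(kL)\subseteq\overline B_{kL}(x)$, so $\overline{W_\alpha}$ meets the connected set $\overline B_{kL}(x)$.

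For $\partial E_k$, I would apply Mayer--Vietoris to the closed cover $M=\overline{E_k}\cup(M\setminus E_k)$, whose intersection is $\partial E_k$. Since $\overline{E_k}$ (the closure of a connected set) and $M\setminus E_k$ are both connected and $\widetilde H_1(M)=0$, the reduced sequence gives $\widetilde H_0(\partial E_k)=0$, so the nonempty set $\partial E_k$ is connected. For $\overline{E_k}\setminus E_{k+1}$, put $A=\overline{E_k}\setminus E_{k+1}$ and $B=(M\setminus E_k)\cup\overline{E_{k+1}}$; both are closed and $A\cup B=M$. Using the containments above one checks $A\cap B=\partial E_k\sqcup\partial E_{k+1}$, which by the first part (applied at levels $k$ and $k+1$) has exactly two components; moreover $B$ is the disjoint union of the connected sets $M\setminus E_k$ and $\overline{E_{k+1}}$, hence also has exactly two components. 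The reduced Mayer--Vietoris sequence for $M=A\cup B$, with $\widetilde H_1(M)=0$, then gives $\widetilde H_0(A\cap B)\cong\widetilde H_0(A)\oplus\widetilde H_0(B)$; since $\widetilde H_0(A\cap B)\cong\Z\cong\widetilde H_0(B)$, this forces $\widetilde H_0(A)=0$, i.e.\ $A=\overline{E_k}\setminus E_{k+1}$ is connected.

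The step I expect to need the most care is purely point-set: metric balls need not be smooth domains, so the singular Mayer--Vietoris sequence for a \emph{closed} cover is not directly available. I would resolve this by working throughout with \v{C}ech cohomology, for which Mayer--Vietoris holds for closed covers of a paracompact Hausdorff space; one has $\check H^{1}(M;\Z)=H^{1}(M;\Z)=\operatorname{Hom}(H_1(M),\Z)=0$ on a simply connected manifold, and $\check{\widetilde H}{}^{0}(Y)=0$ precisely when $Y$ is connected, so the arguments above go through verbatim with $\widetilde H_*$ replaced by $\check{\widetilde H}{}^{*}$. Alternatively, one may approximate $\rho$ by a smooth proper function with regular level sets at the radii $jL$, after checking this leaves the component structure at scale $L$ unchanged; or, for the connectedness of $\partial E_k$ alone, one may simply invoke the unicoherence of the simply connected manifold $M$.
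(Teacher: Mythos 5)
Your argument is correct, but it takes a genuinely different route from the paper's. The paper disposes of $\partial E_k$ with a geometric separation argument: if $\partial E_k$ had two components, the connectedness of $B_{kL}(x)$ and of $E_k$ would let one close up a loop meeting two of those components with odd intersection number, contradicting simple connectedness; the connectedness of $\overline{E}_k\setminus E_{k+1}$ is then obtained by sliding points along radial geodesics from $x$ until they hit $\partial E_k$. You instead run a Phragm\'en--Brouwer/unicoherence argument: after the point-set preliminaries ($\partial E_k\subset\rho^{-1}(kL)$, $\overline{E}_{k+1}\subset E_k$, connectedness of $M\setminus E_k$ via the frontiers of the other components — all of which check out and are genuinely needed), Mayer--Vietoris for the closed cover $M=\overline{E}_k\cup(M\setminus E_k)$ with $H_1(M)=0$ gives $\widetilde H_0(\partial E_k)=0$, and a second Mayer--Vietoris with $B=(M\setminus E_k)\sqcup\overline{E}_{k+1}$ (two components) against $A\cap B=\partial E_k\sqcup\partial E_{k+1}$ (two components, by the first part) forces $A=\overline{E}_k\setminus E_{k+1}$ to be connected. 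Your approach buys rigor exactly where the paper is informal: metric spheres need not be submanifolds, so ``intersection number with $\partial E_k$'' requires justification, whereas \v{C}ech (or Alexander--Spanier) cohomology does satisfy Mayer--Vietoris for closed covers of a paracompact space and $\check{\widetilde H}{}^0$ does detect connectedness; the price is the heavier machinery (or the appeal to unicoherence of simply connected manifolds, which handles the first claim but not directly the second). The paper's version is shorter and more geometric but implicitly assumes transversality/regularity that your write-up avoids. Of your three suggested fixes, the \v{C}ech route and the unicoherence route are clean; the smoothing-of-$\rho$ alternative is the weakest, since replacing $\rho$ changes the sets $E_k$ themselves and the claim that the component structure is unchanged would need its own argument.
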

\begin{proof}

Suppose $\partial E_k$ has at least two components. Since $B_{kL}(x)$ and $E_k$ are connected, we can construct a loop in $M$ having nontrivial intersection number with two of the components of $\partial E_k$, which contradicts that $M$ is simply connected. The connectedness of $\overline{E}_k\setminus E_{k+1}$ follows from the connectedness of $\partial E_k$ by taking segments of radial geodesics from $x$, which must intersect $\partial E_k$.

\end{proof}

\subsection{Parabolicity and nonparabolicity}
We recall the notion of parabolicity for subsets.

\begin{definition}\label{defi:parabolic}
Let $M$ be a complete Riemannian manifold. Let $K \subset M$ be a compact subset of $M$. Let $E \subset M$ be an unbounded component of $M \setminus K$ with smooth boundary. We say that $E$ is \emph{parabolic} if it does not admit a positive harmonic function $f$ satisfying
\[ f|_{\partial E}\ \equiv 1 \ \ \text{and}\ \ f|_E\ < 1. \]
Otherwise $E$ is \emph{nonparabolic}.
\end{definition}

We first state a well-known result about parabolic sets.

\begin{proposition}\label{prop:parabolicends}
Let $M$ be a complete Riemannian manifold. Let $K \subset M$ be a compact subset of $M$. Let $E \subset M$ be an unbounded component of $M \setminus K$ with smooth boundary. Suppose $E$ is parabolic. Let $f_i$ be harmonic functions on $E \cap B_{R_i}$ satisfying
\[ f_i|_{\partial E}\ \equiv 1\ \ \text{and}\ \ f_i|_{\partial B_{R_i}(x)}\ \equiv 0, \]
where $R_i \to \infty$ and $K \subset B_{R_i}(x)$ for all $i$. Then $f_i$ converges uniformly to 1 on compact subsets, and
\[ \lim_{i \to \infty} \int_{E} |\nabla f_i|^2 = 0. \]
\end{proposition}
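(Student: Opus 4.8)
The statement is the standard fact that on a parabolic end, the capacity potentials exhaust energy in the limit; I would prove it by a direct energy comparison argument combined with the definition of parabolicity.

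First I would record that each $f_i$ exists and is unique by solving the Dirichlet problem on the compact region $E \cap B_{R_i}(x)$ with the prescribed boundary values, and that $0 \le f_i \le 1$ on $E \cap B_{R_i}(x)$ by the maximum principle. Extending $f_i$ by $1$ on $E \setminus B_{R_i}(x)$, we get Lipschitz functions on all of $E$. The key monotonicity is that $i \mapsto \int_E |\nabla f_i|^2$ is non-increasing: the Dirichlet integral of $f_i$ is the capacity of the condenser $(\partial E, E\setminus B_{R_i})$, and since $f_{i+1}$ is admissible (after cutting off) as a competitor for the variational problem defining the energy of $f_i$ — more precisely, $f_i$ minimizes Dirichlet energy among functions equal to $1$ on $\partial E$ and $0$ on $\partial B_{R_i}$, and one checks $\min\{f_{i+1}, \text{something}\}$ or uses the nested domains directly — the energies decrease. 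So $c := \lim_{i\to\infty}\int_E |\nabla f_i|^2 \ge 0$ exists.

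Next I would show $c = 0$ using parabolicity. Suppose $c > 0$. By the uniform energy bound and $0\le f_i \le 1$, a subsequence of $f_i$ converges weakly in $W^{1,2}_{\mathrm{loc}}(E)$ and locally uniformly (by elliptic estimates, the $f_i$ being harmonic with bounded values have locally uniform $C^{1,\alpha}$ bounds) to a harmonic function $f_\infty$ on $E$ with $f_\infty|_{\partial E} \equiv 1$ and $0 \le f_\infty \le 1$. Lower semicontinuity of the Dirichlet energy under weak convergence gives $\int_E |\nabla f_\infty|^2 \le c$. If $f_\infty \equiv 1$ then $\int_E |\nabla f_\infty|^2 = 0$; I then need to rule this out when $c>0$, which requires upgrading weak convergence of gradients to strong convergence. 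For that I would run the standard argument: test the equation for $f_i - f_j$ (extended suitably) and use that $f_i$ is energy-minimizing for its boundary data to conclude $\int_E |\nabla(f_i - f_j)|^2 = \big|\int_E |\nabla f_i|^2 - \int_E|\nabla f_j|^2\big| \to 0$ as $i,j\to\infty$, so $\{f_i\}$ is Cauchy in Dirichlet energy; hence $\nabla f_i \to \nabla f_\infty$ strongly in $L^2(E)$ and $\int_E |\nabla f_\infty|^2 = c$. Thus if $c > 0$ then $f_\infty \not\equiv 1$, so $f_\infty$ is a positive harmonic function on $E$ with $f_\infty|_{\partial E}\equiv 1$ and $f_\infty < 1$ somewhere — and by the strong maximum principle $f_\infty < 1$ throughout $E$ — contradicting parabolicity. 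Hence $c = 0$, which gives $\lim_i \int_E |\nabla f_i|^2 = 0$. Uniform convergence of $f_i \to 1$ on compact subsets then follows from the local uniform convergence to $f_\infty \equiv 1$ (once $c=0$, every subsequential limit is the constant $1$, so the whole sequence converges to $1$ locally uniformly).

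\textbf{Main obstacle.} The delicate point is the passage from the uniform energy bound to the conclusion $f_\infty \equiv 1$: weak $W^{1,2}_{\mathrm{loc}}$ convergence alone only gives $\int |\nabla f_\infty|^2 \le c$ and a harmonic $f_\infty$ with the right boundary data, but a nonconstant bounded harmonic function with these boundary values is exactly what parabolicity forbids, so one must either (a) establish strong $L^2$ convergence of the gradients via the Cauchy-in-energy argument above (the energy-minimizing property of $f_i$ makes $\int|\nabla(f_i-f_j)|^2$ telescope into the difference of energies), or (b) directly argue that $c>0$ forces $f_\infty$ nonconstant by a quantitative estimate. Approach (a) is cleanest and is the step I would write out carefully; everything else (existence, maximum principle bounds, monotonicity of energies, elliptic compactness) is routine.
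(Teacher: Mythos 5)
Your argument is correct in substance, but it reaches the energy conclusion by a different (and longer) route than the paper. Both proofs share the first half: elliptic compactness plus the maximum principle produce a subsequential locally smooth limit $f_\infty$ that is harmonic with $0\le f_\infty\le 1$ and $f_\infty|_{\partial E}\equiv 1$, and parabolicity forces $f_\infty\equiv 1$ (in the paper this is run directly, in your version via the contradiction with $c>0$; either way the point is that a nonconstant such limit would be a positive harmonic function witnessing nonparabolicity). Where you diverge is the deduction of $\int_E|\nabla f_i|^2\to 0$: you set up the capacity framework --- monotonicity of the Dirichlet integrals, the telescoping identity $\int|\nabla(f_j-\tilde f_i)|^2=\int|\nabla f_i|^2-\int|\nabla f_j|^2$ for $j>i$, hence Cauchy-in-energy and strong $L^2$ convergence of the gradients, so the limit energy equals the energy of the limit, which is $0$. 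The paper instead integrates by parts once: since $f_i=0$ on $\partial B_{R_i}(x)$ and $f_i=1$ on $\partial E$, one has $\int_E|\nabla f_i|^2=-\int_{\partial E}f_i\,\nabla_\nu f_i=-\int_{\partial E}\nabla_\nu f_i$, and this tends to $0$ because $f_i\to 1$ in $C^1$ up to the compact boundary $\partial E$. The paper's computation is shorter and avoids the monotonicity and strong-convergence machinery entirely; your capacity argument is more robust (it is the standard potential-theoretic proof and does not need boundary regularity of the convergence, only interior compactness plus lower semicontinuity and the energy identity). Two small points to fix if you write it out: the competitor extension of $f_i$ to all of $E$ must be by $0$ on $E\setminus B_{R_i}(x)$ (not by $1$, which would be discontinuous across $\partial B_{R_i}(x)$ where $f_i$ vanishes), and the monotonicity of the energies should be justified by exactly this extension being admissible for the variational problem on the larger annulus, rather than by truncating $f_{i+1}$.
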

\begin{proof}
We follow \cite[Theorem 10.1]{Li:harmonic.lectures}. The maximum principle guarantees that $0\leq f_{i}\leq 1$. Thus, up to a subsequence, $f_{i}$ limits to $f$ locally smoothly, with $\Delta f=0$, $0 \leq f \leq 1$, and $f|_{\partial E} = 1$. By parabolicity,  $f(x) = 1$ for some $x \in E$, so $f\equiv 1$ by the maximum principle. Thus, we see that $f_{i}$ limits locally smoothly to $1$ on $E$. Finally, integrating by parts, we see that
\[
\int_{E} |\nabla f_{i}|^{2} = - \int_{\partial E} f_{i} \nabla_{\nu} f_{i} \to 0.
\]
This completes the proof. 
\end{proof}

We now discuss when nonparabolicity is inherited by subsets.

\begin{proposition}\label{prop:nonparabolicends}
Let $M$ be a complete Riemannian manifold. Let $K \subset \tilde{K} \subset M$ be compact subsets of $M$ with smooth boundary. Let $E \subset M$ be an unbounded component of $M \setminus K$. If $E$ is nonparabolic, then there is a nonparabolic unbounded component $\tilde{E}$ of $E \setminus \tilde{K}$. 
\end{proposition}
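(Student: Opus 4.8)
The plan is to argue by contradiction using the variational characterization of nonparabolicity via capacity, or equivalently, to directly construct on a putative-parabolic collection of components a positive harmonic function that certifies nonparabolicity of $E$. Here is the cleaner route. Since $E$ is nonparabolic, there is a positive harmonic function $u$ on $E$ with $u|_{\partial E}\equiv 1$ and $u|_E<1$; concretely, $u = \lim_i u_i$ where $u_i$ is harmonic on $E\cap B_{R_i}(x)$ with $u_i|_{\partial E}\equiv 1$, $u_i|_{\partial B_{R_i}}\equiv 0$, and $\lim_i \int_E |\nabla u_i|^2 > 0$ (this strict positivity is exactly what fails in the parabolic case by Proposition~\ref{prop:parabolicends}, and it persists in the limit by lower semicontinuity of the Dirichlet energy together with the fact that $u$ is nonconstant). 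Now let $E \setminus \tilde K = \bigsqcup_j \tilde E_j$ be the decomposition into connected components (finitely many unbounded ones, since $\partial\tilde K$ is compact and smooth). The key claim is that at least one unbounded $\tilde E_j$ is nonparabolic.

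First I would establish the following comparison: if \emph{every} unbounded component $\tilde E_j$ of $E\setminus\tilde K$ were parabolic, then $E$ itself would be parabolic, contradicting the hypothesis. To see this, one builds a sequence of competitors $w_i$ supported in $E\cap B_{R_i}$, equal to $1$ on $\partial E$, equal to $0$ on $\partial B_{R_i}$, with $\int_E |\nabla w_i|^2 \to 0$, which forces the harmonic functions $u_i$ (energy minimizers with those boundary data) to have $\int_E|\nabla u_i|^2 \to 0$ as well, i.e.\ $E$ is parabolic. On the compact piece $E\cap\tilde K$ take $w_i\equiv$ (any fixed smooth function that is $1$ near $\partial E$), paying only a bounded, $i$-independent energy cost $C_0$ — but this is not yet $o(1)$, so one must be more careful: instead set $w_i = 1$ on a \emph{fixed} compact set $E\cap B_{\rho}(x) \supset E\cap \tilde K$ and on each unbounded $\tilde E_j$ interpolate using the parabolic cutoff functions $\phi_{j,i}$ provided by Proposition~\ref{prop:parabolicends} applied to $\tilde E_j$ (these satisfy $\phi_{j,i}|_{\partial\tilde E_j}\equiv 1$, $\phi_{j,i}|_{\partial B_{R_i}}\equiv 0$, $\int_{\tilde E_j}|\nabla\phi_{j,i}|^2\to 0$); one also needs the bounded (in $i$) unbounded-but-precompact-in-$E$ business handled on the finitely many \emph{bounded} components of $E\setminus\tilde K$ by just setting $w_i\equiv 1$ there at finite total cost which, combined with the fixed-compact-set trick, can be absorbed since we only need the energy \emph{outside} a fixed compact set to go to zero — and indeed the standard definition of parabolicity for $E$ (via Proposition~\ref{prop:parabolicends}) only tests the limiting energy $\lim_i \int_E |\nabla f_i|^2$, where $f_i$ is the \emph{harmonic} extension, and by energy minimality $\int_E |\nabla f_i|^2 \le \int_E |\nabla w_i|^2 = C_0 + \sum_j \int_{\tilde E_j}|\nabla\phi_{j,i}|^2 + (\text{bdd components}) $. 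Since the last two terms are $o(1)+O(1)$, this only gives a bound, not decay; the fix is to note that parabolicity of $E$ is equivalent to the vanishing of the capacity of $\partial E$ in $E$, which is subadditive and insensitive to adding compact pieces, so $\mathrm{cap}_E(\partial E) \le \mathrm{cap}_{E\cap B_\rho}(\partial E, \partial B_\rho) + \sum_j \mathrm{cap}_{\tilde E_j}(\partial \tilde E_j)$; but the capacity of a compact annular region across which one can escape to infinity only through parabolic ends is zero, giving $\mathrm{cap}_E(\partial E)=0$, i.e.\ $E$ parabolic — contradiction.

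Therefore some unbounded component $\tilde E_{j_0}$ is nonparabolic, and we set $\tilde E := \tilde E_{j_0}$; it is an unbounded component of $E\setminus\tilde K$, which is what we wanted. The \textbf{main obstacle} I anticipate is the bookkeeping in the capacity/energy comparison: one must be careful that the ``gluing'' of cutoff functions across $\partial \tilde K$ does not introduce uncontrolled energy (it does not, because $\partial\tilde K$ is smooth and compact so the interpolation region has bounded geometry and the functions being glued agree in value there), and one must use the right equivalent formulation of parabolicity — either via vanishing capacity or via the limiting-energy characterization of Proposition~\ref{prop:parabolicends} — so that ``parabolic everywhere at infinity'' genuinely propagates to ``parabolic globally.'' A slicker alternative, which I would present if the capacity machinery feels too heavy, is contrapositive-free and direct: assume all unbounded $\tilde E_j$ are parabolic; then the Green's function (or the limit $u$) on $E$ restricted to each $\tilde E_j$ is a bounded harmonic function with finite Dirichlet energy on a parabolic end, hence constant on each $\tilde E_j$ by the parabolic Liouville property; patching these constants with the value on the compact region $E\cap\tilde K$ and using the maximum principle forces $u$ to be constant on all of $E$, contradicting $u|_{\partial E}=1$, $u|_E<1$. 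This second argument is cleaner and is the one I would actually write, citing the standard fact (e.g.\ \cite{Li:harmonic.lectures}) that a positive harmonic function of finite energy on a parabolic end is constant.
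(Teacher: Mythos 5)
Your overall strategy (prove the contrapositive: if every unbounded component of $E\setminus\tilde K$ were parabolic, then $E$ would be parabolic) is a legitimate alternative to the paper's argument, but both of the concrete routes you give contain errors. The ``slicker alternative'' that you say you would actually write rests on a false lemma: a bounded harmonic function of finite Dirichlet energy on a parabolic end (a manifold \emph{with boundary}) need not be constant. For example, $h(t,\theta)=e^{-t}\cos\theta$ on the flat half-cylinder $[0,\infty)\times S^1$, which is parabolic, is harmonic, bounded, has finite energy, and is non-constant. The parabolic Liouville-type statements in \cite{Li:harmonic.lectures} control the behavior of such functions \emph{at infinity} (a maximum principle at infinity), not on the whole end, so restricting $u$ to the $\tilde E_j$ does not force it to be constant there, and the subsequent ``patching of constants'' collapses. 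Your first route has the right skeleton but the step you ultimately lean on is also wrong: in the inequality $\mathrm{cap}_E(\partial E)\le \mathrm{cap}_{E\cap B_\rho}(\partial E,\partial B_\rho)+\sum_j\mathrm{cap}_{\tilde E_j}(\partial\tilde E_j)$, the first term on the right is the capacity of a \emph{compact} condenser, which is in general strictly positive, so the inequality only bounds $\mathrm{cap}_E(\partial E)$ by a positive constant and does not yield parabolicity. The repair is the one you half-state and then abandon: take the competitor $w_i\equiv 1$ on $E\cap\tilde K$ and on the (finitely many) bounded components of $E\setminus\tilde K$, and equal to the parabolic cutoffs $\phi_{j,i}$ on each unbounded $\tilde E_j$. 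Since $w_i$ is constant on the compact part, it contributes \emph{zero} energy there (there is no cost $C_0$ at all), the functions match on $\partial\tilde K$, and the total energy is $\sum_j\int_{\tilde E_j}|\nabla\phi_{j,i}|^2\to 0$; by energy minimality of the harmonic extensions this shows $E$ is parabolic. With that correction, and a citation for the standard equivalence between parabolicity and the vanishing of $\lim_i\int_E|\nabla f_i|^2$ (the converse direction of Proposition \ref{prop:parabolicends} also needs a one-line maximum-principle argument), the contrapositive proof goes through.

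For comparison, the paper argues directly rather than by contrapositive: it normalizes the harmonic function $f$ witnessing nonparabolicity of $E$ so that $\lim_{r\to\infty}\inf_{\partial B_r(x)\cap E}f=0$, selects the (necessarily existing, by finiteness of the components) unbounded component $\tilde E$ along which this decay persists, and shows $\tilde E$ is nonparabolic via the barrier comparison $\tilde f_i\le c^{-1}f$ with $c=\inf_{\partial\tilde E}f>0$. That route avoids any capacity or gluing bookkeeping and identifies the nonparabolic component explicitly; your (corrected) route is more robust in spirit but requires the energy characterization of parabolicity in both directions.
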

\begin{proof}
Since $E$ is nonparabolic, there is a positive harmonic function $f$ on $E$ such that
\[  f|_{\partial E}\ \equiv 1 \ \ \text{and}\ \ f|_E\ < 1. \]
By addition and scaling, we can assume without loss of generality that
\[ \lim_{r \to \infty} \inf_{\partial B_r(x) \cap E} f = 0. \]
Since $\tilde{K}$ is compact with smooth boundary, $\partial \tilde{K}$ is a disjoint union of finitely many smooth closed surfaces. Since $E$ is connected, the boundary of each component of $E \setminus \tilde{K}$ is a union of at least one of the components of $\partial \tilde{K}$. Since distinct components of $E \setminus \tilde{K}$ have disjoint boundaries, the number of such components is finite.
Hence, there is an unbounded component $\tilde{E}$ of $E \setminus \tilde{K}$ such that
\[ \lim_{r \to \infty} \inf_{\partial B_r(x) \cap \tilde{E}} f = 0. \]
We show that $\tilde{E}$ is nonparabolic.

We take a sequence of harmonic functions $\tilde{f}_i$ on $\tilde{E} \cap B_{R_i}(x)$ satisfying
\[ \tilde{f}_i|_{\partial \tilde{E}} \ \equiv 1 \ \ \text{and}\ \ \tilde{f}_i|_{\partial B_{R_i}(x)} \equiv 0, \]
where $R_i \to \infty$ such that $\tilde{K} \subset B_{R_i}(x)$ for all $i$. Then $\tilde{f}_i$ converges locally uniformly to a positive harmonic function $\tilde{f}$ on $\tilde{E}$ with
\[ \tilde{f} |_{\partial \tilde{E}}\ \equiv 1 \ \ \text{and}\ \ \tilde{f}|_{\tilde{E}}\ \leq 1. \]
Let $c = \inf_{\partial \tilde{E}} f > 0$. Then $\tilde{f}_i \leq c^{-1}f$ on $\tilde{E}\cap B_{R_i}(x)$ for all $i$ by the maximum principle, which implies $\tilde{f} \leq c^{-1}f$. In particular,
\[ \lim_{r \to \infty} \inf_{\partial B_r(x) \cap \tilde{E}} \tilde{f} = 0, \]
so $\tilde{f} \not\equiv 1$. Hence, $\tilde{f}|_{\tilde{E}} < 1$ by the maximum principle, so $\tilde{E}$ is nonparabolic.
\end{proof}

By Proposition \ref{prop:nonparabolicends}, it makes sense to define nonparabolicity for an end.

\begin{definition}\label{def:par_ends}
An end $\{E_k\}$ adapted to $x$ with length scale $L$ is \emph{nonparabolic} if there exist connected open sets $F_k\subset E_k$ with smooth boundary such that
\[ E_k \setminus \overline{B}_{(k+1)L}(x) \subset F_k \subset E_k \]
and $F_k$ is nonparabolic for all $k$ sufficiently large.
\end{definition}

\section{Nonparabolic Ends of Stable Hypersurfaces in 4-Manifolds with $\Ric_{2}\geq 0$}\label{sec:non.par.stab}
In this section, we observe\footnote{One may observe that the same result holds (with the same proof) for $M^{n-1}\to (X^{n},g)$ when $(X^n,g)$ has $\Ric_{n-2}^g\geq 0$ (meaning that if $\Pi_{1},\dots,\Pi_{n-2}$ is a set of planes in $T_{p}X$ meeting pairwise orthogonally along a fixed line, then $\sum_{i=1}^{n-2}\text{sec}_g(\Pi_{i})\geq 0$).} that a minor modification of the arguments used in \cite{LiWang:ends.stable.minimal}, \cite[\S 11]{Li:harmonic.lectures} restricts the number of nonparabolic ends of a stable hypersurface in a 4-manifold with $\Ric_{2}\geq 0$.

\begin{theorem}\label{thm:onenonparabolic}
Let $(X^4, g)$ be a complete 4-manifold with $\Ric_2^g\geq 0$. Let $M^3 \to (X^4, g)$ be a complete two-sided stable minimal immersion. Let $K \subset M$ be a compact subset of $M$ with smooth boundary. Then $M \setminus K$ admits at most one nonparabolic unbounded component. In particular, $M$ has at most one nonparabolic end.
\end{theorem}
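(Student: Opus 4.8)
The plan is to adapt the classical Schoen--Yau / Cao--Shen--Zhu style argument bounding the number of nonparabolic ends of a stable minimal hypersurface via harmonic functions and the stability inequality. Suppose toward a contradiction that $M \setminus K$ has two distinct nonparabolic unbounded components $E_1, E_2$. The first step is to produce a bounded nonconstant harmonic function $u$ on $M$ with finite Dirichlet energy and nonconstant boundary behavior at infinity along $E_1$ versus $E_2$. Concretely, using nonparabolicity, on each $E_j$ one solves the exterior Dirichlet problems on $E_j \cap B_{R_i}(x)$ with boundary data $1$ on $\partial E_j$ and $0$ on $\partial B_{R_i}(x)$, extracts a limit $f_j$ which is a positive harmonic function with $f_j|_{\partial E_j}\equiv 1$, $f_j<1$, and (after normalizing as in the proof of Proposition~\ref{prop:nonparabolicends}) with $\inf_{\partial B_r \cap E_j} f_j \to 0$; one then glues these using a capacitary / Dirichlet-minimization argument to produce a harmonic function $u$ on $M$ with finite Dirichlet energy $\int_M |\nabla u|^2 < \infty$ that is asymptotically $1$ along $E_1$ and $0$ along $E_2$ (hence nonconstant). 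This is exactly the construction in \cite[\S 11]{Li:harmonic.lectures} and \cite{LiWang:ends.stable.minimal}; I would cite it rather than redo it.

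The second step is the Bochner--type estimate. For a harmonic function $u$ on $M^3$, the refined Kato inequality (valid in dimension $n=3$, where it reads $|\nabla|\nabla u||^2 \leq \tfrac{n-1}{n}|\nabla^2 u|^2 = \tfrac23 |\nabla^2 u|^2$) combined with Bochner's formula $\tfrac12\Delta|\nabla u|^2 = |\nabla^2 u|^2 + \Ric_M(\nabla u,\nabla u)$ yields, away from the zero set of $\nabla u$ and writing $\phi = |\nabla u|$,
\[
\phi \Delta \phi + \tfrac12|\nabla\phi|^2 \geq -\Ric_M(\nabla u,\nabla u) = -\Ric_M\!\left(\tfrac{\nabla u}{|\nabla u|},\tfrac{\nabla u}{|\nabla u|}\right)\phi^2.
\]
Now invoke Lemma~\ref{lem:k2+}: since $(X^4,g)$ has $\Ric_2^g \geq 0$ and $M$ is minimal, $\Ric_M \geq -|A_M|^2$, so $-\Ric_M(\nu_{\nabla u},\nu_{\nabla u}) \leq |A_M|^2$. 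Feeding this into the stability inequality $\int_M (|A_M|^2 + \Ric_g(\nu,\nu))\varphi^2 \leq \int_M |\nabla\varphi|^2$ with the test function $\varphi = \phi \psi$ (for cutoffs $\psi$) and using $\Ric_g(\nu,\nu)\geq 0$ gives, after the standard integration by parts and absorbing the $\tfrac12|\nabla\phi|^2$ term, an inequality of the form $\int_M |\nabla\phi|^2\psi^2 \leq C\int_M \phi^2 |\nabla\psi|^2$ — i.e.\ $\phi=|\nabla u|$ satisfies a reverse-Poincaré / stability-type inequality. Since $\int_M|\nabla u|^2<\infty$, a cutoff argument (choosing $\psi$ supported on annuli) forces $|\nabla u|$ to be constant, and finiteness of energy on a complete noncompact manifold then forces $|\nabla u|\equiv 0$, so $u$ is constant — contradicting that $u$ separates $E_1$ from $E_2$. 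The "in particular" statement about ends follows by applying Definition~\ref{def:par_ends} and Proposition~\ref{prop:nonparabolicends} (nonparabolicity of an end is detected on the sets $F_k$, and two nonparabolic ends would yield two nonparabolic components of $M\setminus K$ for suitable $K$).

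The main obstacle I anticipate is the boundary-term bookkeeping in step two: because $u$ is only defined as a limit with prescribed behavior on $\partial E_j$ and $K$, one must be careful that the integration by parts producing $\int_{\partial E}$ terms is controlled (these vanish in the limit by the energy bound, as in Proposition~\ref{prop:parabolicends}), and that the test functions $\phi\psi$ are genuinely admissible ($C^\infty_c$, or approximable by such) despite $\phi$ possibly vanishing on a set where it is not smooth — the standard fix is to work with $\sqrt{\phi^2+\eps^2}$ and let $\eps\to0$, and to note that the zero set of $\nabla u$ has measure zero (or use Stampacchia). A secondary subtlety is that $M$ may have infinitely many (parabolic) ends, so one should phrase the gluing/Dirichlet-minimization on all of $M$ rather than on a fixed pair of ends; this is handled in the cited references. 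Given that all the curvature input has been packaged into Lemma~\ref{lem:k2+} and the only ambient hypothesis used is $\Ric_2^g\geq 0$ (plus $\Ric_g(\nu,\nu)\geq0$, which also follows), the argument is essentially \cite{LiWang:ends.stable.minimal} with the Gauss-equation bound $\Ric_M\geq -|A_M|^2$ substituted at the key step, so I would present it concisely and defer the routine analytic estimates to those papers.
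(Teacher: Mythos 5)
Your overall strategy coincides with the paper's: build a nonconstant finite-energy harmonic function from two nonparabolic components (Lemma \ref{lem:twononparabolic}), then run stability against Bochner with the refined Kato inequality and the Gauss-equation curvature bound. However, there is a genuine gap in your endgame, and it traces back to which Gauss-equation bound you use. You invoke the crude estimate $\Ric_M\geq -|A_M|^2$ from Lemma \ref{lem:k2+}. Plugged into the Bochner/stability computation, the $|A_M|^2$ terms then cancel \emph{exactly} (coefficient $1$ from stability versus coefficient $1$ from Bochner), and all you retain is $\tfrac12\int\psi^2|\nabla|\nabla u||^2\leq\int|\nabla u|^2|\nabla\psi|^2$, hence only that $|\nabla u|$ is constant. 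The paper instead proves (Lemma \ref{lem:schoenyau}) the sharper pointwise bound $\Ric_M(\nabla u,\nabla u)\geq -\tfrac23|A_M|^2|\nabla u|^2$, obtained by combining the Gauss equation with minimality and Cauchy--Schwarz ($|A_M|^2\geq\tfrac32\sum_jA_M(e_1,e_j)^2$). That $\tfrac23$ leaves a leftover $\tfrac13\int\phi^2|A_M|^2|\nabla u|^2$ on the good side, so the paper additionally concludes $|A_M|^2|\nabla u|^2\equiv 0$, hence $A_M\equiv 0$ (since $|\nabla u|$ is a nonzero constant), hence $\Ric_M\geq 0$ by Lemma \ref{lem:k2+}, and then Cheeger--Gromoll splitting gives $\vol(M)=\infty$, contradicting $|\nabla u|^2\vol(M)=\int_M|\nabla u|^2<\infty$.

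Your substitute for this endgame --- ``finiteness of energy on a complete noncompact manifold then forces $|\nabla u|\equiv 0$'' --- is false as stated: a complete noncompact manifold can have finite volume (e.g.\ a cusp), in which case a nonzero constant $|\nabla u|$ is compatible with finite Dirichlet energy. The step can be repaired without the sharper coefficient by observing that a nonparabolic end necessarily has infinite volume (finite volume implies parabolicity), so $\vol(M)=\infty$ and the contradiction goes through; but as written the justification is missing, and you cannot fall back on the paper's route because your version of the inequality has forfeited the conclusion $A_M\equiv 0$. A secondary, minor point: your displayed rearrangement of Bochner has the wrong sign on the $\tfrac12|\nabla\phi|^2$ term; the correct form is $\phi\Delta\phi-\tfrac12|\nabla\phi|^2\geq \Ric_M(\nabla u,\nabla u)$, which is what your subsequent absorption step actually uses. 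The remaining ingredients (handling the nonsmooth set of $|\nabla u|$ via Rademacher or regularization, reducing two nonparabolic ends to two nonparabolic components via Definition \ref{def:par_ends}) match the paper.
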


We give the proof in Section \ref{subsec:proof.thm.one.nonpar} after establishing some preliminary results. 
\subsection{Schoen-Yau inequality}
We use the following inequality of Schoen and Yau, which is known in non-negative sectional curvature. We show that the same proof works under the weaker assumption $\Ric_{2}\geq 0$. We include the proof for completeness.

\begin{lemma}[\cite{SY:harmonic.stable.minimal}]\label{lem:schoenyau}
Let $(X^4, g)$ be a complete 4-manifold satisfying $\Ric_{2}^g\geq 0$. Let $M^3 \to (X^4, g)$ be a complete two-sided stable minimal immersion. Let $u$ be a harmonic function on $M$. Then
\[ \frac{1}{3}\int_M \phi^2 |A_M|^2 |\nabla u|^2 + \frac{1}{2} \int_M \phi^2 |\nabla|\nabla u||^2 \leq \int_M |\nabla \phi|^2 |\nabla u|^2 \]
for any compactly supported, nonnegative $\phi \in W^{1,2}(M)$.
\end{lemma}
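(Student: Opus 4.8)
The plan is to mimic the classical Schoen--Yau argument, using the stability inequality with a test function built from $|\nabla u|$ and replacing the role of nonnegative sectional curvature by the weaker $\Ric_2^g\geq 0$ condition via Lemma \ref{lem:k2+} and a careful choice of frame. First I would record the Bochner formula for the harmonic function $u$ on $M$:
\[
\tfrac{1}{2}\Delta |\nabla u|^2 = |\nabla^2 u|^2 + \Ric_M(\nabla u,\nabla u),
\]
using $\Delta u = 0$. At a point where $\nabla u\neq 0$, the standard refined Kato inequality for harmonic functions in dimension $3$ gives $|\nabla^2 u|^2 \geq \tfrac{3}{2}|\nabla |\nabla u||^2$ (the improvement over the trivial Kato inequality comes from $\Delta u = 0$; in dimension $n$ the constant is $\tfrac{n}{n-1}$, which is $\tfrac{3}{2}$ here). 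Combining, and writing $v = |\nabla u|$, I get the differential inequality $v\Delta v \geq \tfrac{1}{2}|\nabla v|^2 + \Ric_M(\nabla u,\nabla u)$ in the weak sense across the (measure-zero, by unique continuation) critical set, which one justifies by the usual $\eps$-regularization $v_\eps = \sqrt{|\nabla u|^2+\eps^2}$ and letting $\eps\to 0$.

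Next I would feed $\phi^2 v$ (or more precisely $\phi^2 v_\eps$) into a Rellich--Pohozaev-type integration by parts against the above inequality: multiplying by $\phi^2$ and integrating,
\[
\int_M \phi^2 v\Delta v \geq \tfrac{1}{2}\int_M \phi^2 |\nabla v|^2 + \int_M \phi^2 \Ric_M(\nabla u,\nabla u),
\]
and integrating the left side by parts yields
\[
-\int_M \phi^2|\nabla v|^2 - 2\int_M \phi v\,\nabla\phi\cdot\nabla v \geq \tfrac{1}{2}\int_M \phi^2|\nabla v|^2 + \int_M \phi^2\Ric_M(\nabla u,\nabla u).
\]
On the other hand, the stability inequality applied to the Lipschitz, compactly supported test function $\phi v$ (again via $\phi v_\eps$, then $\eps\to 0$) gives
\[
\int_M (|A_M|^2 + \Ric_g(\nu,\nu))\phi^2 v^2 \leq \int_M |\nabla(\phi v)|^2 = \int_M \phi^2|\nabla v|^2 + 2\int_M \phi v\,\nabla\phi\cdot\nabla v + \int_M v^2|\nabla\phi|^2.
\]
Adding the two displays cancels the indefinite cross term $2\int \phi v\,\nabla\phi\cdot\nabla v$, leaving
\[
\int_M (|A_M|^2+\Ric_g(\nu,\nu))\phi^2 v^2 + \tfrac{3}{2}\int_M\phi^2|\nabla v|^2 + \int_M\phi^2\Ric_M(\nabla u,\nabla u) \leq \int_M v^2|\nabla\phi|^2.
\]

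Finally I would absorb the curvature terms. By the traced Gauss equation and $\Ric_2^g\geq 0$ (exactly as in Lemma \ref{lem:k2+}, but keeping the relevant term) one has, choosing at each point an orthonormal frame diagonalizing $A_M$ and with $\nabla u$ a unit eigendirection, $\Ric_M(\nabla u,\nabla u) \geq -\sum_j A_M(e_1,e_j)^2 + R_g(\nu\text{-terms})$; more to the point, combining $\Ric_g(\nu,\nu) = \sum_{i=1}^3 R_g(e_i,\nu,\nu,e_i)\geq -\tfrac{1}{2}(\text{something})$ with the Gauss equation one obtains the pointwise bound
\[
|A_M|^2 v^2 + \Ric_g(\nu,\nu) v^2 + \Ric_M(\nabla u,\nabla u) \geq \tfrac{2}{3}|A_M|^2 v^2
\]
— the numerology here is the standard Schoen--Yau bookkeeping in which the $\Ric_2^g\geq 0$ hypothesis is used to discard exactly the ambient curvature, and one checks that the terms $A_M(e_1,e_j)^2$ lost to Kato recombine to leave a definite fraction $\tfrac{1}{3}|A_M|^2$ of the second fundamental form. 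Plugging this in gives
\[
\tfrac{1}{3}\int_M\phi^2|A_M|^2 v^2 + \tfrac{1}{2}\int_M\phi^2|\nabla v|^2 \leq \int_M v^2|\nabla\phi|^2,
\]
which is the claim. The main obstacle is the curvature accounting in this last step: one must verify that the refined Kato constant $\tfrac{3}{2}$, the coefficient from the Gauss equation, and the $\Ric_2^g\geq 0$ hypothesis fit together to yield a \emph{positive} multiple of $|A_M|^2$ rather than a term of indefinite sign — this is precisely where the dimension $n=4$ (i.e.\ $M^3$) and the intermediate curvature condition are essential, and it is the step requiring the most care with orthonormal frames and the sign conventions fixed in Section \ref{sec:curv}. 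A secondary technical point is the justification of all integrations by parts through the critical set of $u$ via the $v_\eps$ regularization and the fact that $\{\nabla u = 0\}$ has measure zero.
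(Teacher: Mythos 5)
Your proposal follows the same route as the paper: plug $\phi|\nabla u|$ into the stability inequality, use the Bochner formula with the refined Kato inequality $|\mathrm{Hess}_M u|^2\geq\tfrac32|\nabla|\nabla u||^2$, and control $\Ric_M(\nabla u,\nabla u)$ via the Gauss equation, $\Ric_2^g\geq 0$, and minimality. Your presentation (integrating the Bochner inequality by parts separately and adding it to the expanded stability inequality) is just a repackaging of substituting Bochner directly into the stability inequality; the cross terms cancel identically either way. The $v_\eps$ regularization across the critical set is a legitimate alternative to the paper's Rademacher argument.

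Two bookkeeping errors and one deferred step deserve attention. First, your combined display should carry the coefficient $\tfrac12$ on $\int_M\phi^2|\nabla v|^2$, not $\tfrac32$: from $v\Delta v\geq\tfrac12|\nabla v|^2+\Ric_M(\nabla u,\nabla u)$ one gets $2\int\phi v\,\nabla\phi\cdot\nabla v\leq-\tfrac32\int\phi^2|\nabla v|^2-\int\phi^2\Ric_M(\nabla u,\nabla u)$, and substituting into the stability inequality leaves $\int\phi^2|\nabla v|^2-\tfrac32\int\phi^2|\nabla v|^2=-\tfrac12\int\phi^2|\nabla v|^2$ on the right. Second, the pointwise curvature bound is $|A_M|^2v^2+\Ric_g(\nu,\nu)v^2+\Ric_M(\nabla u,\nabla u)\geq\tfrac13|A_M|^2v^2$, not $\tfrac23$, since $\Ric_g(\nu,\nu)\geq 0$ and $\Ric_M(\nabla u,\nabla u)\geq-\tfrac23|A_M|^2v^2$. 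As written, your stated conclusion does not follow from your stated intermediate displays (they would give the stronger, unproven inequality $\tfrac23\int\phi^2|A_M|^2v^2+\tfrac32\int\phi^2|\nabla v|^2\leq\int v^2|\nabla\phi|^2$); with both coefficients corrected the chain closes exactly to the claimed $\tfrac13$ and $\tfrac12$. Finally, the ``standard Schoen--Yau bookkeeping'' you defer is the actual content of the lemma and should be carried out: with $e_1=\nabla u/|\nabla u|$, minimality gives $A_M(e_1,e_1)=-(A_M(e_2,e_2)+A_M(e_3,e_3))$, so Cauchy--Schwarz yields $\sum_{j=2}^3 A_M(e_j,e_j)^2\geq\tfrac12A_M(e_1,e_1)^2$ and hence $|A_M|^2\geq\tfrac32\sum_{j=1}^3A_M(e_1,e_j)^2$; combined with the traced Gauss equation and $\Ric_2^g\geq 0$ this is precisely what produces $\Ric_M(\nabla u,\nabla u)\geq-\tfrac23|A_M|^2|\nabla u|^2$ and the positive residual $\tfrac13|A_M|^2$.
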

\begin{proof}
We choose a favorable test function in the stability inequality, combined with a rearrangement of the Bochner formula. Below, we assume that $u$ is not a constant function (otherwise the desired inequality is trivial). 

The favorable test function is $\psi := \phi |\nabla u|$, where $\phi$ is a compactly supported, nonnegative function in $W^{1,2}(M)$. Since $\Ric_2^g \geq 0$ implies non-negative Ricci curvature, the stability inequality gives
\begin{align}\label{eq:stability}
\int_M \phi^2 |A_M|^2 |\nabla u|^2
& \leq \int_M |\nabla \phi|^2|\nabla u|^2 + 2\int_M \phi |\nabla u| \langle \nabla \phi, \nabla |\nabla u|\rangle + \int_M \phi^2 |\nabla |\nabla u||^2\\
& =  \int_M |\nabla \phi|^2|\nabla u|^2 + \frac 1 2\int_M \langle \nabla \phi^2, \nabla |\nabla u|^2 \rangle + \int_M \phi^2 |\nabla |\nabla u||^2\\
& = \int_M |\nabla \phi|^2|\nabla u|^2 - \int_M \phi^2\left(  \frac 12  \Delta |\nabla u|^2 - |\nabla |\nabla u||^2 \right) . \notag
\end{align}
where the divergence theorem is used in the third equality.

We now rearrange the Bochner formula. Recall that the usual Bochner formula gives
\begin{equation} \label{eq:bochner}
\frac 12 \Delta |\nabla u|^2 = \mathrm{Ric}_M(\nabla u, \nabla u) + |\mathrm{Hess}_Mu|^2.
\end{equation}


For now, consider a point $p$ where $\nabla u \neq 0$. Choose a local orthonormal frame around $p$ so that $e_1 = \frac{\nabla u}{|\nabla u|}$. The improved Kato inequality (cf.\ \cite[Lemma 7.2]{Li:harmonic.lectures}) yields
\begin{equation}\label{eq:hessterm}
|\mathrm{Hess}_Mu|^2 \geq \frac{3}{2} |\nabla|\nabla u||^2.
\end{equation}
Now we rearrange the Ricci curvature term. The Gauss equation and $\Ric_{2}\geq 0$ give
\begin{equation}\label{eq:gauss}
\mathrm{Ric}_M(e_1, e_1) = \sum_{j=2}^{3} R(e_j, e_1, e_1, e_j) - \sum_{j=1}^{3} A(e_1, e_j)^2
\geq - \sum_{j=1}^3 A(e_1, e_j)^2.
\end{equation}
On the other hand, Cauchy-Schwarz and minimality give
\begin{align}\label{eq:normA}
|A_M|^2
& \geq A_M(e_1, e_1)^2 + 2\sum_{j = 2}^{3} A_M(e_1, e_j)^2 + \sum_{j =2}^{3} A_M(e_j, e_j)^2\\
& \geq A_M(e_1, e_1)^2 + 2\sum_{j =2}^{3} A_M(e_1, e_j)^2 + \frac{1}{2}\left(\sum_{j=2}^{3} A_M(e_j, e_j)\right)^2\notag\\
& = A_M(e_1, e_1)^2 + 2\sum_{j =2}^{3} A_M(e_1, e_j)^2 + \frac{1}{2}A_M(e_1, e_1)^2\notag\\
& \geq \frac{3}{2}\sum_{j=1}^{3} A_M(e_1, e_j)^2.\notag
\end{align}
Together, (\ref{eq:gauss}) and (\ref{eq:normA}) give
\begin{equation}\label{eq:ricterm}
\mathrm{Ric}_{M}(\nabla u, \nabla u) \geq -\frac{2}{3}|A_M|^2|\nabla u|^2.
\end{equation}
Hence, (\ref{eq:bochner}), (\ref{eq:hessterm}), (\ref{eq:ricterm}) give the rearrangement
\begin{equation}\label{eq:rearrangebochner}
\frac{1}{2}\Delta |\nabla u|^2 \geq -\frac{2}{3}|A_M|^2|\nabla u|^2 + \frac{3}{2}|\nabla |\nabla u||^2,
\end{equation}
so
\begin{equation}\label{eq:final-boch}
\frac 12 \Delta |\nabla u|^2 -  |\nabla |\nabla u||^2 \geq -\frac{2}{3}|A_M|^2|\nabla u|^2 + \frac{1}{2} |\nabla |\nabla u||^2. 
\end{equation}
We derived \eqref{eq:final-boch} under the assumption $\nabla u \neq 0$ at the given point. 

Now suppose that $|\nabla u|$ vanishes at $p$ but $x\mapsto |\nabla u|(x)$ is differentiable at $p$. In this case, the right hand side of \eqref{eq:final-boch} is $=0$ (since $p$ is a local minimum of $|\nabla u|$). On the other hand, the left-hand side is $\geq 0$ since $p$ is a local minimum of the smooth function $|\nabla u|^2$. In sum, we find that that \eqref{eq:final-boch} holds anytime that $|\nabla u|$ is differentiable, which is almost everywhere (by Rademacher's theorem). Thus, we can use  \eqref{eq:final-boch} in \eqref{eq:stability} to complete the proof.
\end{proof}

\subsection{Multiple nonparabolic ends}
Under the assumption of multiple nonparabolic ends with respect to a fixed compact set, we produce a non-constant harmonic function.

\begin{lemma}[{cf.\ \cite[Theorem 4.3]{Li:harmonic.lectures}}]\label{lem:twononparabolic}
Let $M$ be a complete Riemannian manifold. Let $K \subset M$ be a compact subset of $M$ with smooth boundary. Suppose $M \setminus K$ has at least two nonparabolic unbounded components. Then there exists a non-constant harmonic function with finite Dirichlet energy on $M$.
\end{lemma}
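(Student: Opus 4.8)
The plan is to carry out the Li--Tam construction, as in \cite[Theorem 4.3]{Li:harmonic.lectures}, adapted to the present setup. Enlarging $K$ and applying Proposition \ref{prop:nonparabolicends} (to retain two nonparabolic unbounded components), I may assume that $\partial K$ is smooth, that $M\setminus K$ has no bounded components, and that its unbounded components $E_1,\dots,E_m$ satisfy $m\ge 2$ with $E_1,E_2$ nonparabolic. Fix a smooth compact exhaustion $\Omega_1\subset\Omega_2\subset\cdots$ of $M$ with $K\subset\Omega_1$, and let $f_i$ be the harmonic function on $\Omega_i$ with $f_i\equiv 1$ on $\partial\Omega_i\cap E_1$ and $f_i\equiv 0$ on $\partial\Omega_i\setminus E_1$. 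The maximum principle gives $0\le f_i\le 1$, so interior elliptic estimates yield a subsequence converging in $C^\infty_{\mathrm{loc}}$ to a harmonic function $f$ on $M$ with $0\le f\le 1$. It remains to show that $f$ has finite Dirichlet energy and is non-constant.

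For the energy bound I would compare $f_i$, which minimizes Dirichlet energy among functions with its own boundary values on $\partial\Omega_i$, with a fixed competitor. Since $E_1\setminus\overline{B_{r_0}(x)}$ is both open and closed in $M\setminus\overline{B_{r_0}(x)}$ once $r_0$ is large (here $x\in K$ is fixed), there is a smooth $\varphi:M\to[0,1]$ with $\varphi\equiv 1$ on $E_1\setminus\overline{B_{r_0}(x)}$, $\varphi\equiv 0$ on $M\setminus(E_1\cup\overline{B_{r_0}(x)})$, and $\nabla\varphi$ supported in $\overline{B_{r_0}(x)}$. For $i$ large, $\varphi$ agrees with $f_i$ on $\partial\Omega_i$, so $\int_{\Omega_i}|\nabla f_i|^2\le\int_M|\nabla\varphi|^2=:C<\infty$; letting $i\to\infty$ along the convergent subsequence gives $\int_M|\nabla f|^2\le C$.

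For non-constancy I would use the capacity potentials of the two nonparabolic ends. For a nonparabolic unbounded component $E$ with smooth boundary, the harmonic functions $h_{E,i}$ on $E\cap\Omega_i$ with $h_{E,i}\equiv 1$ on $\partial E$ and $h_{E,i}\equiv 0$ on $\partial\Omega_i\cap E$ increase (by the maximum principle) to a positive harmonic function $h_E$ on $E$ with $h_E|_{\partial E}\equiv 1$ and $0<h_E<1$ in $E$; moreover $h_E$ is dominated by any barrier for $E$, and normalizing a barrier to have infimum $0$ (as in the proof of Proposition \ref{prop:nonparabolicends}) shows $\inf_E h_E=0$. Comparing on $E_1\cap\Omega_i$ the harmonic functions $f_i$ and $1-h_{E_1,i}$ (which agree on $\partial\Omega_i\cap E_1$ and satisfy $f_i\ge 0=1-h_{E_1,i}$ on $\partial E_1$) gives $f_i\ge 1-h_{E_1,i}$, hence $f\ge 1-h_{E_1}$ on $E_1$; comparing $f_i$ and $h_{E_2,i}$ on $E_2\cap\Omega_i$ similarly gives $f\le h_{E_2}$ on $E_2$. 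If $f$ were a constant $c$, then $c\ge 1-\inf_{E_1}h_{E_1}=1$ and $c\le\inf_{E_2}h_{E_2}=0$, a contradiction. Thus $f$ is a non-constant (bounded) harmonic function with finite Dirichlet energy, proving the lemma.

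The only delicate point is the non-constancy argument, and it is precisely there that the hypothesis that \emph{two} ends are nonparabolic is used: the lower bound $f\ge 1-h_{E_1}$ is only informative because the minimal barrier $h_{E_1}$ has infimum $0$, and likewise for the upper bound coming from $E_2$; with a single nonparabolic end one would only obtain $0\le c\le 1$, which yields no contradiction. The remaining ingredients --- solvability of the Dirichlet problems on $\Omega_i$, $C^\infty_{\mathrm{loc}}$ subconvergence, and the competitor estimate --- are standard.
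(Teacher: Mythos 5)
Your proof is correct, and while it follows the same overall Li--Tam skeleton as the paper (solve Dirichlet problems on an exhaustion with boundary data singling out one nonparabolic end, extract a locally smooth limit, and use the second nonparabolic end to force non-constancy), the two key technical steps are handled differently. The paper first builds an auxiliary harmonic function $h$ on $M\setminus K$ with $h|_{\partial E}=1$, $h|_{\partial F}=0$, $\liminf_E h=0$, $\limsup_F h=1$, and \emph{finite Dirichlet energy} (the last point is delegated to \cite[Lemma 3.6]{Li:harmonic.lectures}); it then uses $h|_{\partial B_{R_i}}$ as boundary data, obtains non-constancy from the sandwich $f\le h$ on $E$, $f\ge h$ on $F$, and gets the energy bound from a telescoping comparison of $\int|\nabla f_{i+1}|^2$ with $\int|\nabla f_i|^2$ plus the energy of $h$ in the annulus. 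You instead take the locally constant boundary data $\chi_{E_1}|_{\partial\Omega_i}$, which makes the energy bound completely elementary (Dirichlet principle against a fixed cutoff competitor whose gradient has compact support) and avoids invoking the finite-energy construction of $h$ altogether; the price is that non-constancy now needs the two-sided comparison with the capacity potentials $h_{E_1},h_{E_2}$ and the normalization $\inf_E h_E=0$, which is exactly the barrier-normalization step already used in the proof of Proposition \ref{prop:nonparabolicends}, so nothing new is required. Your version is arguably more self-contained; the paper's version produces along the way the function $h$ with prescribed asymptotics on both ends, which is sometimes useful in its own right. All the maximum-principle comparisons you state ($f_i\ge 1-h_{E_1,i}$ on $E_1\cap\Omega_i$, $f_i\le h_{E_2,i}$ on $E_2\cap\Omega_i$, monotonicity of $h_{E,i}$, domination by any normalized barrier) check out, and the lower semicontinuity step passing the energy bound to the limit is standard.
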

\begin{proof}
Let $E,F$ denote nonparabolic components of $M\setminus K$. There exists a harmonic function $0 \leq h \leq 1$ on $M\setminus K$ so that $h\equiv 0$ on $M\setminus (K \cup E \cup F)$, $h|_{\partial E} =1$, $h|_{\partial F} = 0$, $\liminf_{x\in E,x\to\infty} h = 0$, and $\limsup_{x\in F,x\to\infty}h = 1$. By the argument in \cite[Lemma 3.6]{Li:harmonic.lectures}, we can assume that $h$ has finite Dirichlet energy on $M\setminus K$.

For $R_{i}\to \infty$, define $f_{i}$ to be the harmonic function on $B_{R_{i}}(x)$ with $f_{i}|_{\partial B_{R_{i}}(x)} = h|_{\partial B_{R_{i}}(x)}$. Note that $0\leq f_{i}\leq 1$. In particular, $f_{i}\leq h$ on $E\cap B_{R_{i}}(x)$, and $f_{i}\geq h$ on $F\cap B_{R_{i}}(x)$. Passing to a subsequence, the $f_{i}$ limit locally smoothly to a harmonic function $f$ on $M$ with $f \leq h$ on $E$ and $f\geq h$ on $F$. In particular, $f$ is non-constant. 

Finally, observe that
\[
\int_{B_{R_{i+1}}(x)} |\nabla f_{i+1}|^{2} \leq \int_{B_{R_{i}}(x)} |\nabla f_{i}|^{2} + \int_{B_{R_{i+1}}(x)\setminus B_{R_{i}}(x)} |\nabla h|^{2}.
\]
This implies that $f$ has finite Dirichlet energy, completing the proof.
\end{proof}

\subsection{Proof of Theorem \ref{thm:onenonparabolic}}\label{subsec:proof.thm.one.nonpar}
\begin{proof}
Suppose for contradiction that $\{E_k\}$ and $\{\tilde{E}_k\}$ are distinct nonparabolic ends of $M$ adapted to $x$ with length scale $L$. Let $F_k$ and $\tilde{F}_k$ be open subsets with smooth boundary given by Definition \ref{def:par_ends}. Since the ends are distinct, there is a $k_0 \in \N$ such that $E_{k_0} \cap \tilde{E}_{k_0} = \varnothing$. Taking $k_0$ larger if necessary, $F_{k_0}$ and $\tilde{F}_{k_0}$ are disjoint and nonparabolic. Hence, it suffices to rule out two nonparabolic sets.

Suppose $E$ and $F$ are nonparabolic unbounded components of $M\setminus K$. By Lemma \ref{lem:twononparabolic}, there is a nonconstant harmonic function $u$ with finite Dirichlet energy on $M$.

We choose a nice cutoff function to use in Lemma \ref{lem:schoenyau}. Let $\rho_i$ be a smoothing of $d_M(x,\cdot)$ with $\rho_i\mid_{\partial B_{(k_0+i)L}(x)} = (k_0+i)L$, $\rho_i\mid_{\partial B_{(k_0+2i)L}(x)} = (k_0+2i)L$, and $|\nabla \rho_i| \leq 2$. Then we define
\[ \phi_i(y) := \begin{cases}
1 & y \in \overline{B}_{(k_0+i)L}(x)\\
\frac{(k_0+2i)L - \rho_i(y)}{iL} & y \in \overline{B}_{(k_0+2i)L}(x) \setminus B_{(k_0+i)L}(x)\\
0 & y \in M \setminus B_{(k_0+2i)L}(x).
\end{cases} \]
Lemma \ref{lem:schoenyau} and the fact that $u$ has finite Dirichlet energy implies
\begin{align*}
\int_{B_{(k_0+i)L}(x)} \frac{1}{3}|A_M|^2|\nabla u|^2 + \frac{1}{2}|\nabla|\nabla u||^2
& \leq \int_M \frac{1}{3}\phi_i^2|A_M|^2|\nabla u|^2 + \frac{1}{2}\phi_i^2|\nabla|\nabla u||^2\\
& \leq \int_M |\nabla \phi_i|^2|\nabla u|^2\\
& \leq \frac{C}{i^2}.
\end{align*}
Taking $i \to \infty$, we conclude that
\[ |A_M|^2|\nabla u|^2 \equiv 0 \ \ \text{and}\ \ |\nabla |\nabla u||^2 \equiv 0. \]
Since $|\nabla u|$ is constant and $u$ is non-constant, we have $|\nabla u| \neq 0$ everywhere. Then $|A_M| \equiv 0$, which implies  $\mathrm{Ric}_M \geq 0$ by Lemma \ref{lem:k2+}. The assumption of at least two ends implies (by Cheeger-Gromoll splitting) that $M$ is a product $\mathbf{R} \times P$. Therefore, $M$ has infinite volume. However, $|\nabla u|^2$ is a nonzero constant and $\int_M |\nabla u|^2 < \infty$ by Lemma \ref{lem:twononparabolic}, which implies that $M$ has finite volume. Hence, we reach a contradiction.
\end{proof}

\section{Stable Hypersurfaces in PSC 4-Manifolds with $\Ric_{2}\geq 0$}\label{sec:proofs.main}

Our aim in this section is to prove Theorem \ref{thm:main} (which immediately implies Theorem \ref{thm:main.sec} and Corollary \ref{cor:cpt.sec}).

\begin{remark}
If $M$ is parabolic, then the conclusion of Theorem \ref{thm:main} follows immediately, as parabolicity allows us to ``plug 1 into the stability inequality.'' The hard case is therefore the nonparabolic case. In this case, Theorem \ref{thm:onenonparabolic} implies that $M$ has precisely one nonparabolic end. Hence, the main difficulty of Theorem \ref{thm:main} is this troublesome end.
\end{remark}

\begin{remark}
We can assume without loss of generality that $M$ is simply connected. In the general case, we pass to the universal cover, which inherits minimality, stability, two-sidedness, and completeness from the original immersion. The conclusion of Theorem \ref{thm:main} for the universal cover then descends to the original immersion.
\end{remark}

\subsection{$\mu$-bubbles}

We first recall a diameter bound that uses the theory of warped $\mu$-bubbles for stable minimal hypersurfaces in PSC 4-manifolds.

\begin{lemma}[Warped $\mu$-bubble diameter bound]\label{lem:mububble}
Let $(X^4, g)$ be a complete 4-manifold with $R_g \geq 1$. Let $N^3 \to (X^4, g)$ be a two-sided stable minimal immersion with compact boundary. There are universal constants $L > 0$ and $c > 0$ such that if there is a $p \in N$ with $d_N(p, \partial N) > L/2$, then in $N$ there is an open set $\Omega \subset B_{L/2}^N(\partial N)$ and a smooth surface $\Sigma^2$ such that $\partial \Omega = \Sigma \sqcup \partial N$ and each component of $\Sigma$ has diameter at most $c$.
\end{lemma}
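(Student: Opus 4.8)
The plan is to prove this by a $\mu$-bubble comparison argument, following Gromov's warped $\mu$-bubble technique as adapted to the positive-scalar-curvature setting. Since $N^3 \to (X^4,g)$ is a two-sided stable minimal immersion, the stability inequality combined with the Gauss equation lets $N$ ``inherit'' positivity of scalar curvature in a weighted sense: more precisely, if $u$ is a first eigenfunction of (or a positive supersolution for) the stability operator $-\Delta_N - |A_N|^2 - \Ric_g(\nu,\nu)$, then the conformally-weighted scalar curvature of $N$ controls $R_g$ from below. The idea is to introduce a warping function built from the distance $\rho(\cdot) = d_N(\cdot, \partial N)$ and from the stability eigenfunction, and to minimize the warped $\mu$-bubble functional
\[
\mathcal{A}_h(\Omega) = \int_{\partial^* \Omega} u^2 \, d\mathcal{H}^2 - \int_\Omega (u^2 h) \, d\mathcal{H}^3
\]
over Caccioppoli sets $\Omega$ containing a fixed collar of $\partial N$ and contained in $B_{L/2}^N(\partial N)$, where $h$ is a smooth prescribing function chosen to blow up near the ``inner'' boundary of the admissible region and to blow down near $\partial N$. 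Existence and regularity of a minimizer $\Omega$ with smooth free boundary $\Sigma = \partial\Omega \setminus \partial N$ is standard in dimension $3$ (where the free boundary $\Sigma$ is a surface); the key is that $\Sigma$ stays away from both $\partial N$ and the locus $\rho = L/2$ because $h \to \pm\infty$ there.

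First I would set up the prescribing function: pick $h = h(\rho)$ depending only on $\rho \in (0, L/2)$ with $h \to +\infty$ as $\rho \to 0^+$ and $h \to -\infty$ as $\rho \to (L/2)^-$, tuned (as in Gromov, or as in \cite{CL:aspherical,CLL:suff.conn}) so that $h^2 + 2|h'|$ is comparable to a fixed constant depending on $R_0 = 1$; this is the standard ODE constraint that makes the argument close. Second, I would invoke existence and interior+free-boundary regularity for the minimizer $\Omega$, using that $3 \le 7$ so $\Sigma$ is smooth; the barrier argument forces $\Omega \subset B_{L/2}^N(\partial N)$ and $\Sigma$ disjoint from $\partial N$, so $\partial\Omega = \Sigma \sqcup \partial N$. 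Third — the heart of the matter — I would write down the second variation of $\mathcal{A}_h$ at $\Omega$: stability of $\Sigma$ as a $\mu$-bubble gives, for all $\psi \in C^\infty(\Sigma)$,
\[
\int_\Sigma |\nabla_\Sigma \psi|^2 \geq \int_\Sigma \left( \tfrac{1}{2} R_\Sigma - \tfrac{1}{2}|A_\Sigma|^2 - \tfrac{1}{2} h^2 - |\nabla h| + (\text{terms from } u) \right)\psi^2,
\]
where after using the stability of $N$ to absorb the $u$-terms and $\Ric_g(\nu,\nu)$, and using $R_g \ge 1$ together with the Gauss equation on $N$, one obtains that each component of $\Sigma$ carries a metric with $-\Delta_\Sigma + K_\Sigma > c_0 > 0$ being a positive operator (equivalently, $\Sigma$ has a conformal metric of scalar curvature $\ge c_0$). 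By Gauss–Bonnet this forces each component of $\Sigma$ to be a $2$-sphere, and then a standard argument (e.g., the fill-in / diameter estimate of \cite{Gromov:metric-inequalities}, or directly running Gromov's $\mu$-bubble argument one more time on $\Sigma$ itself, or using the positive-operator bound to get a Bonnet–Myers-type diameter bound on the conformally related metric and then comparing metrics) bounds the diameter of each component of $\Sigma$ by a universal constant $c$.

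The main obstacle I anticipate is the bookkeeping in step three: correctly tracking how the stability inequality for $N$ (which involves $|A_N|^2 + \Ric_g(\nu,\nu)$ and the eigenfunction $u$), the Gauss equation relating $R_N$, $R_g$, $\Ric_g(\nu,\nu)$, and $|A_N|^2$, and the $\mu$-bubble second variation all combine so that the ``bad'' curvature terms ($|A_N|^2$, the cross terms $A_\Sigma$, the warping derivatives) cancel or have a favorable sign, leaving a clean uniform positive lower bound on the stability operator of $\Sigma$. Choosing the length scale $L$ and the profile $h$ so that all constants are universal (independent of $N$ and of $(X,g)$, using only $R_g \ge 1$) is delicate but, given the prior literature on warped $\mu$-bubbles, ultimately routine; the conclusion that components of $\Sigma$ are spheres of bounded diameter then follows as indicated. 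Note that only $R_g \ge 1$ is used in this lemma — the sectional/intermediate-Ricci hypotheses enter elsewhere in the paper (in controlling volume growth via Lemma \ref{lem:k2+} and in the bound on $|A_M|$ from Lemma \ref{lem:boundedgeometry}).
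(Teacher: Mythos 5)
Your proposal follows essentially the same route as the paper: a warped $\mu$-bubble functional weighted by a positive (super)solution $u$ of the stability operator, a prescribing function $h$ built from a smoothing of $d_N(\cdot,\partial N)$ that blows up to $+\infty$ at $\partial N$ and to $-\infty$ at distance $\approx L/2$ (the paper takes $h=-\tfrac12\tan\rho$ with $L=20\pi$), and then the second variation combined with $\Delta_N u\le -\tfrac12(1-R_N)u$ and the pointwise inequality $\tfrac14+h^2+2\langle\nabla h,\nu\rangle\ge 0$ to produce a uniformly positive operator of the form $-\Delta_\Sigma+K_\Sigma-c_0$ on each component, whence the diameter bound via the standard Schoen--Yau/Gromov argument (\cite[Lemmas 16, 18]{CL:aspherical}). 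The only cosmetic differences are your choice of weight $u^2$ in place of $u$ (which merely shifts constants) and the imprecise form of the ODE constraint on $h$; the bookkeeping you flag as the main obstacle is exactly the computation the paper imports from \cite{CL:aspherical}.
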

\begin{proof}
This follows from Gromov's  band-width estimate technique \cite{Gromov:metric-inequalities}. For completeness, we sketch the proof here, with references to the relevant statements from \cite{CL:aspherical}. Choose $L=20\pi$. Since $N$ is two-sided and stable, we have
\[\int_N |\nabla \psi|^2 - \tfrac12 (R_g - R_N + |A_N|^2) \psi^2 \ge 0,\quad \forall \psi\in C_c^1 (N).\]
Since $R_g \ge 1$, there exists $u\in C^\infty (N)$, $u>0$ in $\mathring{N}$ such that
\begin{equation}\label{eq:first.eigenfunction.stability}
	\Delta_N u \le -\tfrac 12 (1-R_N) u.
\end{equation}
For instance, see \cite[Theorem 1]{fischer-colbrie-schoen}.

Take $\rho_0\in C^\infty(M)$ to be a smoothing of $d_N(\cdot, \partial N)$, such that $|\Lip(\rho_0)|\le 2$, and $\rho_0=0$ on $\partial N$. Choose $\eps\in (0,\tfrac12)$ such that $\eps,\ 4\pi + \frac{3}{2}\eps,\ 8\pi + 2\eps$ are regular values of $\rho_0$. Define
\[\rho = \frac{\rho_0 - \eps}{8 + \frac{\eps}{\pi}} - \frac{\pi}{2},\]
$\Omega_1 = \{x\in N: -\tfrac{\pi}{2}< \rho < \tfrac{\pi}{2}\}$, and $\Omega_0 = \{x\in N: -\tfrac{\pi}{2}< \rho \le 0\}$. Clearly $|\Lip(\rho)|<\tfrac 14$.

On $\Omega_1$, define $h(x)=-\tfrac12 \tan(\rho(x))$. For Caccioppoli sets $\Omega$ in $\Omega_1$ such that $\Omega\Delta \Omega_0$ is compactly contained in $\Omega_1$, consider
\[\cA(\Omega) = \int_{\partial \Omega} u ~d\cH^2 - \int_{\Omega_1}(\chi_\Omega - \chi_{\Omega_0}) hu ~d\cH^3.\]
By \cite[Proposition 12]{CL:aspherical}, there exists a minimizer $\tilde \Omega$ for $\cA$ such that $\tilde \Omega\Delta \Omega_0$ is compactly contained in $\Omega_1$. Let $\Omega= \{x\in \N: 0\le \rho_0(x)\le \eps\}\cup \tilde \Omega$. We claim that $\Omega$ satisfies the conclusion with $c=\frac{4\pi}{\sqrt 3}$. 

Indeed, let $\Sigma_0$ be a connected component of $\Sigma = \partial \Omega\cap \Omega_1$. The nonnegativity of the second variation for $\cA$ on $\Sigma_0$ implies that (see \cite[Lemma 14]{CL:aspherical})	
\begin{multline}\label{eq:mububble.stability}
	\int_{\Sigma_0} |\nabla_{\Sigma_0}\psi|^2 u -\tfrac12 \left(R_N -\tfrac14 -2K_{\Sigma_0}\right)\psi^2  u +(\Delta_N u - \Delta_{\Sigma_0} u)\psi^2 \\
		-\tfrac12 u^{-1} \langle \nabla_N u,\nu\rangle^2 \psi^2 - \tfrac12 \left(\tfrac14 + h^2 + 2\langle \nabla_N h ,\nu\rangle\right)\psi^2 u \ge 0,\quad \forall \psi\in C^1(\Sigma_0).
\end{multline}
The choice of $\rho$ and $h$ guarantees that $\tfrac 14 + h^2 + 2\langle \nabla_N h,\nu\rangle \ge 0$ pointwise on $\Omega_1$. Combined with \eqref{eq:first.eigenfunction.stability}, this implies that
\[\int_{\Sigma_0} |\nabla_{\Sigma_0} \psi|^2 u - \tfrac 12 \left(\tfrac 34  - 2K_{\Sigma_0}\right)\psi^2 u - (\Delta_{\Sigma_0}u) \psi^2 \ge 0,\quad \forall \psi\in C^1(\Sigma_0).\]
Thus we apply \cite[Lemma 16 and Lemma 18]{CL:aspherical} and conclude that $\diam (\Sigma_0)\le \frac{4\pi}{\sqrt 3}$. This completes the proof.
\end{proof}

\subsection{Almost linear volume growth}

We show that every end of a stable hypersurface in a $\Ric_{2}\geq 0$ PSC 4-manifold has a core tube with linear volume growth.

\begin{lemma}[Almost linear volume growth of an end]\label{lem:linvolgrowth}
Let $(X^4, g)$ be a complete 4-manifold with weakly bounded geometry, $\Ric_{2}^g\geq 0$, and $R_g \geq 1$. Let $M^3 \to (X^4, g)$ be a simply connected complete two-sided stable minimal immersion. Let $\{E_k\}_{k \in \N}$ be an end of $M$ adapted to $x \in M$ with length scale $L$, where $L$ is the constant from Lemma \ref{lem:mububble}. Let $M_k := E_k \cap B_{(k+1)L}(x)$. Then there is a constant $C > 0$ such that
\[ \mathrm{Vol}_M(M_k) \leq C \]
for all $k$.
\end{lemma}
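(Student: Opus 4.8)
The plan is to reduce the estimate to controlling the intrinsic $L$-collar of $\partial E_k$, and then to cover that collar by a \emph{uniformly bounded} number of ``bands'' cut out by $\mu$-bubbles, each band having controlled volume thanks to a uniform intrinsic Ricci lower bound on $M$. \textbf{Step 1 (uniform geometry of $M$).} First I would show $|A_M|\le\Lambda$ on all of $M$ for some $\Lambda=\Lambda(Q)$: apply Lemma~\ref{lem:boundedgeometry} to the compact stable minimal immersions obtained by restricting $M$ to the exhausting domains $\overline B^M_j(x)$ and let $j\to\infty$, using that $M$ is boundaryless and complete so $d_M(\cdot,\partial(\overline B^M_j(x)))\to\infty$ pointwise. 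By Lemma~\ref{lem:k2+} this gives $\Ric_M\ge-\Lambda^2 g_M$, so Bishop--Gromov yields $\mathrm{Vol}_M(B^M_r(q))\le V(r)$ for all $q\in M$, with $V=V(\Lambda,r)$; only $r$ a bounded multiple of $L$ is needed. \textbf{Step 2 (a distance identity).} Since $\partial E_k\subset\partial B_{kL}(x)$ and $E_k$ is a connected component of $M\setminus\overline B_{kL}(x)$, a minimizing geodesic from $q\in E_k$ to $x$ stays in $E_k$ until it first meets $\overline B_{kL}(x)$, necessarily at a point of $\partial E_k$; hence $d_M(q,\partial E_k)=d_M(q,x)-kL$ for all $q\in E_k$, and in particular $M_k=\{q\in E_k:d_M(q,\partial E_k)<L\}$. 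So it suffices to bound the volume of the intrinsic $L$-collar of $\partial E_k$ inside $E_k$ (recall $\partial E_k$ is connected by Proposition~\ref{prop:connectedness}).

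\textbf{Step 3 ($\mu$-bubble exhaustion of the end).} Set $N_0=\overline{E}_k$ (smoothing $\partial E_k$ if necessary) and iterate Lemma~\ref{lem:mububble}: as $N_j$ is unbounded it has points at distance $>L/2$ from its compact boundary, so one obtains a region $\Omega_j\subset B^{N_j}_{L/2}(\partial N_j)$ and a smooth surface $\Sigma_j$ with $\partial\Omega_j=\Sigma_j\sqcup\partial N_j$, each component of $\Sigma_j$ of diameter $\le c$; put $N_{j+1}:=N_j\setminus\overline{\Omega_j}$, so $\partial N_{j+1}=\Sigma_j$. The crucial point is that each such ``peeling'' advances inward by a definite amount: inspecting the construction in Lemma~\ref{lem:mububble}, the minimizer for $\cA$ must contain a definite collar of $\partial N_j$ (a deep ``fjord'' cutting into this collar would have disproportionately large boundary, contradicting minimality), which gives a universal $\delta_0>0$ with $d_{N_j}(\Sigma_j,\partial N_j)\ge\delta_0$. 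Following a path in $\overline{E}_k$ from a point $q\in N_{j+1}$ to $\partial E_k$, which must cross $\Sigma_j,\Sigma_{j-1},\dots,\Sigma_0$ in succession with consecutive crossings at distance $\ge\delta_0$, yields $d_M(q,\partial E_k)\ge(j+1)\delta_0$. Therefore, for the universal integer $m_0$ with $m_0\delta_0\ge L$,
\[ M_k=\{q\in E_k:d_M(q,\partial E_k)<L\}\ \subset\ \bigcup_{i=0}^{m_0-1}\overline{\Omega_i}. \]

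\textbf{Step 4 (volume of each band).} It remains to bound $\mathrm{Vol}_M(\overline{\Omega_i})$ uniformly for $0\le i<m_0$. A ``walking away from $\partial N_i$'' argument (a path from a point of $\overline{\Omega_i}$ along which $d_{N_i}(\cdot,\partial N_i)$ increases must leave the bounded set $\overline{\Omega_i}$ within length $L/2$, hence crosses $\Sigma_i$) shows $\overline{\Omega_i}\subset B^M_{L}(\Sigma_i)$, where $\Sigma_i$ has components of diameter $\le c$ and bounded mean curvature. One then upgrades diameter control to volume control as in step~(4) of the introduction: the minimizing property of the $\mu$-bubble gives a uniform area-density bound for $\Sigma_i$ at scales $\lesssim L$ (comparing $\tilde\Omega_i$ with $\tilde\Omega_i\triangle B_r(p)$ and controlling the conformal weight via the Cheng--Yau gradient estimate, which uses only $\Ric_M\ge-\Lambda^2$), so the bounded-diameter components of $\Sigma_i$ have bounded area; a Heintze--Karcher tube estimate (again via $\Ric_M\ge-\Lambda^2$, together with the bounded mean curvature of $\Sigma_i$) then bounds $\mathrm{Vol}_M(B^M_{L}(\Sigma_i))$. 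Summing over $i$ gives $\mathrm{Vol}_M(M_k)\le C$ with $C=C(Q)$ independent of $k$.

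The heart of the matter is Step~4 together with the ``definite advance'' in Step~3. The $\mu$-bubble of Lemma~\ref{lem:mububble} — which is exactly where strict positivity $R_g\ge1$ is used — only provides control of the \emph{diameter} of the cross-sections of the end; converting this into a genuine volume bound requires the intrinsic Ricci lower bound on $M$ (hence the hypothesis $\Ric_{2}^g\ge0$ through Lemma~\ref{lem:k2+}, and weakly bounded geometry through Lemma~\ref{lem:boundedgeometry}), as well as care to control the area of the $\mu$-bubbles, e.g.\ in possibly collapsed regions of $M$; Steps~1--2 only use $\Ric_{2}^g\ge0$.
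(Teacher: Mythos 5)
Your Steps 1--2 are fine and coincide with the paper's endgame (a global bound $|A_M|\leq\Lambda$ from Lemma \ref{lem:boundedgeometry}, hence $\Ric_M\geq-\Lambda^2$ from Lemma \ref{lem:k2+}, hence a Bishop--Gromov volume bound for balls of fixed radius). But the core of your argument, Step 4, has a genuine gap: Lemma \ref{lem:mububble} bounds the diameter of \emph{each component} of $\Sigma_i$, and gives no control whatsoever on the \emph{number} of components, hence none on the total area of $\Sigma_i$ or on $\mathrm{Vol}(B^M_L(\Sigma_i))$. Your density and Heintze--Karcher estimates would bound the volume of the tube around a single component; summing over an uncontrolled number of components gives nothing, and the only a priori bound the minimization yields is on $\int_{\Sigma_i}u$ in terms of the area of $\partial N_i$ and the volume of the collar --- exactly the quantities you are trying to bound, so that route is circular. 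A secondary issue: the ``definite advance'' $d_{N_j}(\Sigma_j,\partial N_j)\geq\delta_0$ in Step 3 is not part of the statement of Lemma \ref{lem:mububble}, and the ``fjord'' heuristic is not how the construction controls it; it is fixable (the construction adjoins the collar $\{0\leq\rho_0\leq\eps\}$ to the minimizer by fiat, and $\eps$ may be chosen bounded below), but as written it is an unproved assertion.

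The missing idea is topological, and it is precisely where the simple connectedness of $M$ enters: one shows (via Proposition \ref{prop:connectedness} and an intersection-number argument with a curve joining $\partial E_k$ to $\partial E_{k+1}$) that some \emph{single} component $\Sigma_k$ of $\partial\Omega_k\setminus\partial E_k$ separates $\partial E_k$ from $\partial E_{k+1}$ in $\overline M_k$. Then every point $y\in M_k$ lies on a radial geodesic from $x$ which must cross the separating component $\Sigma_{k-1}$ of the previous annulus, and the arc from $y$ back to that crossing point has length at most $2L$; combined with $\diam(\Sigma_{k-1})\leq c$ this gives $\diam(M_k)\leq 4L+c$ uniformly in $k$, after which a single application of Bishop--Gromov (your Step 1) bounds $\mathrm{Vol}_M(M_k)$. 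No iteration of the $\mu$-bubble lemma, no lower bound on $d(\Sigma,\partial N)$, and no area bound on the $\mu$-bubbles are needed. I would rework your proof around this separation/diameter argument.
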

\begin{proof}

We apply Lemma \ref{lem:mububble} to $E_k$, which supplies an open set $\Omega_k$ in $B_{L/2}(\partial E_k) \cap E_k$ with $\partial E_k \subset \partial \Omega_k$. Note that $\Omega_k \subset M_k$ and $\partial E_{k+1} \cap \overline{\Omega_k} = \varnothing$ by construction. Let $\Sigma_k^{(i)}$ denote the components of $\partial \Omega_k \setminus \partial E_k$.

First, we claim that $\overline{M}_k$ is connected. Indeed, this claim follows from the connectedness of $\partial E_k$ by taking segments of radial geodesics from $x$, just as in Proposition \ref{prop:connectedness}.


Second, we claim that some $\Sigma_k^{(i)}$ separates $\partial E_k$ from $\partial E_{k+1}$. Let $\gamma$ be any curve from $\partial E_k$ to $\partial E_{k+1}$ in $\overline{M}_k$, which exists by the connectedness of $\overline{M}_k$. Perturbing $\gamma$ if necessary, we can guarantee that any intersection with $\bigcup_i \Sigma_k^{(i)}$ is transverse. Since $\partial E_k \subset \overline{\Omega}_k$ and $\partial E_{k+1} \cap \overline{\Omega}_k = \varnothing$, $\gamma$ intersects some $\Sigma_k^{(i)}$ an odd number of times. Suppose for contradiction that $\Sigma_k^{(i)}$ does not separate $\partial E_k$ from $\partial E_{k+1}$. Then there is another curve $\gamma'$ from $\partial E_k$ to $\partial E_{k+1}$ in $\overline{M}_k \setminus \Sigma_k^{(i)}$. Since $\partial E_k$ and $\partial E_{k+1}$ are connected by Proposition \ref{prop:connectedness}, we have constructed a loop with nontrivial intersection number with $\Sigma_k^{(i)}$, contradicting the simple connectedness of $M$.

Let $\Sigma_k$ denote the component of $\partial \Omega_k \setminus \partial E_k$ provided by the above claim. By Lemma \ref{lem:mububble}, we have $\mathrm{diam}(\Sigma_k) \leq c$.

We claim that there is a constant $D > 0$ (independent of $k$) so that $\mathrm{diam}(M_k) \leq D$. Let $y$ and $z$ be any points in $M_k$. Take the radial geodesic from $x$ to $y$. As in the proof of the connectedness of $M_k$, this curve intersects $\partial E_k$ and $\partial E_{k-1}$. By the choice of the components $\{\Sigma_k\}$, an arc of this curve of length at most $2L$ joins $y$ to $\Sigma_{k-1}$. The same argument applies to $z$. Hence, the diameter bound for $\Sigma_{k-1}$ implies
\[ d_M(y, z) \leq 4L + c =: D. \]

By Lemma \ref{lem:boundedgeometry}, $M$ has bounded second fundamental form (e.g.\ use the compact immersion of $B^M_2(q)$ for any $q \in M$). By Lemma \ref{lem:k2+}, $\mathrm{Ric}_M$ is bounded from below. Hence, by Bishop--Gromov volume comparison, there is a constant $C > 0$ so that
\[ \mathrm{Vol}(B^M_{D}(p)) \leq C \]
for all $p \in M$. Since $M_k \subset B^M_{D}(p)$ for any $p \in M_k$, the lemma follows.
\end{proof}

\subsection{Decomposition of $M$}\label{subsec:decomp}
To construct a nice sequence of test functions for the stability inequality, we need to decompose $M$ into convenient building-blocks.

Let $M^3$ be a complete, simply connected, Riemannian 3-manifold such that the conclusions of Theorem \ref{thm:onenonparabolic} hold. Let $\{E_k\}_{k \in \N}$ be the nonparabolic end of $M$ adapted to $x \in M$ with length scale $L$ (where $L$ is the universal constant from Lemma \ref{lem:mububble}).

We now decompose $M$ to handle the nonparabolic end. Let $k_0 \geq 1$ and $i \geq 1$.
\begin{itemize}
\item $M_k := E_k \cap B_{(k+1)L}(x)$ for all $k$.
\item $\{P_{k_0}^{(\alpha)}\}_{\alpha = 1}^{n_{k_0}}$ are the components of $M \setminus \overline{B}_{k_0L}(x)$ besides $E_{k_0}$.
\item $\{P_k^{(\alpha)}\}_{\alpha = 1}^{n_k}$ are the components of $E_{k-1} \setminus \overline{B}_{kL}(x)$ besides $E_k$ for $k > k_0$.
\end{itemize}
Note that we have the decomposition
\[ M = \overline{B}_{(k_0+i)L}(x) \cup \bigcup_{k = k_0+i}^{k_0 + 2i - 1} \left(\overline{M}_k \cup \bigcup_{\alpha=1}^{n_k} \overline{P}_k^{(\alpha)} \right) \cup (\overline{E}_{k_0 + 2i - 1} \setminus B_{(k_0+2i)L}(x)).  \]
By Theorem \ref{thm:onenonparabolic}, $P_k^{(\alpha)}$ is either bounded or $P_k^{(\alpha)} \setminus K$ is parabolic for all compact $K \subset M$ such that $P_k^{(\alpha)} \setminus K$ has smooth boundary.

\subsection{Proof of Theorem \ref{thm:main}}
\begin{proof}
Supposing it exists, let $\{E_k\}$ be the nonparabolic end of $M$. We use the decomposition of $M$ from \S \ref{subsec:decomp}. 

\emph{The Core Tube}. For each $k$, let $\rho_k$ be a smooth function on $\overline{M}_k$ such that $|\nabla \rho_k| \leq 2$,
\[ \rho_k |_{\partial E_k} \equiv kL,\ \ \text{and}\ \ \rho_k |_{\partial M_k \setminus \partial E_k} \equiv (k+1)L. \]
For instance, we can take $\rho_k$ to be a smoothing of the distance function from $x$.

Let $\phi : [(k_0+i)L, (k_0+2i)L] \to [0, 1]$ be the linear function satisfying $\phi((k_0+i)L)=1$ and $\phi((k_0+2i)L) = 0$.

\emph{The Extraneous Pieces}. By Proposition \ref{prop:parabolicends}, there is a compactly supported Lipschitz function $u_{k, \alpha,i}$ on $\overline{P}_k^{(\alpha)}$ such that
\[ u_{k, \alpha,i}|_{\partial P_k^{(\alpha)}}\ \equiv 1\ \ \text{and} \ \ \int_{P_k^{(\alpha)}} |\nabla u_{k, \alpha,i}|^2 < \frac{1}{i^2n_k}. \]
Note that if $\partial P_k^{(\alpha)}$ is not smooth, we apply Proposition \ref{prop:parabolicends} to $P_k^{(\alpha)} \setminus K$ (where $K$ is compact and $P_k^{(\alpha)} \setminus K$ has smooth boundary) and extend the function by 1 on $P_k^{(\alpha)} \cap K$. Moreover, if $P_k^{(\alpha)}$ is bounded, we can just take $u_{k, \alpha,i} \equiv 1$.

\emph{The Test Function}. We now construct a  test function $f_i$ for the stability inequality as follows:
\[ f_i(y) := \begin{cases}
1 & y \in \overline{B}_{(k_0+i)L}(x)\\
0 & y \in \overline{E}_{k_0+2i-1} \setminus B_{(k_0+2i)L}(x)\\
\phi(\rho_k(y)) & y \in \overline{M}_k,\ k_0+i \leq k < k_0 + 2i\\
\phi(kL)u_{k, \alpha,i} & y \in \overline{P}_k^{(\alpha)},\ k_0+i \leq k < k_0 + 2i.
\end{cases} \]

By construction, $f_i$ is compactly supported and Lipschitz. Therefore, $f_i$ is an eligible test function for the stability inequality. Hence, the stability inequality and Lemma \ref{lem:linvolgrowth} give
\begin{align*}
\int_M (\mathrm{Ric}_g(\nu, \nu) + & |A_M|^2)f_i^2
\leq \int_M |\nabla f_i|^2\\
& = \sum_{k=k_0+i}^{k_0+2i-1} \int_{M_k} \phi'(\rho_k)^2|\nabla \rho_k|^2 + \sum_{k=k_0+i}^{k_0+2i-1}\sum_{\alpha = 1}^{n_k} \phi(kL)^2\int_{P_k^{(\alpha)}} |\nabla u_{k,\alpha,i}|^2\\
& \leq \frac{4C}{iL^2} + \frac{1}{i} = \frac{C'}{i}.
\end{align*}
Since $f_i$ converges uniformly to the constant 1 function on compact subsets, the $i \to \infty$ limit yields the desired conclusion.
\end{proof}

\section{Topology of PSC $4$-manifolds with $\Ric_2 > 0$}\label{sec:topo}

In this section we prove Theorem \ref{thm:topo}. We begin with the following lemma.

\begin{lemma}\label{lem:topo}
	Suppose $X^{n}$ is a non-compact oriented manifold, $\sigma\subset X$ is a closed curve and $[\sigma]\in H_1(X)$ is a nontrivial, non-torsion element. For any pre-compact open set $D\subset X$ containing a neighborhood of $\sigma$, there exists a smooth compact submanifold $M_0$ of dimension $(n-1)$, such that $\partial M_0\subset X\setminus D$, and the algebraic intersection number of $M_0$ and $\sigma$ is $1$. 
\end{lemma}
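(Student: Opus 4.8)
The plan is to realize the dual of $[\sigma]$ by a cohomology class and then take a preimage of a regular value to produce the submanifold. First I would use that $[\sigma]\in H_1(X)$ is non-torsion to produce a class $\beta\in H^1(X;\Z)$ with $\langle \beta,[\sigma]\rangle = 1$; concretely, $[\sigma]$ generates a $\Z$-summand after passing to $H_1(X;\Z)/(\text{torsion})$, and by the universal coefficient theorem $H^1(X;\Z)=\operatorname{Hom}(H_1(X;\Z),\Z)$, so I can choose a homomorphism sending $[\sigma]$ to $1$. Represent $\beta$ by a smooth map $u\colon X\to S^1=\R/\Z$ (using $H^1(X;\Z)=[X,S^1]$), arranged so that $u$ is transverse to the basepoint $0\in S^1$ and so that the algebraic count of intersections of $\sigma$ with $u^{-1}(0)$ equals $\langle\beta,[\sigma]\rangle=1$.

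Next I would set $N_0 := u^{-1}(0)$, a smooth properly embedded (in general non-compact) hypersurface with trivial normal bundle, co-oriented by $du$, and with algebraic intersection number $1$ with $\sigma$. The issue is that $N_0$ need not be compact, whereas the statement asks for a \emph{compact} $M_0$ with $\partial M_0\subset X\setminus D$. To fix this, I would modify $u$ so that it is locally constant (equal to a non-critical value, say $1/2\in S^1$) outside a slightly larger pre-compact set. Pick a pre-compact open $D'$ with $\overline D\subset D'$, and a cutoff $\chi\in C_c^\infty(X)$ with $\chi\equiv 1$ on $\overline D$ and $\operatorname{supp}\chi\subset D'$; one cannot simply multiply an $S^1$-valued map by $\chi$, so instead lift locally: on a neighborhood where we want to deform, write $u$ via a local real lift $\tilde u$ and homotope $\tilde u$ (rel the region where $\chi=1$) to a constant on the complement of $D'$. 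This is possible because we are only changing $u$ on the pre-compact region $D'\setminus \overline D$ and we may assume (after a further homotopy supported in $D'\setminus D$) $u$ already avoids $0$ there; the homotopy extension/relative Tietze-type argument produces a smooth $u'\colon X\to S^1$ agreeing with $u$ on a neighborhood of $\sigma$, transverse to $0$, and locally constant $\equiv 1/2$ on $X\setminus D'$. Then $M_0 := \overline{u'^{-1}(0)}$ is contained in $\overline{D'}$, hence pre-compact; since $u'$ is a submersion near $u'^{-1}([0,\epsilon))$ for small $\epsilon$ and equals $1/2$ near $\partial D'$, $M_0$ is a smooth compact manifold with boundary, $\partial M_0\subset \partial D'\subset X\setminus D$ (shrinking/relabeling so that $\partial M_0$ avoids the closure of $D$), and the intersection number with $\sigma$ is unchanged, namely $\langle\beta,[\sigma]\rangle = 1$.

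The main obstacle I expect is the compactification step: making precise that one can homotope the circle-valued map to be locally constant outside a pre-compact set \emph{without} disturbing its behavior near $\sigma$ or its transversality to $0$, and checking that the resulting preimage is a manifold-with-boundary of the correct type. This is a standard but slightly fiddly differential-topology argument (partition of unity, local lifts of $S^1$-valued maps, Sard/transversality to keep $0$ a regular value, and an isotopy to push $\partial M_0$ off $\overline D$); everything else — producing $\beta$ from non-torsion-ness via UCT, representing it in $[X,S^1]$, and reading off the intersection number as the pairing $\langle\beta,[\sigma]\rangle$ — is routine. A mild alternative avoiding circle-valued maps would be to work with the Poincaré–Lefschetz dual: represent the dual of $[\sigma]$ in $H^1_c$ or in relative cohomology $H^1(X, X\setminus D')$, whose image under restriction gives a relative class realized by a properly embedded hypersurface with boundary in $X\setminus D$; but the $S^1$-valued formulation makes the intersection count most transparent.
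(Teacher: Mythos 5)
Your overall strategy (dualize $[\sigma]$ to a circle-valued map and take the preimage of a regular value) is in the same spirit as the paper's, but you set it up globally via $H^1(X;\Z)=[X,S^1]$ and the Kronecker pairing, whereas the paper first applies Poincar\'e duality $H_1(X)\cong H^{n-1}_c(X)$ to get a class supported in a pre-compact set $A\supset D$, identifies $H_{n-1}(X,X\setminus A)\cong H^1(A)\cong\langle A,S^1\rangle$ by excision and Lefschetz duality, and only then takes $f^{-1}(p)$ for a map $f$ defined on $A$. The paper's localization is what makes compactness automatic; your global version has to confront compactness by hand, and that is where the argument breaks.

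The compactification step as you propose it is not correct: there is in general \emph{no} map $u'\colon X\to S^1$ agreeing with $u$ on $\overline D$ and locally constant outside a pre-compact $D'$. Take $X=S^1\times\R^{n-1}$, $\sigma=S^1\times\{0\}$, $u$ the projection to $S^1$, $D=S^1\times B_1$, $D'=S^1\times B_2$. The degree of $u'|_{S^1\times\{x\}}$ is a locally constant function of $x\in\R^{n-1}$, hence constant; it would have to be $1$ for $x\in B_1$ and $0$ for $x\notin B_2$, a contradiction. (The error is that "locally constant outside $D'$" is a constraint on all of $X\setminus D'$, not merely a modification on $D'\setminus\overline D$.) The fix is much simpler than what you attempt: do not modify $u$ at all. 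The level set $N_0=u^{-1}(0)$ is a closed, properly embedded hypersurface; choose a pre-compact open $D'\supset\overline D$ with smooth boundary transverse to $N_0$ and set $M_0=N_0\cap\overline{D'}$. This is a compact smooth $(n-1)$-manifold with $\partial M_0=N_0\cap\partial D'\subset X\setminus D$, and since $\sigma\subset D$, all intersections of $\sigma$ with $N_0$ occur inside $M_0$, so the algebraic intersection number is $\deg(u\circ\sigma)=\langle\beta,[\sigma]\rangle$ as you computed. Two further caveats on the first step: (i) non-torsion does \emph{not} imply that $[\sigma]$ generates a $\Z$-summand mod torsion (e.g.\ $[\sigma]=2\gamma$), so you can only guarantee $\langle\beta,[\sigma]\rangle=d\neq 0$ rather than $=1$; the paper's proof has the same imprecision and only nonvanishing is used in the application, so this is minor. (ii) If $H_1(X)$ is not finitely generated it can happen that $\mathrm{Hom}(H_1(X),\Z)$ fails to see $[\sigma]$ at all (e.g.\ divisible subgroups); the paper's passage to the relative groups of a pre-compact set avoids this, and your global $H^1(X)$ argument should at least acknowledge it.
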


\begin{proof}
By Poincar\'e duality for non-compact manifolds, $H_1(X)$ is isomorphic to $H_c^{n-1}(X)$. Thus, there exists a connected pre-compact open set $A$ with $D\subset A$ as well as $\alpha\in H^{n-1}(X,X\setminus A)$ with $\alpha \frown \mu_A = [\sigma]$. By assumption, $\alpha$ is not a torsion element of $H^{n-1}(X,X\setminus A)$. Thus, by the universal coefficient theorem, we can find $\beta\in H_{n-1}(X,X\setminus A)$ with $\alpha\frown \beta = 1\in H_0(X,X\setminus A)$. By the excision theorem and Lefschetz duality, we have
	\[H_{n-1}(X,X\setminus A)\simeq H^1(A)\simeq \langle A,S^1\rangle,\]
	where $\langle A, S^1\rangle$ is the basepoint-preserving homotopy classes of maps to $S^1$. Take a smooth map $f$ in $\langle A, S^1\rangle$ corresponding to $\beta$, and let $p\in S^1$ be a regular value of $f$. Then the smooth hypersurface $f^{-1}(p)\subset A$ represents $\beta$ and hence has algebraic intersection $1$ with $\sigma$.
\end{proof}

\begin{proof}[Proof of Theorem \ref{thm:topo}]
	We first prove that $H_1(X)$ consists only of torsion elements. If not,  there exists a closed curve $\sigma$ with $[\sigma]\in H_1(X)$ non-torsion. Take $p\in X$, $R_0>0$ with $\sigma\subset B_{R_0}(p)$. By Lemma \ref{lem:topo}, for any  integer $k>R_0$, there exists a smooth compact submanifold $\tilde M_k$ such that $\partial \tilde M_k\subset X\setminus B_{k}(p)$, and the algebraic intersection number of $\tilde M_k$ with $\sigma$ is $1$. Choose $\Omega_{k}\subset X$ a pre-compact region with smooth boundary, with $\tilde M_{k}\subset \Omega_{k}$. Form a metric $g_{k}$ so that $\Omega_{k}$ has mean-convex boundary and $g_{k}$ agrees with $g$ away from $\partial\Omega_{k}$. 
	
	Consider the area-minimizing problem
	\[\inf\{\cH^3_{g_{k}}(M): \partial M=\partial \tilde M_k, M \textrm{ is homologous to }\tilde M_k\}. \]
	It is standard to show that a compact smooth two-sided embedded minimizer $M_k$ exists. Moreover, since $M_k$ is homologous to $\tilde M_k$, the algebraic intersection number of $M_k$ with $\sigma$ is $1$. In particular, $M_k\cap \sigma\ne \emptyset$. 
	
	By Lemma \ref{lem:boundedgeometry}, we have curvature estimates for $\{M_k\}$ on any compact subset of $X$. Thus, by passing to a subsequence (not relabeled), $\{M_k\}$ converges to a complete two-sided stable minimal immersion $M^{3} \to (X,g)$. Note that $M$ is not empty, thanks to the condition that $M_{k}\cap\sigma \not = \emptyset$. This contradicts Theorem \ref{thm:main}.
	
	Now we prove that $X$ is $H_2$-trivial at infinity. Suppose the contrary, that there exists $\alpha\in H_2^\infty(X)$ whose image in $H_2(X)$ is zero. By definition, there exist nested bounded open sets $\{A_j\}_{j=1}^\infty$ such that $A_i\subset A_j$ when $i\le j$ and $\cup A_j = X$, and non-trivial elements $\alpha_j\in H_2(X\setminus A_j)$, such that:
	\begin{enumerate}
		\item When $i\le j$, $\iota_*(\alpha_j) = \alpha_i\ne 0$, here $\iota: X\setminus A_j \hookrightarrow X\setminus A_i$ is the inclusion.
		\item For each $j$, $\iota_* (\alpha_j ) = 0$, here $\iota: X\setminus A_j\to X$ is the inclusion.
	\end{enumerate}
	Thus, for each $j$, there exists a $2$-cycle $\Sigma_j\subset X\setminus A_j$ such that $[\Sigma_j]$ is null homologous in $X$ and if $\tilde M$ is a 3-chain with $\partial \tilde M=\Sigma_j$, then $M\cap A_1 \ne \emptyset$ (otherwise $[\Sigma_j]$ would be null-homologous in $X\setminus A_1$, contradiction). Consider the area-minimizing problem (for metrics $g_{j}$ as before)
	\[\inf\{\cH^3_{g_{j}}(\tilde M):\partial \tilde M = \Sigma_j \}.  \]
	A smooth compact two-sided minimizer $M_j$ exists. By construction, $M_j\cap A_1\ne \emptyset$. Thus, by Lemma \ref{lem:boundedgeometry}, a subsequence converges smoothly to a complete two-sided stable minimal immersion $M^3 \to (X^4, g)$, contradicting Theorem \ref{thm:main}. This completes the proof. 
\end{proof}

\appendix

\section{Curvature conditions} \label{sec:curv-cond}
Here we review the various curvature conditions referred to in this paper. Fix $(X^{n+1},g)$ a Riemannian manifold. We recall the convention used here: $\Pi \subset T_pX$ a $2$-plane with orthonormal basis $\{u,v\}$ then $\sec_g(\Pi) = R_g(v,u,u,v)$. 

For $k \in \{1,\dots,n\}$ and $v_0,\dots,v_k$ set of $k+1$ orthonormal vectors, we define the \emph{$k$-th intermediate Ricci curvature} by
\[
\Ric_k(v_0,\dots,v_k) : = \sum_{i=1}^k R(v_0,v_k,v_k,v_0). 
\]
Note that $\Ric_{n}$ is the usual Ricci curvature while $\Ric_1$ is the sectional curvature. 

Following \cite{ShenYe} we define the \emph{bi-Ricci curvature} of an orthonormal set of vectors $\{u,v\}$ by
\[
\biRic(u,v) = \Ric(u,u) + \Ric(v,v) - R(u,v,v,u). 
\]
Note that when $n+1=3$ then $\biRic(u,v) = R_g/2$, but in higher dimensions $\biRic\geq 0$ is a stronger curvature condition than $R_g\geq 0$.

\section{Examples} \label{sec:exam}

\subsection{Positive sectional curvature} \label{sec:pos.K.cx} Here we give the details of Example \ref{exam:hardy} concerning the existence of a complete stable minimal hypersurface in ambient positive sectional curvature. 

For $\alpha \in (0,1)$ to be chosen close to $1$ below, define 
\[
\rho(r) = \alpha r + (1-\alpha) \int_{0}^{r} e^{-s^{2}} ds.
\]
Consider a metric $g$ on $\mathbf{R}^{4}$ given by
\[
g = dr^{2} + \rho(r)^{2} g_{\mathbf{S}^{3}}.
\]
It is standard to compute (cf.\ \cite[\S 4.2.3]{Petersen:Riemannian}) that the sectional curvatures of $g_{\alpha}$ lie between 
\[
-\frac{\rho''(r)}{\rho(r)} = \frac{2(1-\alpha) r e^{-r^{2}}}{\alpha r + (1-\alpha) \int_{0}^{r} e^{-s^{2}} ds} 
\]
and 
\[
\frac{1-\rho'(r)^{2}}{\rho(r)^{2}} = \frac{1 - (\alpha + (1-\alpha)e^{-r^{2}})^{2}}{\left(\alpha r + (1-\alpha) \int_{0}^{r} e^{-s^{2}} ds\right)^{2}},
\]
so $(\mathbf{R}^{4},g)$ has positive sectional curvature. On the other hand, we can fix a totally geodesic $\mathbf{R}^{3} \to (\mathbf{R}^{4}, g)$ (corresponding to $[0,\infty)\times \mathbb{S}^{2} \subset [0,\infty)\times \mathbb{S}^{3}$ in the radial coordinates used above). We claim that this is a stable immersion (embedding), at least for $\alpha$ sufficiently close to $1$. Indeed, for $\varphi \in C^{\infty}_{c}(\mathbf{R}^{3})$, we compute (un-barred quantities denote the induced metric $dr^{2} + \rho(r)^{2} g_{\mathbf{S}^{2}}$ and barred quantities denote the Euclidean metric $dr^{2} + r^{2} g_{\mathbf{S}^{2}}$)
\begin{align*}
\int_{\mathbf{R}^{3}} |\nabla \varphi|^{2} d\mu & \geq  \int_{\mathbf{R}^{3}} \alpha^{2} |\bar\nabla \varphi|^{2}d\bar\mu \\
& \geq  \int_{\mathbf{R}^{3}} \frac{\alpha^{2}}{4r^{2}} \varphi^{2} d\bar\mu \\
& \geq  \int_{\mathbf{R}^{3}} \frac{\alpha^{2}}{4r^{2}} \varphi^{2} d\mu . 
\end{align*}
In the first and third lines we used $\alpha r \leq \rho(r) \leq r $, in the second we used a well-known\footnote{For any $1<p<n$ and $\varphi \in C^\infty_c(\mathbf{R}^n)$ it holds that $\int_{\mathbf{R}^n} \frac{u^p}{|x|^p} d\bar\mu \leq \left( \frac{p}{n-p} \right)^p \int_{\mathbf{R}^n} |\bar\nabla \varphi|^p d\bar\mu$. This can be deduced from the usual one-variable Hardy inequality \cite[Theorem 327]{HLP} via symmetrization (cf.\ \cite[Section 2]{CianchiFerone}). Alternatively, the $p=2$ case (all we need here) can be deduced via a simple integration by parts argument (cf.\ \cite[Section 5.8.4]{Evans}). } Hardy inequality. Finally, we note that 
\[
\Ric_{g}(\nu,\nu) \leq 2 \left(\frac{1-\alpha^{2}}{\alpha^{2}r^{2}}+\frac{1-\alpha}{\alpha} e^{-r^{2}} \right) \leq \frac{\alpha^{2}}{4r^{2}},
\]
where the second inequality holds for $\alpha$ sufficiently close to $1$. Hence, for this choice of $\alpha$, we find that $\mathbf{R}^{3}\to(\mathbf{R}^{4},g)$ is stable.

\subsection{Positive Ricci curvature and strictly positive scalar curvature} \label{sec:pos.norm.Ric}

The following example should be compared to the example of Schoen indicated in \cite[Appendix]{MeeksRosenberg}. We construct a complete two-sided stable minimal hypersurface $M^3 \to (X^{4},g)$ so that along $M$, $R_{g}\geq 1,\Ric_{g} \geq 1$, and the sectional curvature is uniformly bounded below.

\begin{remark}
We emphasize that $(X^4,g)$ will not be complete, but $M^3$ will have a tubular neighborhood of uniform diameter. We discuss this point further below. 
\end{remark}

\begin{remark}
The example below can be directly generalized to produce $M^{n-1}\to (X^{n},g)$ as above, for any dimension $n\geq 4$. 
\end{remark}

Fix $M_{0} = (S^{1}\times S^{2}) \# (S^{1}\times S^{2})$. Because $\dim M_{0} = 3 > 2$, the Schoen--Yau/Gromov--Lawson surgery construction \cite{SY:descent,GromovLawson} and the Kazdan--Warner trichotomy theorem \cite{KW:conf,KW:exist,KW:direct} (see e.g., \cite[Theorem 1.3]{KW:conf}) imply that there is a scalar flat metric $h_{0}$ on $M_0$. 

Write $(M,h)$ for the universal cover of $(M_{0},h_{0})$. Defining 
\[
\lambda_{1}(M,h) = \inf_{u\in C_{c}^{\infty}(M)\setminus\{0\}} \frac{\int_{M} |\nabla u|^{2}}{\int_{M} u^{2}},
\]
we claim that $\lambda_{1}(M,h) > 0$. This is a consequence of a result of Brooks \cite{Brooks} (see also \cite[\S 16]{KD}) which says that $\lambda_{1}(M,h) = 0$ if and only if $\pi_{1}(M_{0})$ is amenable. The definition of an amenable group can be found in \cite[\S 1]{Brooks} (among other places), but all that matters here is that $\pi_{1}(M_{0}) = F_{2}$, the free group on two generators, which is not amenable (see e.g., \cite[Proposition 4]{Tao:amenable}). 

We now fix $\eps = \frac{\lambda_{1}(M,h)}{6}$. 

For $\delta>0$ to be fixed below (depending only on $h$), define a family of metrics $h_t$ on $M_{0}$ by:
\[h_t = h_{0} + t^{2}(\Ric_{h_0}-2\eps h_{0}), \quad t\in (-\delta,\delta).\]
For $\delta$ sufficiently small, $h_t$ is a Riemannian metric. Finally, we define a metric $g$ on $X=M_{0}\times (-\delta,\delta)$ by
\[g=h_t + dt^2.\]

We make a few observations. First, since $\partial_t h_t|_{t=0} = 0$, the embedding $M_0\times\{0\}\subset (X,g)$ is totally geodesic. Moreover, by the Riccati equation (cf. \cite[Corollary 3.3]{GrayTubes}), we have that
\begin{equation}\label{eq.riccati}
	2\eps h_0 -  \Ric_{h_{0}} = - \tfrac 12 \partial_t^2 h_{t}|_{t=0} = R_g(\cdot,\partial_t,\partial_t, \cdot).
\end{equation}
Let $\{e_i\}_{i=1}^4$ be a local orthonormal frame on $M$, where $e_4=\partial_t$. By the Gauss equations for $M_0 \times \{0\}$ and \eqref{eq.riccati}, we have for $i\in \{1,2,3\}$
\[\Ric_g(e_i,e_i ) = \Ric_{h_{0}}(e_i,e_i) + R_g(e_i,e_4,e_4,e_i) = 2\eps.\]
Furthermore, by \eqref{eq.riccati}, we have
\[\Ric_g(e_4,e_4) = \sum_{i=1}^3 R_g(e_i,e_4,e_4,e_i) = 6\eps - R_{h_{0}} = 6\eps.\]
In particular, $\Ric_g\geq 2\eps$ along $M_{0}\times \{0\}$. If we take $\delta>0$ even smaller, we find that $\Ric_g\geq \eps$ on $X$.

Now we verify that the immersion $M^3\to (X^4,g)$ from the universal cover is stable. Note that the potential term in the stability operator for $M$ satisfies $|A_M|^{2} + \Ric_{g}(\nu,\nu) = 6\eps = \lambda_{1}(M)$. Thus, for any $u\in C^{\infty}_{c}(M)$, we have
\[
\int_{M} |\nabla u|^{2} \geq \lambda_{1}(M,h) \int_{M} u^{2}  = \int_{M}(|A_{M}|^{2}+\Ric_{g}(\nu,\nu))u^{2}.
\]
This completes the proof (after scaling so that $\Ric_{g}\geq 1$). Note that because the image of $M$ is an embedded submanifold, the sectional curvature is uniformly bounded below along a tubular neighborhood of $M$. 

\begin{remark}
It is an interesting question as to whether or not one can find $M^{3}\to (X^{4},g)$ stable minimal where $(X^4,g)$ is a \emph{closed} (or complete) manifold with $\Ric_{g} \geq 1$ at all points. It is tempting to try to study the metric of positive Ricci curvature on $(S^{2}\times S^{2}) \# (S^{2} \times S^{2})$ constructed in \cite{ShaYang}. If the construction is carried out in the most symmetric way possible, there will be a totally geodesic submanifold diffeomorphic to $ M_{0}:=(S^{1}\times S^{2}) \# (S^{1}\times S^{2})$. If one could modify the construction so as to keep positivity of the curvature, but to ensure that $0\leq \Ric_{g}(\nu,\nu) \to 0$ along $M_{0}$, then the proof used above would show that the universal cover of $M_{0}$ was eventually stable. 
\end{remark}

\subsection{Non-negative sectional curvature and strictly positive scalar curvature in six dimensions} \label{sec:six-dim}

Choose $(\mathbf{R}^4,g)$ as in Appendix \ref{sec:pos.K.cx} and consider $(X^6,g_X) : = (\mathbf{R}^4,g)\times (S^2,g_{S^2})$. Then, $(X^6,g_X)$ will have non-negative sectional curvature, scalar curvature $R_{g_X}\geq 2$, and strictly positive Ricci curvature. On the other hand, if we cross the totally geodesic $\mathbf{R}^3\to(\mathbf{R}^4,g)$ by $S^2$, we find $M^5 := \mathbf{R}^3\times S^2 \to (X^6,g_X)$ two-sided minimal immersion. The immersion $M\to (X,g_X)$ will be stable. To see this, consider $B_\rho(0)\times S^2\subset M$ for $\rho>0$. The first eigenfunction of the stability operator on this set will be $S^2$-invariant, so the argument used in Appendix \ref{sec:pos.K.cx} (the Hardy inequality) implies that the first eigenvalue of the stability operator on this set is positive. Letting $\rho\to \infty$, we see that $M$ is stable. On the other hand $\Ric_{g_X}(\nu,\nu)>0$ along $M$. 

\section{Pulling back immersions along a local diffeomorphism} \label{app:pullback}

Suppose that $X,Y,M$ are smooth manifolds, $\Psi : Y\to X$ is a local diffeomorphism and $F : M\to X$ is an immersion. We describe here how to ``pull back\footnote{This is a pullback in the category theoretic sense (in the category of smooth maps/manifolds). Note that the pullback of two maps need not always exist in this category, but it does when the maps are transversal. } $F$ along $\Psi$.'' This construction is presumably well-known.  

Below, we will use the standard notation $f \pitchfork Z$ to mean that the map $f$ is transverse to the submanifold $Z$. 

Consider the map 
\[
F \times \Psi : M\times Y \to X\times X.
\]
Write $\Delta = \{(x,x) : x\in X\}$ for the diagonal in $X\times X$. 
\begin{lemma}
$F \times \Psi\pitchfork \Delta$. 
\end{lemma}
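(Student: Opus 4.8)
The statement is that $F \times \Psi \pitchfork \Delta$ as maps $M \times Y \to X \times X$, where $\Delta \subset X \times X$ is the diagonal. The plan is to verify transversality pointwise: fix $(q, y) \in M \times Y$ with $(F\times\Psi)(q,y) = (F(q), \Psi(y)) \in \Delta$, i.e.\ $F(q) = \Psi(y) =: x$. I must show that $d(F\times\Psi)_{(q,y)}(T_q M \times T_y Y) + T_{(x,x)}\Delta = T_{(x,x)}(X \times X)$. Recall $T_{(x,x)}\Delta = \{(v,v) : v \in T_x X\}$ and $T_{(x,x)}(X\times X) = T_x X \times T_x X$.

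The key observation is that $\Psi$ is a local diffeomorphism, so $d\Psi_y : T_y Y \to T_x X$ is an isomorphism; in particular it is surjective. Thus, given an arbitrary target vector $(a, b) \in T_x X \times T_x X$, I first choose $w \in T_y Y$ with $d\Psi_y(w) = b$. Then $d(F\times\Psi)_{(q,y)}(0, w) = (0, b)$. The remaining vector $(a,b) - (0,b) = (a, 0)$ need not be in the image of $dF$, but it \emph{is} of the form $(a,0) = (a - 0, 0)$, and I can write $(a,0) = (a,a) - (0,a)$... — more cleanly: the vector $(a,0)$ can be split as $(a,0) = (a,a) + (0,-a)$, where $(a,a) \in T_{(x,x)}\Delta$ and $(0,-a) = d(F\times\Psi)_{(q,y)}(0, d\Psi_y^{-1}(-a))$ lies in the image of the differential. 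Hence every $(a,b)$ is a sum of a vector in $\mathrm{image}\, d(F\times\Psi)_{(q,y)}$ (in fact coming only from the $T_y Y$ factor) and a vector in $T_{(x,x)}\Delta$, which is exactly transversality. Note that the immersion hypothesis on $F$ is not even needed here — surjectivity of $d\Psi_y$ alone suffices, since it already fills out the second factor, and the diagonal handles the first.

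There is essentially no obstacle: the only thing to be careful about is bookkeeping the tangent space identifications $T_{(q,y)}(M\times Y) \cong T_q M \oplus T_y Y$ and $T_{(x,x)}(X\times X) \cong T_x X \oplus T_x X$, and correctly describing $T_{(x,x)}\Delta$ as the graph of the identity. I would present the argument in the two or three lines above: pick $(q,y)$ in the preimage of $\Delta$, note $d\Psi_y$ is onto, and exhibit the decomposition $(a,b) = \big(d\Psi_y(w_1),\, d\Psi_y(w_2)\big) + \text{(diagonal vector)}$ for suitable $w_1, w_2 \in T_y Y$. This immediately gives that the pullback $M \times_X Y$ is a smooth manifold of dimension $\dim M + \dim Y - \dim X = \dim M$, which is what the subsequent construction in Appendix \ref{app:pullback} needs.
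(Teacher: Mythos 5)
Your proof is correct and is essentially the paper's argument: both reduce to the decomposition $(v_1,v_2) = (v_1,v_1) + (0,\,v_2 - v_1)$, with the first summand in $T_{(x,x)}\Delta$ and the second in the image of the differential because $d\Psi_y$ is surjective. Your remark that the immersion hypothesis on $F$ is not needed here is also accurate and consistent with the paper's proof, which only uses that the second factor of the image is all of $T_{\Psi(y)}X$.
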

\begin{proof}
We note that
\[
(F\times \Psi)^{-1}(\Delta) = \{(m,y) \in M\times Y : F(m)=\Psi(y)\}. 
\]
As such, for $(m,y) \in (F\times \Psi)^{-1}(\Delta)$, we find
\[
\Image d(F\times \Psi)_{(m,y)} =  \Image dF_{m} \times T_{\Psi(y)}X.
\]
Hence, for $(v_{1},v_{2}) \in T_{F(m)}X \times T_{\Psi(y)}X$, we can write
\[
(v_{1},v_{2}) = (v_{1},v_{1}) + (0,v_{2}-v_{1}) \in T_{(F(m),\Psi(y))}\Delta + \Image d(F\times \Psi)_{(m,y)}. 
\]
This completes the proof. 
\end{proof}
Thus $S : = (F\times \Psi)^{-1}(\Delta)$ is a submanifold of $M\times Y$. Recall that
\[
T_{s}S = (d(F\times \Psi)_{s})^{-1}(T_{(F(s),\Psi(s))}\Delta)
\]
and
\[
\dim M + \dim Y - \dim S = \codim(S\subset Y\times M) = \codim(\Delta\subset X\times X) = \dim X,
\]
so (because $\dim Y= \dim X$) we have
\[
\dim S = \dim M. 
\]
Write $F_{S} : S \to Y$ for the restriction of the projection map $M\times Y \to Y$ and similarly for $\Psi_{S} : S\to M$. In particular, the following diagram commutes
\[
\xymatrix{
S \ar[r]^{F_{S}} \ar[d]_{\Psi_{S}} & Y\ar[d]^{\Psi} \\
M \ar[r]_{F} & X
}
\]
We now check that the maps $ \Psi_{S},F_{S}$ have the desired properties. 
\begin{lemma}
The map $F_{S}:S\to Y$ is an immersion and the map $\Psi_{S} : S\to M$ is a local diffeomorphism. 
\end{lemma}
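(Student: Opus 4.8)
The statement to prove is the final lemma: that $F_S : S \to Y$ is an immersion and $\Psi_S : S \to M$ is a local diffeomorphism, where $S = (F \times \Psi)^{-1}(\Delta)$ and $F_S, \Psi_S$ are the restrictions of the two projections $M \times Y \to Y$ and $M \times Y \to M$. The whole argument is a linear-algebra computation on the tangent space $T_sS$, which we already have an explicit description of from the excerpt: for $s = (m,y) \in S$ (so $F(m) = \Psi(y)$), one has
\[
T_sS = \{(w_1, w_2) \in T_mM \times T_yY : dF_m(w_1) = d\Psi_y(w_2)\}.
\]
Since $\Psi$ is a local diffeomorphism, $d\Psi_y : T_yY \to T_{\Psi(y)}X$ is a linear isomorphism, so the condition $dF_m(w_1) = d\Psi_y(w_2)$ determines $w_2$ uniquely as $w_2 = (d\Psi_y)^{-1}(dF_m(w_1))$ for each $w_1 \in T_mM$. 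Hence the projection $d(\Psi_S)_s : T_sS \to T_mM$, $(w_1,w_2) \mapsto w_1$, is a linear isomorphism — injective because $w_1 = 0$ forces $d\Psi_y(w_2) = 0$ hence $w_2 = 0$, and surjective because any $w_1$ extends to the pair $(w_1, (d\Psi_y)^{-1}(dF_m(w_1))) \in T_sS$. This gives that $\Psi_S$ is a local diffeomorphism (a smooth map between equidimensional manifolds — recall $\dim S = \dim M$ was shown above — whose differential is everywhere an isomorphism).

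For $F_S$, I would observe that under the identification $d(\Psi_S)_s$ just established, $d(F_S)_s : T_sS \to T_yY$ sends $(w_1, w_2) \mapsto w_2 = (d\Psi_y)^{-1}(dF_m(w_1))$. This is the composition of the isomorphism $(d\Psi_y)^{-1}$ with $dF_m$, and $dF_m$ is injective because $F$ is an immersion; composing an injection with an isomorphism still gives an injection, so $d(F_S)_s$ is injective, i.e.\ $F_S$ is an immersion. Equivalently and more cleanly: $F_S = (d\Psi)^{-1} \circ F \circ \Psi_S$ at the level of maps in a suitable sense, but it is cleaner just to chase the differentials directly as above.

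Concretely, the write-up would be: fix $s = (m,y) \in S$; recall the formula for $T_sS$ from the discussion preceding the lemma; note $d\Psi_y$ is an isomorphism since $\Psi$ is a local diffeomorphism; deduce the projection to the first factor restricts to an isomorphism $T_sS \xrightarrow{\sim} T_mM$, giving the local-diffeomorphism claim for $\Psi_S$ (using $\dim S = \dim M$); then deduce the projection to the second factor restricts to an injection $T_sS \hookrightarrow T_yY$ by writing it as $dF_m$ precomposed with that isomorphism, giving the immersion claim for $F_S$.

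\textbf{Main obstacle.} There is essentially no obstacle — this is a routine verification. The only point requiring a little care is making sure the dimension count $\dim S = \dim M$ (established in the excerpt using $\dim Y = \dim X$) is invoked when upgrading "differential is an isomorphism at every point" to "local diffeomorphism," and similarly that the codomain dimensions match up so that "injective differential" is the correct notion of immersion for $F_S$. One should also be slightly careful that transversality (already verified) is what guarantees $S$ is a manifold with the stated tangent space in the first place, so the tangent-space formula is legitimate to use.
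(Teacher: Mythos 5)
Your proof is correct and follows essentially the same route as the paper: both arguments reduce to linear algebra on the tangent space $T_sS = (d(F\times\Psi)_s)^{-1}(T\Delta)$, using that $d\Psi_y$ is an isomorphism and $dF_m$ is injective. The only cosmetic difference is that the paper checks $\ker d(F_S)_s$ and $\ker d(\Psi_S)_s$ are trivial directly and then invokes $\dim S = \dim M$ for $\Psi_S$, whereas you exhibit $T_sS$ as the graph of $(d\Psi_y)^{-1}\circ dF_m$ over $T_mM$; these are the same computation.
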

\begin{proof}
We start with $F_{S}$. For $s \in S$, write $s=(m,y) \in Y\times M$. We have that $(dF_{S})_{s}$ is the restriction of the projection onto the second factor map $\pi_{2} : T_{m}M\times T_{y}Y\to T_{y}Y$. As such,
\[
\ker (dF_{S})_{s} = (T_{m}M \times 0) \cap T_{s}S. 
\]
Hence, if $(t,0) \in \ker (dF_{S})_{s}$ then 
\[
(t,0) \in (d(F\times \Psi)_{s})^{-1}(T_{(F(s),\Psi(s))}\Delta),
\]
i.e., $dF_{m}(t) = 0$. Because $F$ is an immersion, we have $t=0$. This proves that $F_{S}$ is an immersion. For $\Psi_{S}$, note that $\dim S = \dim M$, so it suffices to prove that $\Psi_{S}$ is an immersion. The proof is then identical to the one we just gave for $F_{S}$. 
\end{proof}

\section{Existence of local covering maps with good regularity}\label{app:bd.geo}

Recall the definition of $Q$-weakly bounded geometry from Definition \ref{defi:bound.geo}. The following result is well-known (cf.\ \cite{RST}, \cite[Lemma 2.1]{NaberTianII}, and \cite[Exercise 11.6.15]{Petersen:Riemannian}). We sketch the proof below, referring to \cite{RST,Petersen:Riemannian} for several crucial details.
\begin{proposition}\label{prop:bd.geo}
	If $(X^{n},g)$ is a complete manifold with  $|\textnormal{sec}| \leq K < \infty$, then there is $Q=Q(K)$ so that $(X^n,g)$ has $Q$-weakly bounded geometry. 
\end{proposition}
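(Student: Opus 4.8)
The plan is to build the required chart around a point $p\in X$ by composing the exponential map at $p$ with a harmonic coordinate chart for the pulled-back metric. First I would invoke Rauch comparison: since $\sec_g\le K$, the exponential map $\exp_p$ has no critical points on the Euclidean ball $B(0,r_0)\subset T_pX$ for any $r_0<\pi/\sqrt K$, hence is a local diffeomorphism there, and the pulled-back metric $\tilde g:=\exp_p^*g$ on $B(0,r_0)$ is uniformly equivalent to the flat metric $\delta$ with constants depending only on $K$ (comparing the metric in geodesic polar coordinates with the constant-curvature models for curvature $\pm K$). For $r_0=r_0(K)$ fixed this already gives property (1) of Definition~\ref{defi:bound.geo} on a chart of definite size; the real work is to upgrade this $C^0$ control of $\tilde g$ to the $C^{1,\alpha}$ control demanded by property (2).

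The second step is to observe that passing to the rough chart $\exp_p$ already removes the obstruction to a uniform injectivity-radius lower bound, even when $(X,g)$ is collapsing. Indeed, any $\tilde g$-geodesic issuing from the origin is a Euclidean line segment, since $\exp_p$ intertwines it with a $g$-geodesic from $p$; hence $0$ admits no geodesic loops and $\inj_{\tilde g}(0)\ge r_0$. As $|\sec_{\tilde g}|\le K$ pointwise, the conjugate radius at every point of $B(0,r_0)$ is $\ge\pi/\sqrt K$, and since $B(0,r_0)$ is contractible, a Klingenberg-type argument (short closed geodesics under $\sec\le K$ cannot be null-homotopic) propagates the bound to a neighborhood, giving $\inj_{\tilde g}\ge c(K)$ on $B(0,c(K))$. (The flat-torus case, where $\tilde g$ is literally the flat metric on $B(0,r_0)$ regardless of ambient collapsing, is a good model for this ``unwrapping.'')

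Third, I would feed $(B(0,c(K)),\tilde g)$ into the standard harmonic-coordinate existence theorem (Jost--Karcher / Anderson; cf.\ \cite{RST} and \cite[Ch.~11]{Petersen:Riemannian}): a two-sided sectional curvature bound together with a definite injectivity-radius lower bound yields a harmonic coordinate chart $\psi$ on $B_{\tilde g}(0,r_1)$, $r_1=r_1(n,K)$, in which $e^{-2Q}\delta\le(\psi^{-1})^*\tilde g\le e^{2Q}\delta$ and $\|\partial_k(\psi^{-1})^*\tilde g_{ij}\|_{C^\alpha}\le Q$ with $Q=Q(n,K,\alpha)$; moreover $\psi$ (being harmonic with respect to a $C^{1,\alpha}$ metric) and its inverse are $C^{2,\alpha}$. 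Then $\Phi:=\exp_p\circ\psi^{-1}$, defined on $B(0,r_1)\subset\R^n$ and precomposed with a linear rescaling so that $r_1\ge Q^{-1}$ (absorbing the scaling into $Q$), is a $C^{2,\alpha}$ local diffeomorphism onto a neighborhood of $p$ with exactly the properties in Definition~\ref{defi:bound.geo}, all constants depending only on $n$ and $K$.

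The main obstacle is the second step: the harmonic-radius estimates are scale-invariant and genuinely require a lower bound on the injectivity radius, not merely the absence of conjugate points, so one must justify that $\tilde g$ has such a bound near the origin despite possible ambient collapsing. The point is that the exponential chart unwraps short geodesic loops at the center; making the Klingenberg-type propagation of this bound to a neighborhood (rather than only at $0$) precise is the one delicate point, and is among the details this proposition defers to \cite{RST,Petersen:Riemannian}.
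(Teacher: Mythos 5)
Your proposal follows essentially the same route as the paper's proof: pull back by $\exp_p$ to get a metric on a Euclidean ball of definite size that is uniformly equivalent to $\delta$, establish a uniform injectivity-radius lower bound for the pulled-back metric (the paper defers this to \cite[Lemma 2.2]{RST}, whose content is exactly the unwrapping-plus-Klingenberg argument you sketch), and then invoke the standard harmonic-coordinate existence theorem of Jost--Karcher/Anderson to get the $C^{1,\alpha}$ control of the metric coefficients. The argument is correct, and you correctly identify the propagation of the injectivity-radius bound away from the center as the one delicate point that both you and the paper leave to the references.
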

\begin{proof}
	Fix $x \in X$ and choose an orthonormal basis for $T_{x}X$. We will identify $T_{x}X$ with $\R^{n}$ (so $g_{x}$ agrees with the standard inner product on $\R^{n}$). Using Jacobi field estimates (and $|\textnormal{sec}| \leq K < \infty$), there is $r_{0}=r_{0}(K,n)>0$ so that 
	\[
	\exp_{x} : B(0,4r_{0}) \subset \R^{n} \to X
	\]
	is a local diffeomorphism, and $\tilde g : = \exp_{x}^{*}g$ satisfies $\tfrac 12 \delta \leq \tilde g\leq 2\delta$ on $B(0,4r_{0})\subset \R^{n}$ as quadratic forms. By \cite[Lemma 2.2]{RST} it holds that $\inj_{\tilde g}(v) \geq i_0=i_0(K,n)>0$ for $v\in B(0,r_0)$. The assertion then follows by constructing harmonic coordinates for $\tilde g$ in a uniformly big neighborhood of $0$ as in \cite[Theorem 11.4.3]{Petersen:Riemannian} (see also \cite{Anderson} and \cite[Theorem 2.1]{RST})
\end{proof}

\bibliographystyle{amsalpha}
\bibliography{stable_min_dim_4}

\providecommand{\bysame}{\leavevmode\hbox to3em{\hrulefill}\thinspace}
\providecommand{\MR}{\relax\ifhmode\unskip\space\fi MR }
\providecommand{\MRhref}[2]{%
  \href{http://www.ams.org/mathscinet-getitem?mr=#1}{#2}
}
\providecommand{\href}[2]{#2}
\begin{thebibliography}{CKM17}

\bibitem[And90]{Anderson}
Michael~T. Anderson, \emph{Convergence and rigidity of manifolds under {R}icci
  curvature bounds}, Invent. Math. \textbf{102} (1990), no.~2, 429--445.
  \MR{1074481}

\bibitem[AR89a]{AR}
Michael~T. Anderson and Lucio Rodr\'{\i}guez, \emph{Minimal surfaces and
  {$3$}-manifolds of nonnegative {R}icci curvature}, Math. Ann. \textbf{284}
  (1989), no.~3, 461--475. \MR{1001714}

\bibitem[AR89b]{AndersonRodriguez}
\bysame, \emph{Minimal surfaces and {$3$}-manifolds of nonnegative {R}icci
  curvature}, Math. Ann. \textbf{284} (1989), no.~3, 461--475. \MR{1001714}

\bibitem[Bro81]{Brooks}
Robert Brooks, \emph{The fundamental group and the spectrum of the
  {L}aplacian}, Comment. Math. Helv. \textbf{56} (1981), no.~4, 581--598.
  \MR{656213}

\bibitem[BT73]{BT}
Ju.~D. Burago and V.~A. Toponogov, \emph{Three-dimensional {R}iemannian spaces
  with curvature bounded from above}, Mat. Zametki \textbf{13} (1973),
  881--887. \MR{346699}

\bibitem[Car17]{Carlotto:bd.morse}
Alessandro Carlotto, \emph{Generic finiteness of minimal surfaces with bounded
  {M}orse index}, Ann. Sc. Norm. Super. Pisa Cl. Sci. (5) \textbf{17} (2017),
  no.~3, 1153--1171. \MR{3726838}

\bibitem[CCE16]{CCE}
Alessandro Carlotto, Otis Chodosh, and Michael Eichmair, \emph{Effective
  versions of the positive mass theorem}, Invent. Math. \textbf{206} (2016),
  no.~3, 975--1016. \MR{3573977}

\bibitem[CEM19]{CEM}
Otis Chodosh, Michael Eichmair, and Vlad Moraru, \emph{A splitting theorem for
  scalar curvature}, Comm. Pure Appl. Math. \textbf{72} (2019), no.~6,
  1231--1242. \MR{3948556}

\bibitem[CF08]{CianchiFerone}
Andrea Cianchi and Adele Ferone, \emph{Hardy inequalities with non-standard
  remainder terms}, Ann. Inst. H. Poincar\'{e} C Anal. Non Lin\'{e}aire
  \textbf{25} (2008), no.~5, 889--906. \MR{2457816}

\bibitem[CKM17]{CKM}
Otis Chodosh, Daniel Ketover, and Davi Maximo, \emph{Minimal hypersurfaces with
  bounded index}, Invent. Math. \textbf{209} (2017), no.~3, 617--664.
  \MR{3681392}

\bibitem[CL20]{CL:aspherical}
Otis Chodosh and Chao Li, \emph{Generalized soap bubbles and the topology of
  manifolds with positive scalar curvature},
  \url{https://arxiv.org/abs/2008.11888} (2020).

\bibitem[CL21]{CLstable}
Otis Chodosh and Chao Li, \emph{Stable minimal hypersurfaces in
  $\mathbf{R}^4$}, \url{https://arxiv.org/abs/2108.11462} (2021).

\bibitem[CL23]{ChodoshLi:aniso}
Otis Chodosh and Chao Li, \emph{Stable anisotropic minimal hypersurfaces in
  {${\bf R}^4$}}, Forum Math. Pi \textbf{11} (2023), Paper No. e3, 22.
  \MR{4546104}

\bibitem[CLL21]{CLL:suff.conn}
Otis Chodosh, Chao Li, and Yevgeny Liokumovich, \emph{Classifying sufficiently
  connected psc manifolds in $4$ and $5$ dimensions}, to appear in Geom.\ and
  Topol., \url{https://arxiv.org/abs/2105.07306} (2021).

\bibitem[CLS22]{CLS:volumePSC}
Otis Chodosh, Chao Li, and Douglas Stryker, \emph{Volume growth of 3-manifolds
  with scalar curvature lower bounds}, \url{https://arxiv.org/abs/2207.13806}
  (2022).

\bibitem[CM11]{CM:book}
Tobias~Holck Colding and William~P. Minicozzi, II, \emph{A course in minimal
  surfaces}, Graduate Studies in Mathematics, vol. 121, American Mathematical
  Society, Providence, RI, 2011. \MR{2780140}

\bibitem[CMR22]{CatinoMastroliaRoncoroni}
Giovanni Catino, Paolo Mastrolia, and Alberto Roncoroni, \emph{Two rigidity
  results for stable minimal hypersurfaces}, 2022.

\bibitem[Coo10]{Cooper}
Andrew~A. Cooper, \emph{A compactness theorem for the second fundamental form},
  preprint, \url{http://arxiv.org/abs/1006.5697} (2010).

\bibitem[CSZ97]{CaoShenZhu}
Huai-Dong Cao, Ying Shen, and Shunhui Zhu, \emph{The structure of stable
  minimal hypersurfaces in {${\bf R}^{n+1}$}}, Math. Res. Lett. \textbf{4}
  (1997), no.~5, 637--644. \MR{1484695}

\bibitem[dCP79]{docarmo-peng}
M.~do~Carmo and C.~K. Peng, \emph{Stable complete minimal surfaces in {${\bf
  R}^{3}$} are planes}, Bull. Amer. Math. Soc. (N.S.) \textbf{1} (1979), no.~6,
  903--906. \MR{546314 (80j:53012)}

\bibitem[DK18]{KD}
Cornelia Dru\c{t}u and Michael Kapovich, \emph{Geometric group theory},
  American Mathematical Society Colloquium Publications, vol.~63, American
  Mathematical Society, Providence, RI, 2018, With an appendix by Bogdan Nica.
  \MR{3753580}

\bibitem[Eva10]{Evans}
Lawrence~C. Evans, \emph{Partial differential equations}, second ed., Graduate
  Studies in Mathematics, vol.~19, American Mathematical Society, Providence,
  RI, 2010. \MR{2597943}

\bibitem[FC85]{FC}
D.~Fischer-Colbrie, \emph{On complete minimal surfaces with finite {M}orse
  index in three-manifolds}, Invent. Math. \textbf{82} (1985), no.~1, 121--132.
  \MR{808112}

\bibitem[FCS80]{fischer-colbrie-schoen}
Doris Fischer-Colbrie and Richard Schoen, \emph{The structure of complete
  stable minimal surfaces in {$3$}-manifolds of nonnegative scalar curvature},
  Comm. Pure Appl. Math. \textbf{33} (1980), no.~2, 199--211. \MR{562550
  (81i:53044)}

\bibitem[GL80]{GromovLawson}
Mikhael Gromov and H.~Blaine Lawson, Jr., \emph{Spin and scalar curvature in
  the presence of a fundamental group. {I}}, Ann. of Math. (2) \textbf{111}
  (1980), no.~2, 209--230. \MR{569070}

\bibitem[GL83]{GL:complete}
\bysame, \emph{Positive scalar curvature and the {D}irac operator on complete
  {R}iemannian manifolds}, Inst. Hautes \'{E}tudes Sci. Publ. Math. (1983),
  no.~58, 83--196 (1984). \MR{720933}

\bibitem[Gra04]{GrayTubes}
Alfred Gray, \emph{Tubes}, second ed., Progress in Mathematics, vol. 221,
  Birkh\"{a}user Verlag, Basel, 2004, With a preface by Vicente Miquel.
  \MR{2024928}

\bibitem[Gro96]{gromov1996positive}
Misha Gromov, \emph{Positive curvature, macroscopic dimension, spectral gaps
  and higher signatures}, Functional analysis on the eve of the 21st century,
  {V}ol. {II} ({N}ew {B}runswick, {NJ}, 1993), Progr. Math., vol. 132,
  Birkh\"{a}user Boston, Boston, MA, 1996, pp.~1--213. \MR{1389019}

\bibitem[Gro18]{Gromov:metric-inequalities}
\bysame, \emph{Metric inequalities with scalar curvature}, Geom. Funct. Anal.
  \textbf{28} (2018), no.~3, 645--726. \MR{3816521}

\bibitem[Gro20]{Gromov2020metrics}
Misha Gromov, \emph{No metrics with positive scalar curvatures on aspherical
  5-manifolds}, \url{https://arxiv.org/abs/2009.05332} (2020).

\bibitem[HLP88]{HLP}
G.~H. Hardy, J.~E. Littlewood, and G.~P\'{o}lya, \emph{Inequalities}, Cambridge
  Mathematical Library, Cambridge University Press, Cambridge, 1988, Reprint of
  the 1952 edition. \MR{944909}

\bibitem[KW75a]{KW:direct}
Jerry~L. Kazdan and F.~W. Warner, \emph{A direct approach to the determination
  of {G}aussian and scalar curvature functions}, Invent. Math. \textbf{28}
  (1975), 227--230. \MR{375154}

\bibitem[KW75b]{KW:exist}
\bysame, \emph{Existence and conformal deformation of metrics with prescribed
  {G}aussian and scalar curvatures}, Ann. of Math. (2) \textbf{101} (1975),
  317--331. \MR{375153}

\bibitem[KW75c]{KW:conf}
\bysame, \emph{Scalar curvature and conformal deformation of {R}iemannian
  structure}, J. Differential Geometry \textbf{10} (1975), 113--134.
  \MR{365409}

\bibitem[Li04]{Li:harmonic.lectures}
Peter Li, \emph{Lectures on harmonic functions},
  \url{http://citeseerx.ist.psu.edu/viewdoc/download?doi=10.1.1.77.1052&rep=rep1&type=pdf}
  (2004).

\bibitem[Liu13]{Liu}
Gang Liu, \emph{3-manifolds with nonnegative {R}icci curvature}, Invent. Math.
  \textbf{193} (2013), no.~2, 367--375. \MR{3090181}

\bibitem[LM20]{LiokumovichMaximo2020waist}
Yevgeny Liokumovich and Davi Maximo, \emph{Waist inequality for 3-manifolds
  with positive scalar curvature}, to appear in Perspectives on Scalar
  Curvature, World Scientific Publishing Company,
  \url{https://arxiv.org/abs/2012.12478} (2020).

\bibitem[LW04]{LiWang:ends.stable.minimal}
Peter Li and Jiaping Wang, \emph{Stable minimal hypersurfaces in a
  nonnegatively curved manifold}, J. Reine Angew. Math. \textbf{566} (2004),
  215--230. \MR{2039328}

\bibitem[LZ16]{LiZhou}
Haozhao Li and Xin Zhou, \emph{Existence of minimal surfaces of arbitrarily
  large {M}orse index}, Calc. Var. Partial Differential Equations \textbf{55}
  (2016), no.~3, Art. 64, 12. \MR{3509038}

\bibitem[MN12]{MarquesNeves:width-psc}
Fernando~C. Marques and Andr\'{e} Neves, \emph{Rigidity of min-max minimal
  spheres in three-manifolds}, Duke Math. J. \textbf{161} (2012), no.~14,
  2725--2752. \MR{2993139}

\bibitem[Mou]{Mouille:web}
Lawrence Mouill\'e, \emph{Intermediate {Ricci} curvature,
  \url{https://sites.google.com/site/lgmouille/research/intermediate-ricci-curvature}}.

\bibitem[Mou21]{Mouille:torus}
\bysame, \emph{Torus actions on manifolds with positive intermediate {Ricci}
  curvature}, to appear in J. Lond. Math. Soc.,
  \url{https://arxiv.org/abs/2101.05826} (2021).

\bibitem[MR06]{MeeksRosenberg}
William~H. Meeks, III and Harold Rosenberg, \emph{The minimal lamination
  closure theorem}, Duke Math. J. \textbf{133} (2006), no.~3, 467--497.
  \MR{2228460}

\bibitem[Mue87]{Muter}
M.\ Mueter, \emph{Kr\"ummungserh\"ohende deformationen mittels
  gruppenaktionen}, Ph.D. thesis, University of M\"unster, 1987.

\bibitem[MW21]{MW:1}
Ovidiu Munteanu and Jiaping Wang, \emph{Comparison theorems for
  three-dimensional manifolds with scalar curvature bound}, to appear in Int.\
  Math.\ Res.\ Not., \url{https://arxiv.org/abs/2105.12103} (2021).

\bibitem[MW22]{MW:2}
\bysame, \emph{Comparison theorems for {3D} manifolds with scalar curvature
  bound, {II}}, \url{https://arxiv.org/abs/2201.05595} (2022).

\bibitem[NT18]{NaberTianII}
Aaron Naber and Gang Tian, \emph{Geometric structures of collapsing
  {R}iemannian manifolds {II}}, J. Reine Angew. Math. \textbf{744} (2018),
  103--132. \MR{3871441}

\bibitem[Pet16]{Petersen:Riemannian}
Peter Petersen, \emph{Riemannian geometry}, third ed., Graduate Texts in
  Mathematics, vol. 171, Springer, Cham, 2016. \MR{3469435}

\bibitem[Pog81]{pogorelov}
Aleksei~V. Pogorelov, \emph{On the stability of minimal surfaces}, Soviet Math.
  Dokl. \textbf{24} (1981), 274--276.

\bibitem[RST10]{RST}
Harold Rosenberg, Rabah Souam, and Eric Toubiana, \emph{General curvature
  estimates for stable {$H$}-surfaces in 3-manifolds and applications}, J.
  Differential Geom. \textbf{84} (2010), no.~3, 623--648. \MR{2669367}

\bibitem[Sch83]{Schoen:estimates}
Richard Schoen, \emph{Estimates for stable minimal surfaces in
  three-dimensional manifolds}, Seminar on minimal submanifolds, Ann. of Math.
  Stud., vol. 103, Princeton Univ. Press, Princeton, NJ, 1983, pp.~111--126.
  \MR{795231}

\bibitem[Sim68]{Simons}
James Simons, \emph{Minimal varieties in riemannian manifolds}, Ann. of Math.
  (2) \textbf{88} (1968), 62--105. \MR{233295}

\bibitem[Son18]{Song:embed}
Antoine Song, \emph{Embeddedness of least area minimal hypersurfaces}, J.
  Differential Geom. \textbf{110} (2018), no.~2, 345--377. \MR{3861813}

\bibitem[SS01]{ShenSormani}
Zhongmin Shen and Christina Sormani, \emph{The codimension one homology of a
  complete manifold with nonnegative {R}icci curvature}, Amer. J. Math.
  \textbf{123} (2001), no.~3, 515--524. \MR{1833151}

\bibitem[SSY75]{SSY}
R.~Schoen, L.~Simon, and S.~T. Yau, \emph{Curvature estimates for minimal
  hypersurfaces}, Acta Math. \textbf{134} (1975), no.~3-4, 275--288.
  \MR{423263}

\bibitem[SY76]{SY:harmonic.stable.minimal}
Richard Schoen and Shing~Tung Yau, \emph{Harmonic maps and the topology of
  stable hypersurfaces and manifolds with non-negative {R}icci curvature},
  Comment. Math. Helv. \textbf{51} (1976), no.~3, 333--341. \MR{438388}

\bibitem[SY79a]{SY:3d-torus}
Richard Schoen and Shing-Tung Yau, \emph{Existence of incompressible minimal
  surfaces and the topology of three-dimensional manifolds with nonnegative
  scalar curvature}, Ann. of Math. (2) \textbf{110} (1979), no.~1, 127--142.
  \MR{541332}

\bibitem[SY79b]{SY:descent}
\bysame, \emph{On the structure of manifolds with positive scalar curvature},
  Manuscripta Math. \textbf{28} (1979), no.~1-3, 159--183. \MR{535700}

\bibitem[SY82]{SY:ric}
\bysame, \emph{Complete three-dimensional manifolds with positive {R}icci
  curvature and scalar curvature}, Seminar on {D}ifferential {G}eometry, Ann.
  of Math. Stud., vol. 102, Princeton Univ. Press, Princeton, N.J., 1982,
  pp.~209--228. \MR{645740}

\bibitem[SY83]{SY:condensation}
Richard Schoen and S.~T. Yau, \emph{The existence of a black hole due to
  condensation of matter}, Comm. Math. Phys. \textbf{90} (1983), no.~4,
  575--579. \MR{719436}

\bibitem[SY91]{ShaYang}
Ji-Ping Sha and DaGang Yang, \emph{Positive {R}icci curvature on the connected
  sums of {$S^n\times S^m$}}, J. Differential Geom. \textbf{33} (1991), no.~1,
  127--137. \MR{1085137}

\bibitem[SY96]{ShenYe}
Ying Shen and Rugang Ye, \emph{On stable minimal surfaces in manifolds of
  positive bi-{R}icci curvatures}, Duke Math. J. \textbf{85} (1996), no.~1,
  109--116. \MR{1412440}

\bibitem[Tao09]{Tao:amenable}
Terence Tao, \emph{Some notes on amenability},
  \url{https://terrytao.wordpress.com/2009/04/14/some-notes-on-amenability/}
  (2009).

\bibitem[Wan19]{Wang:whitehead}
Jian Wang, \emph{Contractible $3$-manifold and positive scalar curvature
  {(I)}}, \url{https://arxiv.org/abs/1901.04605} (2019).

\bibitem[Whi16]{White:notes}
Brian White, \emph{Introduction to minimal surface theory}, Geometric analysis,
  IAS/Park City Math. Ser., vol.~22, Amer. Math. Soc., Providence, RI, 2016,
  pp.~387--438. \MR{3524221}

\bibitem[Xu20]{Xu:scal.int}
Guoyi Xu, \emph{Integral of scalar curvature on non-parabolic manifolds}, J.
  Geom. Anal. \textbf{30} (2020), no.~1, 901--909. \MR{4058542}

\bibitem[Yau76]{Yau:function.theoretic.complete}
Shing~Tung Yau, \emph{Some function-theoretic properties of complete
  {R}iemannian manifold and their applications to geometry}, Indiana Univ.
  Math. J. \textbf{25} (1976), no.~7, 659--670. \MR{417452}

\bibitem[Zhu22]{Zhu:psc.int}
Bo~Zhu, \emph{Geometry of positive scalar curvature on complete manifold},
  \url{https://arxiv.org/abs/2201.12668} (2022).

\end{thebibliography}

\end{document}